\documentclass[11pt]{amsart}

%packages
\usepackage{amsfonts,amsmath,amssymb,amscd,amsthm}
\usepackage{mathrsfs}
\usepackage{faktor}    
\usepackage[utf8]{inputenc}
\usepackage[english]{babel}
\usepackage{bbm}
\usepackage{pst-node}
\usepackage{enumitem}
\usepackage{tikz-cd}
\usepackage{tikz}
\usetikzlibrary{arrows, automata, positioning}
\usepackage{pdfpages}
\usepackage[left=37mm, top=20mm, right=37mm, bottom=25mm]{geometry} 
\usepackage{tabto}
\TabPositions{2 cm, 4 cm, 6 cm, 8 cm}
\usepackage[unicode, pdftex]{hyperref}
\hypersetup{
  colorlinks   = true, %Colours links instead of ugly boxes
  urlcolor     = blue, %Colour for external hyperlinks
  linkcolor    = blue, %Colour of internal links
  citecolor   = red %Colour of citations
}

%Linear algebra and characters

\DeclareMathOperator{\spam}{span}

\DeclareMathOperator{\Ra}{Re}

\DeclareMathOperator{\tr}{tr}
\DeclareMathOperator{\op}{op}

\DeclareMathOperator{\supp}{supp}

\newcommand{\Ch}[1]{\mathrm{Ch}\left(#1\right)}
\newcommand{\Tr}[1]{\mathrm{Tr}\left(#1\right)}
%Special groups

\DeclareMathOperator{\Sym}{Sym}

\DeclareMathOperator{\SL}{SL}

%Ultraproduct stuff
\newcommand{\ultprod}[1]{\prod_{#1 \to \omega}}
\newcommand{\ultlim}[1]{\lim_{#1 \to \omega}}

% shortcuts

\newcommand{\T}{{\mathbb{T}}}
\newcommand{\R}{{\mathbb{R}}}
\newcommand{\F}{{\mathbb{F}}}

\newcommand{\N}{{\mathbb{N}}}
\newcommand{\Z}{{\mathbb{Z}}}
\newcommand{\C}{{\mathbb{C}}}
\newcommand{\cH}{{\mathcal{H}}}

\newcommand{\cQ}{{\mathcal{Q}}}
\newcommand{\cP}{{\mathcal{P}}}

\newcommand{\cN}{{\mathcal{N}}}

\newcommand{\cI}{{\mathcal{I}}}

\newcommand{\fc}{\mathfrak{c}}
\newcommand{\bM}{{\mathbf{M}}}
\newcommand{\bE}{{\mathbf{E}}}
\newcommand{\bG}{{\mathbf{G}}}

\newcommand{\bH}{{\mathbf{H}}}

%Set up the theorem types

\newtheorem{theorem}{Theorem}[section]
\newtheorem{corollary}[theorem]{Corollary}
\newtheorem{lemma}[theorem]{Lemma}

\newtheorem{proposition}[theorem]{Proposition}

\newtheorem*{lemma*}{Lemma}
\newtheorem*{proposition*}{Proposition}
\newtheorem*{theorem*}{Theorem}
\newtheorem*{corollary*}{Corollary}
\newtheorem*{claim*}{Claim}

\theoremstyle{definition}
\newtheorem{definition}[theorem]{Definition}

\newtheorem*{definition*}{Definition}

\theoremstyle{remark}
\newtheorem{remark}[theorem]{Remark}

\theoremstyle{definition}

\newtheorem{question}[theorem]{Question}

%comments

\title[hyperlinearity, stability and spectral gap]{Hyperlinearity, stability and asymptotic spectral gap of higher rank lattices}
\author{Alon Dogon and Itamar Vigdorovich}
%\date{\today}

% classification if needed.
\thanks{\textbf{2020 Mathematics Subject Classification:} Primary: 22E40, 20H05, 22D25, 20E26, 43A35. Secondary: 22D55, 46L53, 20C25}

\begin{document}

\begin{abstract}
We prove that if the group $\SL_2(\mathbb Z[1/p])$ is flexibly Hilbert--Schmidt stable for some prime $p$, then it admits a non-hyperlinear finite central extension.
Consequently, a positive answer to the following question would yield an explicit example of a non-hyperlinear group: If two  representations of the modular group $\mathrm{SL}_2(\mathbb{Z})$ almost agree on an Iwahori subgroup $B$, % under a commensuration,
must they be close to representations that agree on $B$?
More generally, we investigate spectral gap properties for asymptotic representations of higher rank lattices and groups with property (T:FD).
In this setting, we prove that character rigidity is equivalent to a weak form of stability.

\end{abstract}

\maketitle
%\tableofcontents

\section{Introduction}   \label{sec:intro}

One of the most notorious open problems in group theory concerns the existence of non-sofic and non-hyperlinear groups.
The latter can be defined as follows:
Recall that the \emph{normalized Hilbert--Schmidt norm} of a matrix $x \in M_m(\C)$ is $\| x \|_2 = \| x\|_{2,m} :=  (\frac{1}{m} \tr(x^*x))^{1/2}$.

\begin{definition}\label{def:hyperlinear}
Let $\Gamma$ be a group. An \emph{asymptotic representation} is sequence of maps \( \pi_n\colon\Gamma\longrightarrow U(d_n) \) satisfying
$$\lim_n\bigl\|\pi_n(gh)-\pi_n(g)\pi_n(h)\bigr\|_{2}=0.$$
If in addition
\(
\lim_n \bigl\|\pi_n(g)-\pi_n(h)\bigr\|_{2}=\sqrt2
\)
whenever $g\neq h$, then the sequence is a \emph{hyperlinear approximation} of~$\Gamma$. A group admitting such an approximation is said to be \emph{hyperlinear}, or \emph{Connes embeddable}.
\end{definition}

%Asymptotic representations, especially in the Hilbert--Schmidt sense (see \cite{ThomICM} for an excellent survey).
Sofic groups arise when $U(n)$ is replaced by finite symmetric groups $\Sym (n)$ equipped with the normalized hamming metrics, and are in particular hyperlinear \cite{elek2005hyperlinearity}.
The subject posses rich history, we refer to \cite{pestov2008hyperlinear, ThomICM} for excellent surveys.  % begininig with the works of Connes \cite{Connes}, Gromov \cite{Gromov} and Weiss \cite{Weiss}.

Our main result provides new, particularly simple candidates for non-hyperlinear groups.
%Indeed, examples such as $\widetilde{\Sp_{2g}(\Z)}$ were shown to be not \emph{Frobenius-approximated} \cite{DGLT,BLSW}. \\
To formulate it, we need the notion of  Hilbert--Schmidt stability.
% , that is, whether asymptotic representations are small perturbations of \emph{genuine} representations \cite{HS_grp}, \cite{Dog_Thesis}.
% In our setting, the flexibility of comparing maps to unitary groups of \emph{different dimensions} is crucial.
Define the generalized Hilbert--Schmidt metric $d_2$ on the disjoint union $\bigsqcup_{m \in \N} U(m)$  as follows.
For $n\leq m$, $x \in U(n), y \in U(m)$, let
\begin{equation*}
     d_2(x,y) = \| x \oplus 0 _{m-n} - y \|_{2,m}
     \end{equation*}
Where $0_{m-n}$ is the $(m-n) \times (m-n)$-square null matrix\footnote{$d_2$ is indeed a metric, as can be verified similarly to \cite[Lemma A.1]{BC}}.
\begin{definition}
     A group $\Gamma$ is \emph{Hilbert--Schmidt stable} \cite{BL}  if for every asymptotic representation $\pi_n:\Gamma\to U(d_n)$, there exists a sequence $D_n \geq d_n$ and a sequence of honest representations $\rho_n:\Gamma\to U(D_n)$ such that $d_2(\pi_n,\rho_n)\to 0$.
If $D_n$ can be chosen to be $d_n$, we say $\Gamma$ is \emph{strictly Hilbert--Schmidt stable}\footnote{In the literature, the definition we gave for stability is usually referred to as \emph{flexible stability}
and stability refers to strict stability instead. We chose this terminology as with time, it has become clear that flexible stability is better suited for most applications.}. 
\end{definition}
%This term was coined in  \cite{BL} where it was shown that for infinite hyperlinear groups with property (T;FD), one must allow $D_n\geq d_n$ for stability to be possible. 

%   \begin{problem}\label{prob:stability}
%   Are higher rank lattices Hilbert--Schmidt stable?
%   \end{problem}
%   Currently, not one example of a higher rank lattice is known to have a positive or negative answer for this problem. Note that higher rank lattices are typically not  \emph{strictly} Hilbert--Schmidt stable, see \cite{BL}. \\k    

%, relating Hilbert--Schmidt stability and hyperlinearity in the general setting of central extensions of groups with property (T;FD). This can be thought of as a generalization of the main result of \cite{Dog}, which was limited to property (T) groups.
   Recall that $\Gamma$ has \emph{property (T;FD)} of Lubotzky-Zimmer \cite{LZ} if finite dimensional $\Gamma$-representations have uniform spectral gap. 

\begin{theorem}\label{thm:main-non-hyperlinear}
Let $\Gamma$ be a group with  property (T;FD). Assume $\Gamma$ has a central extension of the form 
\[
1\longrightarrow \Z\longrightarrow\widetilde\Gamma\longrightarrow\Gamma\longrightarrow1
\]
where $\widetilde{\Gamma}$ is a group with finite abelianization. If $\Gamma$  is Hilbert--Schmidt stable, then there exists $N \in \N$ such that the finite central extension:
\[
1\longrightarrow \Z/N  \longrightarrow\widetilde\Gamma / (N\cdot \Z)\longrightarrow\Gamma \longrightarrow1
\]
is not hyperlinear.
\end{theorem}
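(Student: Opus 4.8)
The plan is to argue by contradiction: suppose $\widetilde\Gamma$ is hyperlinear, and use this together with Hilbert--Schmidt stability of $\Gamma$ to manufacture a character of $\Gamma$ that violates character rigidity, contradicting Theorem~\ref{thm:main_char_rig_INTRO} (which applies since property (T;FD) holds). Concretely, a hyperlinear approximation of $\widetilde\Gamma$ is a sequence $\sigma_n\colon\widetilde\Gamma\to U(d_n)$ with $\|\sigma_n(gh)-\sigma_n(g)\sigma_n(h)\|_2\to 0$ and $\|\sigma_n(g)-\mathrm{id}\|_2\to\sqrt 2$ for every $g\neq e$; in particular, writing $z\in\Z$ for a generator of the central copy of $\Z$, the scalars $\tr(\sigma_n(z))/d_n$ have a subsequential limit $\lambda\in\overline{\mathbb D}$, and since $z$ has infinite order the faithfulness condition forces $|\lambda|<1$ (indeed $\|\sigma_n(z^k)-\mathrm{id}\|_2\to\sqrt 2$ for all $k\neq 0$ prevents $\lambda$ from being a root of unity, and a standard spectral-measure argument on the near-unitary $\sigma_n(z)$ rules out $|\lambda|=1$ unless the limiting spectral measure is a point mass at a root of unity).

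Next I would exploit that $\widetilde\Gamma$ has finite abelianization: the commutator subgroup $[\widetilde\Gamma,\widetilde\Gamma]$ has finite index, and $z^N\in[\widetilde\Gamma,\widetilde\Gamma]$ for some $N\geq 1$. This is the mechanism that kills the central character: any finite-dimensional representation of $\widetilde\Gamma$ sends $z$ to a matrix all of whose eigenvalues are $N$-th roots of unity (being unitary and mapping $z^N$ into the commutator subgroup, hence to an $\mathrm{SU}$-type matrix), so on finite-dimensional representations $z$ behaves like a torsion element. I would then restrict $\sigma_n$ to $\Gamma$ after twisting: the obstruction to $\sigma_n$ descending to $\Gamma$ is measured by the central character, and one forms instead the asymptotic representation of $\Gamma$ obtained by decomposing $\sigma_n$ over the (approximate) spectral projections of $\sigma_n(z)$. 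Because $\Gamma$ is Hilbert--Schmidt stable, this asymptotic representation of $\Gamma$ is close in $d_2$ to genuine representations $\rho_n\colon\Gamma\to U(D_n)$; pulling $\rho_n$ back to $\widetilde\Gamma$ gives honest finite-dimensional representations on which $z$ is torsion, and comparing with $\sigma_n$ yields that the limiting spectral distribution of $\sigma_n(z)$ is supported on $N$-th roots of unity — contradicting $|\lambda|<1$ and non-torsion behaviour established above, unless the limiting trace is in fact that of a finite-dimensional representation, which it is not since $\sigma_n$ was a hyperlinear (hence ``faithful'') approximation of the infinite group $\widetilde\Gamma$.

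More cleanly, I would package the middle of this argument through characters: a hyperlinear approximation of $\widetilde\Gamma$ produces, via subsequential pointwise limits of $g\mapsto \tr(\sigma_n(g))/d_n$ and an ergodic decomposition, a character $\varphi$ of $\widetilde\Gamma$ with $\varphi(g)=0$ for all $g\neq e$ (the ``regular'' character, coming from the $\sqrt2$-separation). Restricting and averaging appropriately over the central $\Z$, one extracts a character $\psi$ of $\Gamma$ which is a pointwise limit of normalized traces of finite-dimensional representations of $\Gamma$ only if one can lift it to $\widetilde\Gamma$ compatibly — but by Theorem~\ref{thm:main_char_rig_INTRO}, condition~(4), Hilbert--Schmidt stability of $\Gamma$ (which gives (1), hence (4)) forces every character of $\Gamma$ to be an fd-limit; lifting such limits to $\widetilde\Gamma$ and using finiteness of the abelianization shows the central generator $z$ must act with spectrum in the $N$-th roots of unity in the limit, which is incompatible with the regular character of the infinite group $\widetilde\Gamma$ that hyperlinearity would provide.

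\textbf{Main obstacle.} The delicate point is the bookkeeping of the central direction: one must show that an asymptotic representation $\sigma_n$ of $\widetilde\Gamma$ can be genuinely ``cut'' along the spectrum of the near-central unitary $\sigma_n(z)$ to yield an asymptotic representation of $\Gamma$ to which stability applies, and then transfer the conclusion back. This requires that $\sigma_n(z)$ is $d_2$-close to an \emph{exact} central element (so that its spectral projections are almost $\sigma_n$-invariant) — this is where property (T;FD), or rather its robust incarnation (T;FD)$_{\mathrm{rob}}$ guaranteed by Theorem~\ref{thm:main_char_rig_INTRO}, is essential: the uniform spectral gap forces the almost-invariant near-central vector/element to be close to a genuinely invariant one, making the spectral decomposition of $\sigma_n(z)$ robust enough to push through stability. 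Getting the quantitative comparison between the limiting spectral measure of $\sigma_n(z)$ and the constraint ``supported on $\mu_N$'' coming from finite abelianization — and ruling out the single point mass at $1$ that a trivial approximation would give, using the $\sqrt2$-separation of the hyperlinear approximation — is the crux.
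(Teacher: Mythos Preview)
Your proposal has a structural gap: you repeatedly invoke Theorem~\ref{thm:main_char_rig_INTRO}---to get character rigidity, or to upgrade (T;FD) to (T;FD)$_{\mathrm{rob}}$---but that theorem requires $\Gamma$ to be an irreducible lattice in a center-free higher-rank semisimple Lie group, whereas Theorem~\ref{thm:main-non-hyperlinear} is stated for an \emph{arbitrary} group with property (T;FD). The two theorems are logically independent in the paper; indeed Remark~\ref{rem:JM} exhibits groups with (T;FD) but not (T;FD)$_{\mathrm{rob}}$, so the upgrade you rely on is simply unavailable at this level of generality. Both your ``clean'' character-rigidity route and the robust-spectral-gap mechanism you flag as the main obstacle collapse for this reason.

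There is a second, more technical gap even if one restricts to lattices. When you cut $\sigma_n$ along spectral projections of the near-central $\sigma_n(z)$, the resulting pieces are not asymptotic representations of $\Gamma$ but \emph{asymptotically projective} ones, carrying a $2$-cocycle determined by the eigenvalue of $z$ on that piece. Hilbert--Schmidt stability says nothing directly about such maps, and ``pulling back'' a genuine representation of $\Gamma$ to $\widetilde\Gamma$ just kills $z$, so the comparison you sketch does not produce a constraint on the spectrum of $\sigma_n(z)$.

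The paper's argument addresses exactly this point, without ever touching Theorem~\ref{thm:main_char_rig_INTRO}. The key new input is a local-rigidity statement for asymptotically projective representations (Theorem~\ref{thm:NPS_analog}): under (T;FD) \emph{plus} Hilbert--Schmidt stability, any asymptotically projective representation $\epsilon$-close to an honest asymptotic representation must have cohomologically trivial cocycle. Hyperlinearity of $\widetilde\Gamma$, via Thom's lemma (Lemma~\ref{lem:Thom}) and the finite-abelianization hypothesis (Proposition~\ref{prop:coeff_change}), then yields a sequence $c_n\to 1$ in $Z^2(\Gamma,\T)$ with $[c_n]\neq 0$ and $L_{c_n}\Gamma$ Connes-embeddable; realizing these inside a matrix ultraproduct gives asymptotically projective representations with nontrivial cocycle arbitrarily close to an asymptotic representation, contradicting the rigidity (Theorem~\ref{thm:Ioana_analog}). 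Property (T;FD) enters only through the classical finite-dimensional local rigidity of Lemma~\ref{lem:T_FD_rigidity_rep}, bootstrapped to almost-representations via stability itself---no robust version, and no character rigidity, is needed.
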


   %The aim of this paper is to demonstrate that the three problems outlined above are connected.  Our results are especially meaningful for groups \emph{without} property~(T), and hold under the weaker hypothesis of
   %This theorem generalizes \cite[Theorem 1.3]{Dog}, which assumes property (T).
   We shall see that Theorem \ref{thm:main-non-hyperlinear} applies to higher rank lattices in semisimple groups (Theorem \ref{thm:main_S_arith}), even in absence of (T).
This in turn yields surprisingly simple examples-- \emph{amalgamated free products of virtually free groups over finite index subgroups}, which have (T:FD) but not (T), as we now discuss in detail.% which illustrates Theorem \ref{thm:main-non-hyperlinear}, in the case of $\Gamma = \SL_2(\Z[1/2])$.
   %Most higher rank lattices in semisimple groups, including all non-uniform lattices and ones with a property (T) factor, are known to have property (T;FD) \cite{LZ}. Similarly to the congruence subgroup property, property (T;FD) is believed to hold for all higher rank lattices.
   
\subsection*{A candidate for non-hyperlinearity} \label{sec:prologue}

%A central challenge in the study of hyperlinearity and stability is the limited understanding of almost representations. The group $\mathrm{SL}_2(\Z[1/p])$ offers a particularly tractable case, where our results take on a more explicit algebraic form. 

Let $\Lambda$ be a finitely generated group with generating set $S \subset \Lambda$. Let $X_{\mathrm{FD}}(\Lambda)$ denote the \emph{representation variety}\footnote{More precisely, it is  a countable union of real algebraic varieties.} of all of finite
dimensional unitary representations $\Lambda\to U(n)$, $n\in \N$.% We call  $X_{\mathrm{FD}}(\Lambda)$  the  of $\Lambda$. 
Given $\pi, \tilde \pi \in X_{\mathrm{FD}}(\Lambda)$ their distance is defined by
\[
       d_2(\pi,\tilde{\pi})= \max_{s\in S} d_2(\pi(s), \tilde\pi(s)),
\]
%where $d_2$ denotes the generalized Hilbert--Schmidt metric, see equation (\ref{eq:HS-metric}). Clearly, $d$ is a metric on $X_{\mathrm{FD}}(\Lambda)$.\footnote{The metric depends on $S$, as well as on the embeddings $U(n)\to U(m)$ given in (\ref{eq:HS-metric}), but the problem we formulate  does not.}

Consider the modular group $ \mathrm{SL}_{2}(\Z)$, and
%   and  the mod-$2$ quotient map
%   \[
%   p:\mathrm{SL}_{2}(\Z)\to\mathrm{SL}_{2}(\mathbb{F}_{2}).
%   \]
let ${B}_{+}$, ${B}_{-}$ be the \emph{upper and lower Iwahori subgroups} \cite[Section 1.2]{serre2002trees}:
$${B}_+ = \big\{  \left(\begin{array}{cc} 
a & b\\
2c & d
\end{array}\right) \big \}, \; B_-=\big\{  \left(\begin{array}{cc} 
a &2 b\\
c & d
\end{array}\right) \big \} \leq \SL_2(\Z). $$
Note that these are the pullbacks of the upper (resp. lower) triangular subgroups of $\SL_2(\mathbb F_2)$ under the congruence quotient map from $\SL_2(\Z)$.
Thus, they are of index $3$.
The   bijection
\[
\left(\begin{array}{cc}
a & b\\
2c & d
\end{array}\right)
\overset{\sigma}{\longleftrightarrow}
\left(\begin{array}{cc}
a & 2b\\
c & d
\end{array}\right)
\]
 is a group isomorphism  $\sigma:{B}_{+} \cong {B}_{-}$. It is implemented as conjugation by the diagonal matrix $t=\mathrm{diag}(\sqrt 2,1/\sqrt 2)$, which is in the commensurator, but not the normalizer of $\SL_2(\Z) \leq \SL_2(\R)$.
 The amalgamated free product $\mathrm{SL}_2(\Z )*_{ B}\mathrm{SL}_2(\Z )$ is then isomorphic to $\mathrm{SL}_2(\Z[1/2])$ \cite[\S II.1, Cor. 2]{serre2002trees}.

Consider the free product $\Lambda=\mathrm{SL}_2(\Z)*\mathrm{SL}_2(\Z)$, and the corresponding representation variety $X_{\mathrm{FD}}(\Lambda)$ with respect to some fixed finite generating set. 
For $\pi \in X_{\mathrm{FD}}(\Lambda)$ we can consider its restrictions $\pi_+,\pi_-$ to the first and second factors respectively.
Consider the following diagram:
\[\begin{tikzcd}
	{{B}_+} & {\mathrm{SL}_2 (\mathbb Z)} \\
	{} & {} & {U(d)} \\
	{{B}_-} & {\mathrm{SL}_2 (\mathbb Z)}
	\arrow[hook, from=1-1, to=1-2]
	\arrow["\sigma"{description},leftrightarrow, from=1-1, to=3-1]
	\arrow["{\pi_+}", from=1-2, to=2-3]
	\arrow[hook', from=3-1, to=3-2]
	\arrow["{\pi_-}"', from=3-2, to=2-3]
\end{tikzcd}\]
If this diagram commutes, we say that $\pi$
is \textit{compatible}. The space of  compatible representations is a subvariety of $X_{\mathrm{FD}}(\Lambda)$ identified with $X_{\mathrm{FD}}(\mathrm{SL}_2(\Z[1/2]))$.

Fix a finite generating set $S_{+}$ for ${B}_{+}$, which can be taken to be:
\[
\left\{ \left(\begin{array}{cc}
1 & 0\\
2 & 1
\end{array}\right),\left(\begin{array}{cc}
1 & 1\\
0 & 1
\end{array}\right),\left(\begin{array}{cc}
-1 & 0\\
0 & -1
\end{array}\right)\right\} \subset {B}_+ .
\]
%Let $S_- = \sigma(S_+)$.
The \emph{local defect} of $\pi$ is then measured by:
\[
\mathrm{def}(\pi)=\max_{s \in S_+} d_2(\pi_{+}(s),\pi_{-}(\sigma(s)))
\]
The \emph{global defect} of $\pi$ is measured by
\[
    \mathrm{D}(\pi)=\inf\{d_2(\pi,\tilde{\pi})\mid \tilde{\pi}\in X_{\mathrm{FD}}(\Lambda) \text{ is compatible}\}.
\]
It is not hard to see that $\mathrm{def}(\pi)\leq 2\mathrm{D}(\pi)$.  We pose the following problem:
\begin{question}
    \label{problem!}
Does there exist a  function $f:\R_+\to \R_+$ with $\lim_{x\to 0} f(x)=0$ such that for any  $\pi\in X_{\mathrm{FD}}(\Lambda)$ one has
\[
\mathrm{D}(\pi)\leq f(\mathrm{def}(\pi))
\]
that is, are representations of $\SL_2(\Z)$ which almost factor through a congruence quotient $SL_2(\mathbb Z/n)$, for some odd $n$, close to congruence representations?
\end{question}

The later formulation follows from the congruence subgroup property of $\Gamma = \SL_2(\Z[1/2])$: finite dimensional unitary representations of $\Gamma$ correspond to finite dimensional unitary representations of $\SL_2(\Z)$ which factor through an uneven congruence quotient.

It follows from Theorem \ref{thm:main-non-hyperlinear} that a positive answer to Question \ref{problem!} implies the existence of a non-hyperlinear group.
In fact:

 %  Problem \ref{problem!} is equivalent to the epimorphism $\Lambda\to \mathrm{SL}_2(\Z[1/p]) $ being stable as defined in  \cite{stability-of-amalgams}. Importantly, the group $\mathrm{SL}_2(\Z)$ is Hilbert-Schmidt stable \cite{MR4661092}. Thus, it is not hard to see that  Problem \ref{problem!} is equivalent to Hilbert-Schmidt stability of the group $\mathrm{SL}_2(\Z[1/p])$ (see e.g. \cite[Proposition 3]{stability-of-amalgams}). Moreover,   $\mathrm{SL}_2(\Z[1/p])$ is an $S$-arithmetic lattice in $\mathrm{SL}_2(\R)\times \mathrm{SL}_2(\Q_p)$. Thus, we may specialize Corollary \ref{thm:main_S_arith} to this case and obtain.

\begin{corollary}\label{cor:SL_2}
    Assume a positive answer to Question \ref{problem!}, then there is a finite central extension
    \[
1\longrightarrow \Z/N\longrightarrow\widetilde\Gamma\longrightarrow \SL_2(\Z[1/2])\longrightarrow1
\]
    which is not hyperlinear.
\end{corollary}

Notably, such a group $\tilde \Gamma$ is an amalgamated product of virtually free groups over a finite index subgroup.
By a theorem of Deligne \cite{Deligne} and Raghunathan \cite{Rag_torsion} regarding central extensions of $S$-arithmetic groups, $\widetilde{\Gamma}$ is not residually finite.
Combined with Margulis' normal subgroup theorem \cite{margulis1991discrete}, it is not residually amenable, making it an intriguing candidate for a non-hyperlinear group.
The same is true for $\SL_2(\Z[1/p])$, $p$ prime, and a wide class of $S$-arithmetic groups (see Theorem \ref{thm:main_S_arith}).
%(see also \cite[Thm. 3.3 and Rem. 3.5]{stover2024residual}  and \cite{Hill}).

%   \begin{remark}
%       \begin{enumerate}
%           \item By Theorem  \ref{thm:main_char_rig_INTRO}, a positive solution to Problem \ref{problem!} implies character-rigidity, but the latter is already known by \cite{PT}. 
%           \item Inheritance of Hilbert-Schmidt stability under amalgamated free products is studied in \cite{stability-of-amalgams}, but mainly when the amalgam is finite. 
%           \item Another closely related question is whether $\mathrm{SL}_2(\Z[1/p])$ satisfies Kirchberg's local lifting property, see \cite{ISW}.
%       \end{enumerate}
%   \end{remark}

%We apply  this result to the  class of non-uniform $S$-arithmetic groups arising from higher rank semisimple groups with infinite fundamental group, such as $\SU_{n,1}(\R) \times\SU_{n,1}(\R), n\geq 1$, $ \Sp_{2g}(\R), g \geq 2$ and $\SL_2(\R) \times \SL_2(\Q_p)$ (see \cite[Table 2]{stover2024residual} for a full list).
\

The high level strategy of the proof of Theorem \ref{thm:main-non-hyperlinear} is as follows: Assuming $\widetilde{\Gamma}$ is hyperlinear, we obtain \emph{asymptotically projective representations} (Definition \ref{def:asym_proj}) arising from the central extension -- thought of as ``deformations" of the regular representation of $\Gamma$.
The main technical core is contained in Sections \ref{sec:local_rigidity} and \ref{sec:non-hyperlinear}, which establish new local rigidity phenomenona for asymptotic representations, as well as asymptotically projective representations, under the assumption of Hilbert--Schmidt stability.
The deformation posited by hyperlinearity in conjunction with our new rigidity results then lead to a contradiction.
%Throughout our arguments, the theory of tracial von Neumann algebras (Section \ref{subsec:von-Neumann-alg}) is indispensable.

The general idea of using deformations arising from hyperlinear central extension in conjunction with property (T) to disprove stability was already used in \cite{ISW,Dog}.
However, proving Theorem \ref{thm:main-non-hyperlinear}  requires genuinely new techniques.
This is because in absence of property (T), one cannot apply the rigidity phenomena used in \cite{ISW} which involve infinite dimensional representations.
We overcome this difficulty by by deducing the relevant rigidity phenomena for asymptotically projective representations by upgrading property (T:FD) using Hilbert--Schmidt stability (Proposition \ref{prop:rigidity_almost_rep} and Theorem \ref{thm:NPS_analog}).
This introduces extra intricacies, in particular, a Connes type lemma  (Lemma \ref{lem:Connes_trick}) is used.
%   While results in the spirit of Theorem \ref{thm:main-non-hyperlinear} were obtained for property~(T) groups as in \cite{Dog}, proving Theorem \ref{thm:main-non-hyperlinear}  requires genuinely new techniques.
%   This is because in absence of property (T), one cannot apply the rigidity phenomena used in \cite{Dog} which apply to infinite dimensional representations. 

\subsection*{Hyperfinite stability, asymptotic spectral gap and character rigidity}

Beyond Theorem \ref{thm:main-non-hyperlinear}, we investigate asymptotic spectral gap properties of general higher rank lattices also in relation to character rigidity.
 Recall that a \emph{character} on a group $\Gamma$ is a positive-definite, conjugation-invariant function $\varphi:\Gamma \to \C$ with $\varphi(e)=1$ which cannot be decomposed as a convex combination of such functions (see \S\ref{subsec:chars}). 
 A group $\Gamma$ is said to be \emph{character rigid} if every character on $\Gamma$ is either induced by a finite dimensional representation, or vanishes off the center  $Z(\Gamma)$ \cite[Definition 1.1]{char-rig-nonuniform}.
 
 A well-known conjecture due to Connes (see  \cite{Jo00} and \cite{houdayer2021noncommutative}) states that every irreducible lattice $\Gamma$ in a higher rank semisimple group $G$ is character rigid.
This conjecture has been established in many cases, most notably when $G$  has a property~(T) factor \cite{bekka2006operator,peterson2015character, boutonnet2021stationary,BBHP,BBH,creutz2024character},
%Moreover, it is known for $\mathrm{SL}_2(\mathcal{O}_S)$ where $\mathcal{O}_S$ is a localization of a number ring with infinite unit group \cite{PT}.
or when $\Gamma$ is non-uniform \cite{char-rig-nonuniform, PT}. 
Despite such advances, the conjecture remains open.

% Of fundamental importance for our purposes is \emph{property~(T;FD)}, which posits a uniform spectral gap across all finite dimensional representations (see \cite{LZimm}).

Our second result gives several reformulations of character rigidity in the setting of higher rank lattices:
It is equivalent to a spectral gap property of asymptotic representations, termed \emph{robust property (T:FD)} (Definition \ref{def:robust}), which in turn is equivalent to an amenable  form of stability, termed \emph{hyperfinite Hilbert--Schmidt stability} (Definition \ref{def:hyp_HS_stab}).
Notably, it  follows that \emph{character rigidity is witnessed on a finitary level}.
We refer to Sections \ref{sec:hyp_HS_stab} and \ref{sec:rob} for precise definitions and for generalities regarding hyperfinite Hilbert--Schmidt stability and property \textnormal{(T;FD)}$_{\mathrm{rob}}$.

\begin{theorem}\label{thm:main_char_rig_INTRO}
Let  $G$ be a center-free connected semisimple Lie group without compact factors and with real rank $\ge 2$.  Let $\Gamma\leq G$ be an irreducible lattice  that has property~\textnormal{(T;FD)}. The following conditions are equivalent:
\begin{enumerate}
   \item\label{it:hfstab} $\Gamma$ is hyperfinitely Hilbert--Schmidt stable;%(Definition~\ref{def:amenable-stability});
   \item\label{it:char} $\Gamma$ is character rigid;
   \item\label{it:tfdrob} $\Gamma$ has property~\textnormal{(T;FD)}$_{\mathrm{rob}}$;% (Definition~\ref{def:TFDrob});
   \item\label{it:fdlimits} Characters of $\Gamma$ are pointwise limits of normalized traces of finite dimensional representations.
\end{enumerate}
\end{theorem}

In particular, if $\Gamma$ is Hilbert--Schmidt stable, then it is character rigid.

The proof of Theorem \ref{thm:main_char_rig_INTRO} crucially relies \emph{charmenability}, a strong property established in the work of \cite{BBHP}, which shows that characters of $\Gamma$ that do not vanish outside the center must be von Neumann amenable.
With that said, showing the equivalences requires several results of independent interest. This includes a non-commutative version of Schramm's theorem  \cite{Schramm}  on hyperfinite graph limits  (see Theorem \ref{thm:vN_Schramm}).
The equivalence $(1) \Leftrightarrow (4)$  follows from a general character-theoretic criterion for hyperfinite Hilbert--Schmidt stability (Theorem \ref{thm:HS_for_non_amenable_grps}), building on \cite{HS_grp}. 
For the remaining equivalences, we make use of charmenability:
Somewhat paradoxically, the core idea in our approach to $(1) \Leftrightarrow (2) \Leftrightarrow (3)$ is to leverage amenability in order to derive spectral gap, relying on Connes' theorem \cite{Connes} (see \S\ref{sec:char_rigidity}).
Along the way, we provide a general framework for showing that spectral gap passes to limits, generalizing \cite{levit2023spectral} (see \S\ref{sec:rob}).

In contrast to the situation of property (T:FD), we record a short proof due to U. Bader (Theorem \ref{thm:op_main}) that property (T) and its robust variant are in fact equivalent, generalizing \cite{Manuilov2007OnAR}.

\subsection*{Organization of the paper.}
Section~\ref{sec:preliminaries} collects preliminaries on von Neumann algebras and characters. 
Section~\ref{sec:local_rigidity} develops local rigidity for asymptotic representations via Hilbert--Schmidt stability.
Section~\ref{sec:non-hyperlinear} proves the main Theorem~\ref{thm:main-non-hyperlinear} and deduces results for $S$-arithmetic groups.  
In Section~\ref{sec:hyperfinite} we introduce hyperfinite stability and prove the non-commutative analogue of Schramm's theorem (Theorem~\ref{thm:vN_Schramm}).  In \ref{sec:hyp_HS_stab}, the notion hyperfinite-stability is introduced, along with its character-theoretic counterpart.
In \ref{sec:rob}, we study asymptotic spectral gap properties in abscense of stability.
The equivalences in Theorem~\ref{thm:main_char_rig_INTRO} are then proved in Section~\ref{sec:char_rigidity}.

\subsection*{Acknowledgments}
We deeply thank Alex Lubotzky, Uri Bader, for their continuous encouragement and advice regarding various parts of this work.
We are grateful Thomas Vidick for his immense support, and discussions on hyperfiniteness which led to Definition \ref{def:hyperfinite tuple}.
We sincerely thank David Gao and Adrian Ioana for providing parts of the proof of Theorem \ref{thm:vN_Schramm} and Proposition \ref{prop:hyp_stab_prod}.
We thank Francesco Fournier-Facio for helpful comments on the paper.
Finally, we thank Yuval Gorfine and Michael Glasner for their friendship and many advice regarding spectral gaps.

\subsection*{Funding}
AD was supported by a Clore Scholars grant and ERC grant no. 882751. IV was supported by  NSF postdoctoral fellowship grant DMS-2402368.

\subsection*{Declarations}
%   On behalf of all authors, the corresponding author states that there is no conflict of interest.
%   All data generated or analyzed during this study are included in this published paper.
No AI tools were used by the authors in the research or writing processes of this article.

\section{Preliminaries: Traces on groups and von Neumann algebras}
\label{sec:preliminaries}

\subsection{Tracial von Neumann algebras}\label{subsec:von-Neumann-alg}

We refer to the book \cite{Popa} for information on the theory of tracial von Neumann algebras. Here, we recall the bare necessities.
 A \emph{tracial von Neumann algebra} is a pair $(M,\tau)$ where $M$ is a von Neumann algebra, and $\tau:M\to \C$ is a normal and faithful tracial state (simply referred to as a \emph{trace}). Normality in this context refers to continuity in the $\sigma$-weak topology.
 $M$ is said to be \emph{finite} if it admits a normal and faithful trace.
 $M$ is said to be a \emph{factor} if the center of $M$ is one-dimensional, and is a II$_1$\emph{-factor} if $M$ is an infinite dimensional finite factor.  The trace on a finite factor is unique. The group of unitary elements in $M$ is denoted by $U(M)$.

The map $\langle x, y \rangle_\tau\mapsto \tau(y^*x)$ defines an inner product on $M$, which in turn induces a norm $\| x \|_{2,\tau} = \tau(x^*x)^{1/2}$ and as a result a metric $d_{2,\tau}(x,y)$.
When clear form the context, we will simply denote this norm by $\| \cdot \|_2$, and refer to it as the \emph{$2$-norm}.
The completion of $M$ with respect to this norm is a Hilbert space  denoted by $L^2(M) = L^{2}(M,\tau)$.
We have a canonical embedding $M\hookrightarrow L^2(M,\tau)$, and the anti-linear map $x \mapsto x^*$ extends to an anti-linear isometry $J: L^2(M, \tau) \to L^2(M,\tau)$, called the \emph{canonical conjugation}. 
Further, Left (and also right) multiplication in $M$ extends to bounded operators on $L^2(M)$, making it the \emph{standard $M$-bimodule}, and we have the formula $x \cdot \xi \cdot y^* = x\cdot (JyJ) \cdot\xi$ for all $x,y\in M, \xi \in L^2(M)$.

If $N\leq M$ is a von Neumann subalgebra (with $1_M\in N$), then $L^2(N,\tau\mid_N)$ is naturally embedded as a closed subspace in $L^2(M,\tau)$. As a result, there is an orthogonal projection map, which when restricted to $M$, is denoted by $\mathbb{E}_N:M\to N$. This map is called the \emph{conditional expectation} onto $N$.

    Even though the language of general tracial von Neumann algebras will be invaluable to our study, one of the main examples we will be working with is simply the finite dimensional matrix algebras $M_n(\C)$ with the normalized matrix trace.
    In this case, the $2$-norm is simply the normalized Hilbert--Schmidt norm mentioned in the introduction.

There are several ways to study approximations of von Neumann algebras by finite dimensional ones, see \cite{BO} for this beautiful subject.
For a  set $F\subset M$ and $\epsilon>0$, an $(F,\epsilon)$-\emph{hyperfinite approximation} of $(M,\tau)$ is a finite dimensional (unital) $*$-subalgebra $Q \leq M$ such that the conditional expectation $\mathbb{E}_Q:M\to Q$ satisfies $\| \mathbb E_Q(x) - x \|_2 \leq \epsilon$ for all $x \in F$. 
%$$\sup_{x,y\in F}\|\mathrm{E}_Q(xy)-\mathrm{E}_Q(x)\mathrm{E}_Q(y)\|<\epsilon.$$ 
$(M,\tau)$ is said to be \emph{hyperfinite} if it admits an $(F,\epsilon)$-hyperfinite approximation for any finite set $F\subset M$ and any $\epsilon>0$. 
Every finite dimensional von Neumann algebra is hyperfinite in an obvious sense. 
Remarkably, Murray and von Neumann proved there  exists a unique hyperfinite II$_1$ factor, denoted $\mathcal R$. Connes proved the fundamental fact that \emph{amenability} and hyperfiniteness are equivalent \cite{Connes}.

A more permitting approach for finite dimensional approximation is given by the following notion. $(M,\tau)$ is said to be \emph{Connes embeddable} if for any finite $F\subset M$ and $\epsilon>0$ there exists a finite dimensional tracial von Neumann algebra $Q$, and a linear, almost trace preserving $*$-map $\pi: M\to Q$ which is $(F,\epsilon)$-multiplicative, see \cite[Section 4]{Dog_Thesis} for a discussion regarding this notion. 
A group $\Gamma$ is hyperlinear if and only if its group von Neumann algebra $L\Gamma$ is Connes embeddable.  Only recently, non-Connes embeddable finite factors have been established \cite{MIP*=RE}. These finite factors, however, are not group von Neumann algebras.

A key tool in the study of von Neumann algebras in general, and of Connes embeddability in particular, is of \emph{tracial ultraproducts}. 
Throughout this paper, we will fix a non-principal ultrafilter $\omega$ on $\N$.
We will denote the ultralimit of a bounded sequence of scalars $x_n$ with respect to $\omega$ by $\ultlim{n} x_n$.
Let $(M_n, \tau_n)$ be a sequence of tracial von Neumann algebras (which for most of our purposes will be $M_n = M_{d_n}(\C)$, for some unbounded sequence $d_n \in \N$). 
The \emph{tracial ultraproduct} is the quotient $\ultprod{n} M_n = \left(\prod_{n \in \N} M_n \right) / \mathcal I_\omega$, where $\prod_{n \in \N} M_n = \{ (x_n)_n | ; \sup_n \| x_n \|_{\op} < \infty \}$ and  $\cI_\omega = \{ (x_n) | \ultlim{n} \| x_n \|_{2,\tau_n} = 0\}$.
Together with the natural ultralimit trace, denoted by $\tau_\omega$, the algebra $\prod_{n \to \omega} M_n$ is a tracial von Neumann algebra.
A  tracial von Neumann algebra $(M,\tau)$ is Connes embeddable if and only if it embeds in a tracial ultraproduct of the matrix algebras $M_n(\C), n \in \N$; equivalently, if it embeds into $\mathcal R^\omega=\prod_{n\to \omega}\mathcal{R}$. 
%There is also an intermediate finite dimensional approximation property, namely amenability. We omit the definition for the simple fact that, by Connes remarkable theorems, a finite von Neumann algebra is amenable if and only if it is hyperfinite \cite{Connes}. 

\subsection{Characters  and traces of groups}\label{subsec:chars}
Let $\Gamma$ be a (discrete) group. A tracial representation is a triple $(M,\tau,\pi)$ where $(M,\tau)$ is a tracial von Neumann algebra, and $\pi:\Gamma \to U(M)$ is homomorphism satisfying $\pi(\Gamma)''=M$. Two tracial representations are called \emph{quasi-equivalent}, if there is trace preserving isomorphism between the von Neumann algebras which intertwines the respective representations. It is easily verified that the $\varphi=\tau\circ \pi$ is a \emph{trace} on $\Gamma$;
by definition, the means that $\varphi$ is
\begin{enumerate}
    \item  positive-definite: $\sum_{i,j}\alpha_i \bar{\alpha}_j \varphi(x_j ^{-1}x_i)\geq 0$ for any $n\in \N$, $x_1 ,...,x_n\in \Gamma$, and $\alpha_1 ,...,\alpha_n \in \C$,
    \item normalized: $\varphi(e)=1$,
    \item conjugation-invariant: $\varphi(xyx^{-1})=\varphi(y)$ for all $x,y\in \Gamma$. 
\end{enumerate}
The set of traces on $\Gamma$, denoted by $\Tr{\Gamma}$, is a compact convex space with respect to the topology of pointwise convergence.
In fact, it is a Choquet simplex, and its extreme points are called \emph{characters} of $\Gamma$.
Note that, equivalent tracial representations give rise to the same trace. 
The GNS construction gives a converse, namely, every $\varphi \in \Tr{\Gamma}$ is of the form above, for a unique (up to quasi-equivalence) tracial representation $\pi_\varphi: \Gamma \to U(M_\varphi)$. 
In fact, a theorem of Thoma shows that $M_\varphi$ is a factor if and only if $\varphi$ is a character \cite{thoma1964unitare}.
We refer to \cite[Chapter 11]{BdlH} for all this and more information on characters and traces on groups.

A trace $\varphi$ is called \emph{finite dimensional} if it is of the form $\frac{1}{\dim\pi}\mathrm{tr}\circ{\pi}$ for some finite dimensional unitary representation $\pi$ of $\Gamma$. 
A trace $\varphi$ is called \emph{von Neumann amenable} \cite{BBHP}, or \emph{uniformly amenable} \cite{Brown}, if the corresponding tracial von Neumann algebra $(M,\tau)$ is amenable.
Thus, a von Neumann amenable character is either finite dimensional, or else it is of the form $\tau\circ\pi$ for some homomorphism $\pi:\Gamma\to U(\mathcal{R})$

\subsection{Convergence of marked von Neumann algebras}\label{subsec:marked}
Let $(M, \tau)$ be a tracial von Neumann algebra, and $x^1, \dots, x^k \in  U(M)$ be a tuple of unitaries that generate it.
We refer to the data $( M ; x^1, \dots, x^k)$, also denoted by $(M; \bar x)$ as a \emph{marked von Neumann algebra}, or simply refer to $\bar{x}=(x^1, \dots, x^k)$ as a \emph{tuple}, when $( M, \tau)$ is clear from the context.
Two marked von Neumann algebras $(M; x^1, \dots x^k)$, $(N; y^1,\dots y^k)$ are isomorphic if there is a trace preserving $*$-isomorphism $f: M \to N$ such that $f(x^i) = y^i$ for $1 \leq i \leq k$.

The set of isomorphism classes of $k$-marked von Neumann algebra admits a natural topology. We say that a sequence  $( M _n; x_n^1, \dots, x_n^k)$ \emph{converges} to  $( M ; x^1, \dots, x^k)$ if
\[
    \tau_n(p(x_n^1 ,...,x_n^k))\xrightarrow[n \to \infty]{} \tau(p(x^1 ,...,x^k))
\]
for any $*$-polynomial  $p$ in $k$ non-commuting variables. 

Equivalently, one can view the tuples as specified by a tracial representation of the free group $\langle s_1, \dots, s_k \rangle = \mathbb F_k$ on $k$-generators:
Each tuple $( M; x^1, \dots, x^k)$ gives a representation $\pi: \mathbb F_k \to U( M)$, by taking $\pi(s_i) = x^i$ for $i \leq k$.
Conversely, one can associate a tuple to every representation $\pi: \mathbb F_S \to U( M)$ by taking $x^i = \pi(s_i)$.
Under this identification, there is a map taking a $k$-marked von Neumann algebra $( M; \bar x)$ to a trace $\varphi \in \Tr{\mathbb F_k}$, defined as $\varphi = \tau \circ \pi$, for the associated representation $\pi: \mathbb F_k \to U( M)$ and the trace $\tau$ on $M$.
It is straightforward to see using the GNS-construction that this map induces a bijection between the isomorphism classes of $k$-marked von Neumann algebras, and the simplex of traces on the free group $\Tr{\mathbb F_k}$.
The latter is a compact space with respect to pointwise convergence of functions, and it is straightforward to see that the two topologies coincide.  
Note that this notion of convergence is also referred to as  \emph{convergence in moments}, or \emph{weak convergence}, as is common in free probability theory, where it is heavily investigated (see for example \cite[Definition 5.2.5]{Anderson_Guionnet_Zeitouni_2009}). 
We remark  that $\Tr{\mathbb F_k}$ is a metrizable Choquet simplex, and in fact a Poulsen simplex \cite{OSV}. This means that the set of tuples which generate factors is dense in the space of marked von Neumann algebras.
%It is easy to see that if $(M_1;\bar{x}_1)$ and $(M_2;\bar{x}_2)$ are two marked von Neumann algebras with corresponding traces $\varphi_1,\varphi_2\in \Tr{\F_k}$, and $\alpha\in (0,1)$, then $(M,\bar{x})$ where $M=M_1 \oplus M_2$, $\tau=\alpha \tau_1 \oplus (1-\alpha)\tau_2$ and $\bar{x}=\bar{x}_1\oplus \bar{x}_2$ is a marked von Neumann algebra whose corresponding trace is $\varphi =\alpha\varphi_1 +(1-\alpha)\varphi _2$.
%A curious fact, proven by Vigdorovich, Slutzky and Orovitz, it that under this topology, the tuples which generate \emph{factors} are dense.
%Further, the tuples which generate \emph{hyperfinite factors} are dense as well.

Let  $(\Gamma;\gamma_1 ,...,\gamma_k)$ be a  \emph{marked group}, that is, a group endowed with a generating $k$-tuple. 
Every representation $\pi \to U (M)$, where $M$ is a tracial von Neumann algebra and $\pi(\Gamma)''=M$, gives rise to a marked von Neumann algebra $(M;\pi(\gamma_1),...,\pi(\gamma_n))$. 
The set of all such marked von Neumann algebras is a closed face of the space of all marked von Neumann algebras; indeed, it corresponds to the closed face of $\Tr{\F_k}$ consisting of all traces factoring through the quotient $\F_k\to \Gamma$.

%   \begin{definition}[closeness in moments]
%       let $\bar{x} =  (x_1, \dots, x_k), x_i \in M$ and $\bar{y} = (y_1, \dots, y_k), y_i \in N$ be tuples of elements with $\| x_i \|, \| y_i \| \leq 1$ for all $i$. We say that $\bar{x}$ and $\bar{y}$ are \emph{$(\epsilon, t)$-close in moments}, or \emph{$(\epsilon,t)$}-close in distribution, if  for all noncommuting $*$-monomials $m(X_1, \dots X_k)$ with degree at most $t$ one has:
%       $$ | \tau_M(m(x_1, \dots x_k) )- \tau_N(m(y_1, \dots, y_k)) | < \epsilon$$
%   \end{definition}

For every normal element $x\in M$, we can consider its \emph{spectral measure}
$\mu=\mu_{x}\in \mathrm {Prob} (\C) $ defined by:
\[
\mu(f)=\tau\left(f(x)\right),\,\,\,\,\,\,f\in C_{c}(\C)
\]
where $f(x)$ is defined by continuous functional calculus arising from the embedding $M\subset \mathcal {B} (L^2(M,\tau))$.
The following lemma relates convergence in moments to weak-$*$ convergence of spectral measures.  

\begin{lemma}\label{lem:convergence of spectral measures}
Let $(M_n;\bar x_n)$ be a sequence of $k$-marked von Neumann algebras converging to $(M;\bar x)$, and let $p$ be a $*$-polynomial in $k$ variables. Assume that for all $n$, $p(\bar{x}_n) \in M_n$ and $p(\bar x)\in M$ are self-adjoint, and consider the spectral measures $\mu_n:=\mu_{p(\bar{x}_n)}$ and $\mu:=\mu_{p(\bar{x})}$. Then $\mu_n\to \mu$ in the weak-$*$ topology of probability measures on $\R$. 
\end{lemma}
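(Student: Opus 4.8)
The plan is to reduce weak-$*$ convergence of probability measures on $\mathbb{R}$ to convergence of moments, which is exactly the hypothesis we are handed. Recall that for probability measures on $\mathbb{R}$, weak-$*$ convergence $\mu_n \to \mu$ is equivalent to $\int \psi \, d\mu_n \to \int \psi \, d\mu$ for every bounded continuous $\psi$; and a standard fact is that this in turn follows from $\int q \, d\mu_n \to \int q \, d\mu$ for every polynomial $q$, \emph{provided} the measures $\mu_n$ all live in a fixed compact subset of $\mathbb{R}$ (so that polynomials can be uniformly approximated by polynomials on that compact set and tightness is automatic). So the first step is to check this uniform support bound.

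First I would observe that $\|x_n^i\|_{\op} \le 1$ for each $n$ and each $i$ (they are unitaries), hence $\|p(\bar x_n)\|_{\op} \le C$ where $C = C(p)$ is obtained by bounding $p$ as a $*$-polynomial evaluated at operators of norm $\le 1$ — concretely, replace each coefficient by its absolute value and each variable (and its adjoint) by $1$, giving a constant depending only on $p$, not on $n$. Since $p(\bar x_n)$ is self-adjoint with $\|p(\bar x_n)\|_{\op} \le C$, its spectrum lies in $[-C, C]$, so $\mu_n$ is supported in $[-C,C]$ for all $n$; likewise $\mu$ is supported in $[-C,C]$. Thus the sequence $(\mu_n)$ is tight.

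Next, for the moment convergence itself: for any $m \ge 0$, the $m$-th moment of $\mu_n$ is
\[
\int t^m \, d\mu_n(t) = \tau_n\bigl( p(\bar x_n)^m \bigr) = \tau_n\bigl( (p^m)(\bar x_n)\bigr),
\]
where $p^m$ denotes the $*$-polynomial obtained by raising $p$ to the $m$-th power (here using that $p(\bar x_n)$ is self-adjoint, so powers and $*$-powers agree, but in any case $p^m$ is again a $*$-polynomial in the $k$ non-commuting variables). By the hypothesis that $(M_n; \bar x_n) \to (M; \bar x)$ in moments, $\tau_n((p^m)(\bar x_n)) \to \tau((p^m)(\bar x)) = \tau(p(\bar x)^m) = \int t^m \, d\mu(t)$. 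Hence all moments converge.

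Finally I would conclude: since $(\mu_n)$ is tight (all supported in $[-C,C]$) and all moments converge to the corresponding moments of $\mu$, which is a compactly supported — hence moment-determinate — measure, we get $\mu_n \to \mu$ weak-$*$. For completeness one can argue directly on $[-C,C]$: by Weierstrass, any $\psi \in C([-C,C])$ is a uniform limit of polynomials, and moment convergence gives $\int \psi \, d\mu_n \to \int \psi \, d\mu$ via a standard $3\varepsilon$-estimate; since all the $\mu_n$ and $\mu$ are supported in $[-C,C]$, this yields weak-$*$ convergence of probability measures on $\mathbb{R}$. I do not anticipate a genuine obstacle here; the only point requiring a little care is the uniform operator-norm bound on $p(\bar x_n)$ (so that the compact support is uniform in $n$), which is what makes the polynomial-to-continuous-function approximation legitimate — without it the reduction from moments to weak-$*$ convergence would fail in general.
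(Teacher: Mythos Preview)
Your proof is correct and essentially identical to the paper's: both bound the operator norms of $p(\bar x_n)$ uniformly (since the $x_n^i$ are unitaries) to confine all spectral measures to a fixed interval $[-C,C]$, then use Stone--Weierstrass to reduce weak-$*$ convergence to convergence of polynomial moments, which follows directly from the definition of convergence of marked von Neumann algebras applied to $p^m$ (or $f\circ p$).
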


\begin{proof}
Since the the tuples $\bar x_n, \bar x$ all consists of unitaries, we have that for all $n$, $\| p(\bar x_n)\| \leq C$ and $\| p(\bar x) \| \leq C$ , where $C = \| p \|_1$ is the sum of the absolute values of the coefficients of $p$.
As such, the measures $\mu_n, \mu$ are all supported on the closed interval $[-C,C]$.
By the Stone-Weierstrass theorem, to prove that $\mu_n$ weakly converge to $\mu$, it is enough to show that for every univariate polynomial $f$, we have $\int f d\mu_n \to \int f d\mu $.
However, by definition of spectral measures, we have:
$$ \int f d\mu_n = \tau_n( f(p(\bar x_n))) = \tau_n(f\circ p(\bar x_n)) \to \tau(f\circ p(\bar x)) = \int f d\mu $$
where we used  convergence on the moment corresponding to the polynomial $f\circ p$. 
\end{proof}

\section{Local rigidity of asymptotic representations}\label{sec:local_rigidity}

This section is dedicated to proving a local-rigidity property for asymptotic representations of groups $\Gamma$ with property (T;FD), under the strong assumption of Hilbert--Schmidt stability:
We show that there exists some $\epsilon>0$ such that if two asymptotic representations of $\Gamma$ are asymptotically $\epsilon$-close to each other on the generators of $\Gamma$, then they are in fact asymptotically conjugate (the final statement will allow to compare asymptotic representations of different dimensions).
We first recall the local rigidity phenomenon that holds for finite dimensional unitary representations under the assumption of (T;FD) (see \cite{dlHRV}, \cite{Rapinchuk}).

%We will need the following consequence of property (T;FD), which is a local rigidity statement for finite dimensional unitary representations (cf. \cite{dlHRV}), which was also used in Becker Lubotzky's work \cite{BL}. Since we will need to compare representations of different dimensions, we include a proof for completeness.

\begin{lemma}[local rigidity of finite dimensional representations]
\label{lem:T_FD_rigidity_rep}
If  $\Gamma = \langle S \rangle$ has property (T;FD),  then for every $0< \delta<1/2$ there exists $\epsilon> 0$ such that the following holds:
For every $d$, and $M= M_{d}(\C)$, and a nonzero projection $p \in M$, if  $\pi:\Gamma\rightarrow U(p M p)$, $\rho: \Gamma \to U( M )$ are  unitary representations such that
$$\| \pi(s) - p\rho(s) p\|_{2,M} / \|p \|_{2,M}< \epsilon \text{ for all $s \in S$,}$$
then there exists $\xi \in p M$ with $ \| \xi -p \|_{2,M}  / \| p \|_{2,M}\leq \delta $, $\| \xi\|_{2,M} / \| p\|_{2,M} = 1$ and such that 
$$ \pi(g)\xi\rho(g)^* = \xi   \text{ for all $g \in \Gamma$.}$$
Furthermore, we have $\sup_{g \in \Gamma}\| \pi(g)  p\rho(g)^* - p\|_{2,M} / \|p \|_{2,M} \leq 2\delta$.
\end{lemma}

\begin{proof}
    Since $\Gamma$ has property (T;FD), the following holds: For every $\delta>0$ there exists $\epsilon> 0$ such that if $\alpha:\Gamma \to U(\cH)$ is a unitary representation with $\dim \cH < \infty$, and there is a unit vector $\xi \in \cH$ which is $(S,\epsilon)$-invariant, then there exists a unit vector $\eta \in \cH$ which is $\Gamma$-invariant and such that $\| \xi - \eta \| < \delta$ (See \cite[Proposition 1.1.9]{BdlH}, or \cite[Corollary 15.1.4]{Popa}, the proof is the same as in the case of property (T)).

Given $1/2>\delta>0$, let $\epsilon > 0$ be as in the conclusion of (T;FD) in the paragraph above.
Let $\pi: \Gamma \to U(pMp),\rho: \Gamma \to U(M)$ be given such that $\| \pi(s) - p\rho(s) p\|_{2,M} / \|p \|_{2,M}< \epsilon/2$ for $s \in S$.
Consider the Hilbert space $\cH = pL^2(M)$, with the norm given by $\|  x\| = \| x \|_{2,M} / \|p \|_{2,M}$ for $x \in \cH$.
We can then define the "left-right" unitary representation $\alpha: \Gamma \to U(\cH)$ by $\alpha(g) \xi =\pi(g) \xi \rho(g)$.
We claim that $\eta = p \in \cH$ is a $(S,\epsilon)$-invariant unit vector of $\alpha$. 
Indeed, we have for all $s \in S$:
\begin{align*}
    \| \alpha(s)p - p\|_{2,M}^2 &=  \| \pi(s)p \rho(s)^*- p\|_{2,M}^2 \\
    &= \| \pi(s) p \rho(s)^* \|_{2,M}^2 + \| p \|_{2,M}^2 - 2\Ra{\tau_{M}(p\pi(s)p\rho(s)^*)} \\
    &= 2 \left(\tau_{M}(p) - \Ra{\tau_{M}(p\pi(s)p\rho(s)^*p)} \right)\\
    &= 2 \left(\Ra{\tau_{M}(p\pi(s)(\pi(s)^* - p\rho(s)^*p))} \right)\\
    &\leq_{C.S} 2 \| p\pi(s) \|_{2,\tau_{M}} \| \pi(s)^* - p\rho(s)^*p \|_{2,M}\\
    &\leq \epsilon \cdot  \| p \|_{2,M} \|p \|_{2,M}\\
    &
= \epsilon \cdot  \| p \|_{2,M}^2.
\end{align*}

Consequently, there exists a unit vector $\xi \in \cH $ such that  $\pi(g)\xi \rho(g) = \xi$ for all $g \in \Gamma$ and $\| \xi - p \|_{2,M} \leq \delta \| p \|_{2,M}$. The furthermore conclusion follows.
\end{proof}

We need a strengthening of the above lemma, which will guarantee the existence of an intertwiner $\xi$ as above that has \emph{bounded operator norm}.

\begin{lemma}[local rigidity of finite dimensional representations, strengthened]
\label{lem:T_FD_rigidity_rep_strong}
Under the same assumptions and notation of Lemma \ref{lem:T_FD_rigidity_rep}, there exists $\xi \in p M$ with $ \| \xi -p \|_{2,M}  / \| p \|_{2,M}\leq \delta $, $\| \xi\|_{2,M} / \| p\|_{2,M} = 1$ \textbf{and} $\| \xi \|_{op} \leq 2$, such that:
$$ \pi(g)\xi\rho(g)^* = \xi   \text{ for all $g \in \Gamma$.}$$
%  Assume $\Gamma = \langle S \rangle$ has property (T;FD),  then for every $1/2 > \delta>0$ there exists $\epsilon> 0$ such that the following holds:
 %  For every $d$, and $M= M_{d}(\C)$, and a nonzero projection $p \in M$, if  $\pi:\Gamma\rightarrow U(p M p)$, $\rho: \Gamma \to U( M )$ are  unitary representations such that
 %  $$\| \pi(s) - p\rho(s) p\|_{2,M} / \|p \|_{2,M}< \epsilon \text{ for all $s \in S$,}$$
\end{lemma}

\begin{proof}
    Let $1/2 > \delta> 0$, by Lemma \ref{lem:T_FD_rigidity_rep} there exists $\epsilon$ such that if 
$$\| \pi(s) - p\rho(s) p\|_{2,M} / \|p \|_{2,M}< \epsilon \text{ for all $s \in S$,}$$
    then we have $\sup_{g \in \Gamma}\| \pi(g)  p\rho(g)^* - p\|_{2,M} / \|p \|_{2,M} \leq \delta$. Let $K$ be the convex hull of the set $\{ \pi(g) p \rho(g)^* | \; g \in \Gamma \}$ in $pM$. Note that $K$ is contained in the ball of radius $\delta$ around $p$ in the appropriately normalized $2$-norm.
    Further, we have $\| \pi(g) p \rho(g)^*\|_{\op} \leq 1$ for all $g \in \Gamma$, so that $K$ is bounded in operator norm by $1$.
    Consequently, letting $\eta$ be the unique vector of minimal $2$-norm in $K$ , we get an intertwiner $\eta$ such that $\| \eta \|_{\op} \leq 1$, and $ \| \eta -p \|_{2,M}  / \| p \|_{2,M}\leq \delta$.
    By the last inequality we have $|\| \eta \|_{2, M}  / \| p \|_{2,M} - 1| \leq \delta$, so that $\| \eta  \|_{2,M} / \| p \|_{2,M} \geq 1-\delta \geq  1/2$.
    Setting $\xi = (\| p\|_{2,M} / \| \eta \|_{2,M}) \cdot\eta $, we have $\| \xi \|_{op} \leq 2$ is an intertwining unit vector, and:
    \begin{align*}
        \| \xi - p \|_{2,M} / \| p \|_{2,M} &\leq \| \xi - \eta \|_{2,M} / \| p \|_{2,M} + \| \eta - p \|_{2,M} / \| p \|_{2,M} \\
        &\leq (\|p\|_{2,M} / \|\eta \|_{2,M} - 1)\| \eta  \| / \| p \|_{2,M} + \delta \\
        &\leq \delta (1 + \delta) + \delta.
    \end{align*}
    Thus, we get the lemma with $\delta^2 + 2 \delta$, which finishes the proof.
    
\end{proof}

We arrive at the main proposition, which adapts the above lemma to the case of asymptotic representations.

\begin{proposition}[local rigidity for approximate representations]
\label{prop:rigidity_almost_rep}
Assume $\Gamma = \langle S \rangle$ has property (T;FD) and is Hilbert--Schmidt stable.

Then for every $1/2 > \delta>0$ there exists $\epsilon> 0$ such that the following holds:
For every sequence $d_n$, and $M_n = M_{d_n}(\C)$, and projections $q_n \in M_n$ with $\liminf_n \| q_n\|_{2,M_n} > 0$, if  $\pi_n:\Gamma\rightarrow U(q_n M_n q_n)$, $\rho_n: \Gamma \to U( M_n )$ are asymptotic representations (with the corresponding normalized Hilbert Schmidt norms) such that for almost every $n$:
$$\| \pi_n(s) - q_n\rho_n(s) q_n\|_{2,M_n} / \|q_n \|_{2,M_n}< \epsilon \text{ for all $s \in S$,}$$
then there exist vectors $\xi_n \in q_nM_n$ of uniformly bounded operator norm such that $ \| \xi_n -q_n \|_{2,M_n}  / \| q_n \|_{2,M_n}\leq \delta$, $\| \xi_n \|_{2,M_n} / \| q_n \|_{2,M_n} =1$ and for  every $g \in \Gamma$:
$$\| \pi_n(g)  \xi_n\rho_n(g)^* - \xi_n\|_{2,M_n} / \|q_n \|_{2,M_n}  \to_n 0.$$
%   Consequently: we also have for all $g \in \Gamma$, we have
%   $$\| \pi_n(g)  q_n\rho_n(g)^* - q_n\|_{2,M_n} / \|q_n \|_{2,M_n} \leq 3\delta$$
%   for almost every $n$.

%then for almost every $n$, 
%there exists vectors $\xi_n \in q_n M_n$ with $ \| \xi_n -q_n \|_{2,M_n}  / \| q_n \|_{2,M_n}\leq \delta $, $\| \xi_n\|_{2,M_n} / \| q_n\|_{2,M_n} = 1$ and such that 
%$$\| \pi_n(g)\xi_n\rho_n(g)^* - \xi_n \|_{2,M_n} / \| q_n \|_{2,M_n} \xrightarrow[n]{} 0 \text{ for all $g \in \Gamma$.}$$

%Further, if $M_n$ is of the form $M_n = B_n \otimes \overline{B_n}$ for some matrix algebra $B_n$, and $q_n$ is of the form $e_n \otimes \overline{e_n}$, where $e_n \in B_n$ is a projection, then $\xi_n$ above can be chosen to be of the form $\eta_n \otimes \overline{\eta_n}$, $\eta_n \in e_nB_n$.
\end{proposition}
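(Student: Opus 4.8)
The plan is to use Hilbert--Schmidt stability to reduce to the case of honest finite-dimensional representations, where Lemma~\ref{lem:T_FD_rigidity_rep_strong} already supplies the intertwiners, and then to push the conclusion back down to the original asymptotic representations. A preliminary observation is that although stability is only \emph{flexible}, the generalized metric $d_2$ controls dimensions tightly: if $\sigma_n\colon\Gamma\to U(k_n)$ is an asymptotic representation and $\sigma'_n\colon\Gamma\to U(k'_n)$ are honest representations with $d_2(\sigma_n,\sigma'_n)\to 0$, then on a generator $\|\sigma_n(s)\oplus 0_{k'_n-k_n}\|_{2,k'_n}^2=k_n/k'_n$ while $\|\sigma'_n(s)\|_{2,k'_n}=1$, so $d_2(\sigma_n,\sigma'_n)\to 0$ forces $k'_n/k_n\to 1$. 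Together with the hypothesis $\liminf_n\|q_n\|_{2,M_n}>0$, which makes $r_n:=\rk(q_n)$ comparable to $d_n$, this renders all the dimension discrepancies introduced below negligible in the relevant $2$-norms.

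Concretely, I would proceed as follows. \emph{(i)} Apply Hilbert--Schmidt stability to $\pi_n$ and to $\rho_n$ to get honest representations $\pi'_n\colon\Gamma\to U(r'_n)$, $\rho'_n\colon\Gamma\to U(d'_n)$ with $d_2(\pi_n,\pi'_n)\to 0$, $d_2(\rho_n,\rho'_n)\to 0$; by the above $r'_n/r_n\to 1$ and $d'_n/d_n\to 1$, and fixing coordinate embeddings $\C^{r_n}\hookrightarrow\C^{r'_n}$, $\C^{d_n}\hookrightarrow\C^{d'_n}$, the top-left $r_n\times r_n$ corner of $\pi'_n(g)$ converges to $\pi_n(g)$ and the top-left $d_n\times d_n$ corner of $\rho'_n(g)$ converges to $\rho_n(g)$, for every fixed $g$ (the convergence passing from generators to all of $\Gamma$ via the asymptotic-homomorphism property). \emph{(ii)} Put $M'_n=M_{D_n}(\C)$ with $D_n=d'_n+(r'_n-r_n)=d_n(1+o(1))$, realize $\rho'_n$ on $M'_n$ (trivially on the $(r'_n-r_n)$-dimensional complement of $\C^{d'_n}$), and realize $\pi'_n$ on a corner $p_nM'_np_n$ where $p_n$ is a projection of rank $r'_n$ containing $q_n$ (so $\|p_n-q_n\|_2=o(\|q_n\|_2)$) and chosen so that the $\C^{r_n}$-part of $\pi'_n$ sits on the $q_n$-corner. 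Using $\|\pi_n(s)-q_n\rho_n(s)q_n\|_2<\epsilon\|q_n\|_2$ on $S$ together with (i) one checks $\|\pi'_n(s)-p_n\rho'_n(s)p_n\|_2/\|p_n\|_2<\epsilon+o(1)$ for $s\in S$ (on the $q_n$-corner this is the chain $\pi'_n\approx\pi_n\approx q_n\rho_nq_n\approx q_n\rho'_nq_n$; on the remaining $(r'_n-r_n)$-dimensional corner both sides have $2$-norm $o(\|q_n\|_2)$). \emph{(iii)} Choosing $\epsilon$ below the threshold $\epsilon_0(\delta/3)$ of Lemma~\ref{lem:T_FD_rigidity_rep_strong}, for $n$ large that lemma applied to $(\pi'_n,\rho'_n,p_n)$ produces $\xi'_n\in p_nM'_n$ with $\|\xi'_n\|_{\op}\le 2$, $\|\xi'_n-p_n\|_2/\|p_n\|_2\le\delta/3$, $\|\xi'_n\|_2=\|p_n\|_2$ and $\pi'_n(g)\xi'_n\rho'_n(g)^*=\xi'_n$ for all $g$. \emph{(iv)} Let $\xi_n$ be the compression of $\xi'_n$ with rows cut to $q_n$ and columns cut to $\C^{d_n}$, rescaled so $\|\xi_n\|_2=\|q_n\|_2$. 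Compression does not increase the operator norm and the rescaling factor stays bounded, so $\|\xi_n\|_{\op}$ is uniformly bounded; the cut-off pieces $(1-q_n)\xi'_n$ and $\xi'_n(1-P_{\C^{d_n}})$ have $2$-norm at most $\|\xi'_n\|_{\op}$ times the $2$-norm of an $o(d_n)$-dimensional projection, hence $o(\|q_n\|_2)$, which gives $\|\xi_n-q_n\|_2/\|q_n\|_2\le\delta$ for $n$ large; and for each fixed $g$, since $\xi'_n$ intertwines $\pi'_n,\rho'_n$ exactly while $\pi_n(g)$ matches $\pi'_n(g)$ on the $q_n$-corner up to $o(1)$ and $\rho_n(g)$ matches $\rho'_n(g)$ on $\C^{d_n}$ up to $o(1)$, combining these with $\|\xi_n\|_{\op}\le C$ and the $o(\|q_n\|_2)$ bounds for the cut-offs yields $\|\pi_n(g)\xi_n\rho_n(g)^*-\xi_n\|_2/\|q_n\|_2\to 0$.

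The main obstacle is exactly the dimension and support bookkeeping of steps (ii) and (iv): flexibility forces the corrected representations onto strictly larger spaces, so one must assemble them into a common matrix algebra in which $\pi'_n$ genuinely occupies a corner approximating $q_n$ and the defect inequality survives with a controlled loss, and then push the honest intertwiner back down. What makes this succeed is that every stray contribution has one of two shapes---either a factor $\|\sigma'_n-\sigma_n\|_2$ from a flexible correction, which tends to $0$ at each fixed group element, or the $2$-norm of $\xi'_n$ times an $o(d_n)$-dimensional projection, which is $o(\|q_n\|_2)$ precisely because $d_2\to 0$ pins the dimension ratios to $1$, because $\liminf\|q_n\|_2>0$ makes $r_n$ comparable to $d_n$, and because the intertwiner from Lemma~\ref{lem:T_FD_rigidity_rep_strong} has \emph{bounded operator norm} (this is why the operator-norm strengthening of Lemma~\ref{lem:T_FD_rigidity_rep} is needed here). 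One must also never propagate the hypothesis $\pi_n\approx q_n\rho_nq_n$ beyond the generating set---on a general $g$ it is only $O(\sqrt\epsilon)$-accurate---so all control at arbitrary $g\in\Gamma$ is routed through the honest corrections $\pi_n\approx\pi'_n$, $\rho_n\approx\rho'_n$. If $q_n=1$ along a subsequence the argument degenerates to asymptotically conjugating two nearby asymptotic representations, handled identically with $p_n=1$.
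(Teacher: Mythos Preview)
Your proposal is correct and follows essentially the same route as the paper: correct both asymptotic representations via flexible Hilbert--Schmidt stability, embed them into a common ambient matrix algebra so that Lemma~\ref{lem:T_FD_rigidity_rep_strong} applies to the honest pair, and then compress the resulting bounded-operator-norm intertwiner back to $q_nM_n$, using $\liminf_n\|q_n\|_2>0$ and the dimension ratios $r'_n/r_n,\,d'_n/d_n\to 1$ to show all discarded pieces are $o(\|q_n\|_2)$. The only cosmetic difference is that the paper builds the ambient algebra $\widetilde M_n$ more abstractly (first from the correction of $\rho_n$, then ``possibly enlarging'' it for $\pi_n$), while you specify $D_n=d'_n+(r'_n-r_n)$ explicitly and extend $\rho'_n$ trivially; the compression $\eta_n=q_n\xi_n t_n$ in the paper is exactly your step (iv).
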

Before proving the above proposition, let us collect some easy facts that are relevant to passing to large corners of an algebra, which we repeatedly use in the proof of Proposition \ref{prop:rigidity_almost_rep}.
\begin{lemma}\label{lem:flex_technicalities}
Let $(M_n,\tau_n)$ be a sequence of tracial von Neumann algebras, $t_n \in M_n$ be a sequences of projections with $\| t_n \|_{2,M} \to 1$, and $N_n = t_n M_n t_n$ be the corners with respect to $t_n$.
The following hold:
\begin{enumerate}
    \item  if $x_n \in N_n$ is a non-zero sequence, we have 
    $$\| x_n \|_{2,N_n} = \| x_n \|_{2,M_n} / \| t_n \|_{2,M_n} \text{ for all $n$},$$
    so that $\lim_{n\to \infty}\| x_n \|_{N_n} / \|x_n \|_{M_n} = 1$.
    \item If $x_n \in M_n$ is a sequence such that $\sup_n \| x_n \|_{\op} < \infty$, then $\| t_n x_n t_n - x_n\|_{2,M_n} \to_n 0$.
\end{enumerate}
\end{lemma}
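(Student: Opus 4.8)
The plan is to unwind the two $2$-norms appearing in the statement and reduce everything to the elementary submultiplicativity bound $\|ab\|_{2}\le\|a\|_{\op}\,\|b\|_{2}$.

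First I would handle part (1). The key observation is that the corner $N_n=t_nM_nt_n$ is itself a tracial von Neumann algebra, with unit $t_n$ and with canonical (normalized, faithful) trace $\tau_{N_n}(x)=\tau_n(x)/\tau_n(t_n)$ for $x\in N_n$; note that $\tau_n(t_n)=\|t_n\|_{2,M_n}^2$ since $t_n$ is a projection, and $\tau_n(t_n)>0$ for $n$ large because $\|t_n\|_{2,M_n}\to1$. Then for non-zero $x_n\in N_n$,
$$\|x_n\|_{2,N_n}^2=\tau_{N_n}(x_n^*x_n)=\frac{\tau_n(x_n^*x_n)}{\tau_n(t_n)}=\frac{\|x_n\|_{2,M_n}^2}{\|t_n\|_{2,M_n}^2},$$
and taking square roots gives the stated identity; the limit $\|x_n\|_{N_n}/\|x_n\|_{M_n}=1/\|t_n\|_{2,M_n}\to1$ is then immediate.

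For part (2), I would write $x_n-t_nx_nt_n=(1-t_n)x_n+t_nx_n(1-t_n)$ and estimate each summand in $\|\cdot\|_{2,M_n}$ by $\|x_n\|_{\op}\,\|1-t_n\|_{2,M_n}$, using $\|ab\|_{2,M_n}\le\|a\|_{\op}\|b\|_{2,M_n}$ together with $\|t_n\|_{\op}=1$ and $\|1-t_n\|_{\op}=1$. Since $t_n$ is a projection, $\|1-t_n\|_{2,M_n}^2=\tau_n(1-t_n)=1-\|t_n\|_{2,M_n}^2\to0$, and as $\sup_n\|x_n\|_{\op}<\infty$ the whole bound tends to $0$.

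I do not expect a genuine obstacle here: both statements are routine once the trace on the corner is correctly normalized to $\tau_n(\cdot)/\tau_n(t_n)$. The only point requiring a little care is bookkeeping the normalizing factor $\|t_n\|_{2,M_n}$ throughout, which is precisely why the lemma is isolated — these identities get invoked repeatedly in the proof of Proposition \ref{prop:rigidity_almost_rep} when one passes to the large corners $q_nM_nq_n$.
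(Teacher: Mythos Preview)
Your proof is correct. The paper does not actually supply a proof of this lemma --- it is introduced as ``some easy facts'' and immediately followed by the proof of Proposition~\ref{prop:rigidity_almost_rep} --- so there is nothing to compare against; your argument is exactly the routine verification one would expect. One tiny remark: for the first summand $(1-t_n)x_n$ in part~(2) you implicitly use the companion inequality $\|ab\|_{2}\le\|a\|_{2}\|b\|_{\op}$ (equivalently, $\|ab\|_2=\|b^*a^*\|_2\le\|b\|_{\op}\|a\|_2$), not only $\|ab\|_2\le\|a\|_{\op}\|b\|_2$; both are standard and this does not affect the argument.
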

\begin{proof}[Proof of Proposition \ref{prop:rigidity_almost_rep}]
    
    %Let $\epsilon = \delta\kappa / 2$.
    Let $\delta > 0$ be given, and choose $\epsilon > 0$ according to Lemma \ref{lem:T_FD_rigidity_rep_strong}. Let $q_n$, $M_n$, $\pi_n: \Gamma \to U(q_nM_nq_n)$, $\rho_n : \Gamma \to U(M_n)$ be given as in the statement of the proposition.
    Note that by $(1)$ in Lemma \ref{lem:flex_technicalities}, the Hilbert--Schmidt norm on $q_n M_n q_n$ is given by $\| x \|_{2, M_n} / \| q_n \|_{2,M_n}$ for $x \in q_n M_n q_n$.
    By applying Hilbert--Schmidt stability to $\rho_n$, we see that there exists finite dimensional factors $\widetilde{M}_n$, and projections $t_n \in \widetilde{M}_n$ such that $t_n \widetilde{M}_n t_n \simeq M_n$, $\| t_n \|_{2, \widetilde{M_n}} \to 1$ and there are representations $\tilde \rho_n: \Gamma \to U(\widetilde{M}_n)$ such that $\| \rho_n(g) - t_n \tilde \rho_n(g) t_n \|_{2, M_n} \to 0$ for all $g \in \Gamma$.
    We can also apply stability to $\pi_n$, and up to possibly enlarging $\widetilde{M}_n$ conclude that there exist projections $\tilde q_n \in \widetilde{M}_n$ such that $q_n \leq \tilde q_n$, $\|q_n\|_{2,\widetilde{M}_n} / \| \tilde q_n \|_{2,\widetilde{M}_n} \to 1$ and there are representations $\tilde{\pi}_n: \Gamma \to U(\tilde q_n \widetilde{M_n} \tilde q_n)$ such that $\| \pi_n(g) - q_n \tilde \pi_n(g) q_n \|_{2, M_n} / \|q_n \|_{2,M_n} \to 0$ for all $g \in \Gamma$.
    %and , where $\widetilde{M_n} = M_n^{\alpha_n}$ with $1 \leq  \alpha_n \in \N/{d_n}$ and $\alpha_n \to 1$,  and $t_n \in \widetilde{M_n}$ is the unit of $M_n$, and $\tilde q_n \in \widetilde{M_n}$ are projections with $q_n \leq \tilde q_n$ and $\dim{p_n} / \dim q_n \to 1$.
    %Note that we also have $\| p_n  - q_n \|_{2,\cH_n} /\| q_n \|_{2,\cH_n}\to 0$ and $\| p_n \|_{2,\cH_n} / \|q_n \|_{2, \cH_n} \to 1$.
    
    By  $(2)$ in Lemma \ref{lem:flex_technicalities}, for every $x_n \in \widetilde{M}_n$ of uniformly bounded operator norm, we have $ \| x_n -   t_n x_n t_n \|_{2, \widetilde{M}_n} \to 0$. 
    Consequently, by $(1)$ in Lemma \ref{lem:flex_technicalities}, we have  $\| \rho_n(g) - \tilde \rho_n(g) \|_{2, \widetilde{M_n}} \leq \| \rho_n(g) -  t_n \tilde \rho_n(g) t_n \|_{2, \widetilde{M_n}} + \|  t_n \tilde \rho_n(g) t_n - \tilde \rho_n(g) \|_{2, \widetilde{M_n}} \to 0$ for all $g \in \Gamma$.
    Similarly, for every $x_n \in \tilde q_n \widetilde{M}_n \tilde q_n$ of uniformly bounded operator norm, we have $ \| x_n -   q_n x_n q_n \|_{2, \widetilde{M}_n} / \| \tilde q_n\|_{2, \widetilde{M}_n} \to 0$,  and consequently we have $\| \pi_n(g) - \tilde \pi_n(g) \|_{2, \widetilde{M}_n} / \| \tilde q_n \|_{2,\widetilde{M}_n} \to 0$ for all $g \in \Gamma$.
    We have the following:
    \begin{multline*}
    \Vert q_n \rho_n(g) q_n- \tilde q_n\tilde \rho_n(g)\tilde q_n \|_{2,\widetilde{M}_n} / \|\tilde q_n \|_{2,\widetilde{M}_n}\\
    \leq \|q_n \rho_n(g) q_n - \tilde q_n \rho_n(g) \tilde q_n \|_{2,\widetilde{M}_n} / \| \tilde q_n \|_{2,\widetilde{M}_n} \\
    + \| \tilde q_n(\rho_n(g) - \tilde \rho_n(g))\tilde q_n\|_{2,\widetilde{M}_n} / \| \tilde q_n \|_{2,\widetilde{M}_n}.
    \end{multline*}
    Now, since $\| \tilde q_n(\rho_n(g) - \tilde \rho_n(g))\tilde q_n\|_{2,\widetilde{M}_n} \leq \| \rho_n(g) - \tilde \rho_n(g)\|_{2,\widetilde{M}_n}$,  and since $\liminf_n \|\widetilde{q}_n \|_{2, \widetilde{M}_n} \geq \liminf _n\|q_n \|_{2, \widetilde{M}_n} = \liminf _n\|q_n \|_{2, M_n}  > 0$, together with $(2)$ in Lemma \ref{lem:flex_technicalities} we see that the right hand side converges to $0$.
    Now, we have for all $s \in S$:
    \begin{align*}
        \|\tilde\pi_n(s) -\tilde q_n\tilde \rho_n(s) \tilde q_n\|_{2,\widetilde{M}_n} / \| \tilde q_n\|_{2, \widetilde{M_n} }&\leq  \| \pi_n(s) - \tilde \pi_n(s) \|_{2,\widetilde{M}_n} / \| \tilde q_n \|_{2,\widetilde{M}_n} \\
        &+ \Vert q_n \rho_n(s) q_n- \tilde q_n\tilde \rho_n(s)\tilde q_n \|_{2,\widetilde{M}_n} / \|\tilde q_n \|_{2,\widetilde{M}_n}\\
        &+ \| \pi_n(s) - q_n \rho_n(s) q_n \|_{2,\widetilde{M}_n} / \| \tilde q_n \|_{2,\widetilde{M}_n} 
    \end{align*}
    Using the assumption that $\| \pi_n(s) - q_n\rho_n(s) q_n\|_{2,M_n} / \|q_n \|_{2,M_n}< \epsilon \text{ for all $s \in S$}$ and almost every $n \in \N$, by taking the limsup to both sides, we see that for almost every $n$:
    $$\|\tilde\pi_n(s) -\tilde q_n\tilde \rho_n(s) \tilde q_n\|_{2,\widetilde{M}_n} / \| \tilde q_n\|_{2, \widetilde{M_n} } \leq \epsilon \text{ for every $s \in S$. }$$
    %We can now apply Lemma \ref{lem:T_FD_rigidity_rep} to the representations $\tilde \pi_n, \tilde \rho_n$  to conclude that for almost every $n$, we have $\sup_{g \in \Gamma} \| \tilde \pi_n(g)  \tilde q_n\tilde \rho_n(g)^* - \tilde q_n\|_{2,\widetilde{M}_n} / \|\tilde q_n \|_{2,\widetilde{M}_n} \leq 2\delta$.
    Now we can apply Lemma \ref{lem:T_FD_rigidity_rep_strong} and deduce that there exist $\xi_n \in \tilde q_n \widetilde M_n$ all of operator norm bounded by $2$ with $\| \xi_n - \tilde q_n \|_{2, \widetilde M_n} / \| \tilde q_n \|_{2,\widetilde{M_n}} \leq \delta$ and $\| \xi_n \|_{2,\widetilde{M}_n} / \| \tilde q_n \|_{2,\widetilde{M}_n} = 1$ for every $n$, such that $\tilde \pi_n(g) \xi_n \tilde \rho_n(g) = \xi_n$ for all $g \in \Gamma$.
 Define $\eta_n = q_n \xi_n t_n \in q_n M_n$. Observe that $\eta_n$ is a sequence with operator norm bounded by $2$.

 We claim that $\| \eta_n - q_n \|_{2,M_n} / \| q_n \|_{2,M_n} \leq  2\delta$ and  $\| \eta_n \|_{2, M_n} / \| q_n \|_{2,M_n} \to_n 1$. Indeed, since $\| \xi_n \|_{\op} \leq 2$:
 \begin{align*}
     \| \eta_n - &\xi_n \|_{2, \widetilde{M}_n} / \| \tilde q_n \|_{2,\widetilde{M}_n} \leq \| q_n\xi_n (t_n - 1) \|_{2, \widetilde{M}_n} / \| \tilde q_n \|_{2,\widetilde{M}_n} \\
     &+ \| (q_n - \tilde q_n )\xi_n  \|_{2, \widetilde{M}_n} / \| \tilde q_n \|_{2,\widetilde{M}_n}\\
     &\leq 2(\| 1 - t_n \|_{2,\widetilde{M}_n} + \| \tilde q_n - q_n \|_{2, \widetilde{M}_n}) / \| \tilde q_n \|_{2, \widetilde{M}_n}.
 \end{align*}
 As $\liminf_n \|\widetilde{q}_n \|_{2, \widetilde{M}_n} > 0$, we see the right hand side converges to $0$. As such, by $(1)$ in Lemma \ref{lem:flex_technicalities} and the previous estimates, we get the claim.

 Now, since $\| \eta_n - \xi_n \|_{2, \widetilde{M}_n} / \| \tilde q_n \|_{2,\widetilde{M}_n} \to_n 0$ and for all $g \in \Gamma$, $\tilde \pi_n(g) \xi_n \tilde \rho_n(g) = \xi_n$, we also have $\| \tilde\pi_n(g)  \eta_n\tilde \rho_n(g)^* - \eta_n\|_{2,\widetilde{M}_n} / \|\tilde q_n \|_{2,\widetilde M_n} \to_n 0$. Furthermore:
 \begin{align*}
    \| \pi_n(g)  \eta_n\rho_n(g)^* &- \eta_n\|_{2,\widetilde{M}_n} / \|\tilde q_n \|_{2,\widetilde M_n} \\
    &\leq \| (\pi_n(g) - \tilde \pi_n(g)) \eta_n \rho_n(g)^* \|_{2, \widetilde{M}_n} / \| \tilde q_n \|_{2, \widetilde{M}_n}\\
    &+\| \tilde \pi_n(g) \eta_n (\rho_n(g) - \tilde\rho_n(g))^* \|_{2, \widetilde{M}_n}/ \| \tilde q_n \|_{2, \widetilde{M}_n}\\
    &+ \| \widetilde\pi_n(g)  \eta_n \widetilde\rho_n(g)^* - \eta_n\|_{2,\widetilde{M}_n} / \|\tilde q_n \|_{2,\widetilde M_n} 
 \end{align*}
 The right hand side converges to $0$ by the previous estimates and the fact that $\| \eta_n \|_{\op} \leq 2$.
 Consequently, by $(1)$ in Lemma \ref{lem:flex_technicalities} we have: 
$$\| \pi_n(g)  \eta_n\rho_n(g)^* - \eta_n\|_{2,M_n} / \|q_n \|_{2,M_n}  \to_n 0 \text{ for all $g \in \Gamma$}.$$
By renormalizing $\eta_n$ with $\| q_n \|_{2, M_n} / \| \eta_n \|_{2,M_n}$, making it a unit vector, we obtain the proposition.
%we also see that for every $g \in \Gamma$ and almost every $n$ that $\| \pi_n(g)  q_n\rho_n(g)^* - q_n\|_{2,M_n} / \|q_n \|_{2,M_n} \leq 3\delta$.
\end{proof}

\section{Stability versus hyperlinearity}\label{sec:non-hyperlinear}

In this section we prove Theorem \ref{thm:main-non-hyperlinear} and  Corollary \ref{cor:SL_2}, which will follow from the following theorem regarding general $S$-arithmetic groups.
We refer to \cite{margulis1991discrete, Witte} for background on $S$-arithmetic groups.
%The main idea of the proof is to use property (T;FD) in conjunction with flexible stability to access strong enough spectral gap properties of \emph{asymptotically projective representations} (which seem to be stronger than robust property (T;FD)), which then allow to carry out the strategy used in \cite{Dog} to get a contradiction to the existence of a proper central extension.
\begin{theorem}
    \label{thm:main_S_arith}
    Let $F$ be a totally real number field, $S$ be a finite set of valuations on $F$ including all Archimedean ones, and $\mathcal O_S$ be the ring of $S$-integers in $F$.
    Let $\bG$ be a connected, absolutely almost simple, absolutely simply connected algebraic group defined over $F$. 
    
    Assume that the $F$-rank of $\bG$ is at least $1$, and the $S$-rank of $\bG$ is at least $2$, and that for one of the Archimedean valuations $v_0 \in S$, the Lie group $G(K_{v_0})$ has infinite cyclic fundamental group, where  $K_{v_0} \simeq \R$ is the completion of $F$ with respect to $v_0$.
    Let $\Gamma \leq \prod_{v \in S} \bG(K_v)$ be a lattice commensurable to $\bG(\mathcal O_S)$.
    If $\Gamma$ is Hilbert--Schmidt stable, then $\Gamma$ has a non-hyperlinear finite central extension.
\end{theorem}

We will fix the following notation throughout this section:
Recall that $\omega$ is a nonprincipal ultrafilter on the natural numbers. 
To avoid confusion, we will always add the relevant von Neumann algebra $M$ to the notation of the $2$-norm $\| \cdot \|_{2,M}$.
For a Hilbert space $\cH$, we will also make use of the (unnormalized) Hilbert--Schmidt on $HS(\cH)$, the Hilbert--Schmidt operators on $\cH$. To avoid confusion with the normalized Hilbert--Schmidt norm relevant in the tracial setting, we will refer to this norm as the Frobenius norm and denote it by $\| \cdot \|_{F}$. We also let $\| \cdot\|_{\tr}$ denote the Schatten 1-norm on trace class operators.
Further, for the rest of the paper, we will fix a number $1/2 > \theta_0 > 0$ such that $1 -\sqrt{2\theta_0} \geq 1/\sqrt{2}$, and $1 - \sqrt{2\theta_0} -\sqrt{1-(1-\sqrt{2\theta_0})^2} > \frac{1}{2}$.

\subsection{A Connes type lemma}
We will make use of the following two technical lemmas, the first of which can be compared with \cite[Theorem 1.2.2]{Connes}
   \begin{lemma}
       \label{lem:Connes_trick}
       Let $\cH$ be a Hilbert space, $U_i \in U(\cH)$ a (possibly infinite) collection of unitaries and $T \in HS(\cH) \simeq \cH \otimes \overline{\cH}$ a Hilbert--Schmidt operator such that:
       \begin{enumerate}
           \item $\| U_iTU_i^* - T \|_{F} \leq \epsilon$ for all $i$.
           \item $\| T \|_{F} = 1$.
           \item There is a unit vector $\xi \in \cH$ such that $\| T - \xi \otimes \bar \xi \|_{F} < \theta_0$, where $\xi \otimes \bar \xi \in HS(\cH)$ is the rank one projection on $\spam(\xi)$.
           %\item There is a unit vector $\xi \in \cH$ such that $\| T - \xi \otimes \bar \xi \|_{F} < \theta_0$ for $\theta_0$ such that $1-2\sqrt{2\theta_0} > \frac{1}{2}$.\iti{so $\theta_0$ is not small? can we remove $\theta_0$ and just use the absolute bound?}\admid{yes, definitely, I was lazy to do the calculation}\ad{doesn't this imply that T is mostly positive?}
       \end{enumerate}
   Then the top singular value  $\lambda_1$ of  $T$ is of multiplicity $1$ and it satisfies $\lambda_1\geq 1/2$. Furthermore, $|\langle U_i\eta,\eta \rangle| > \sqrt{1- 8\epsilon}$ for all $i$,  where $\eta \in \cH$ is a choice of a unit vector with respect to the $\lambda_1$-singular value. 
   %$\| U_i (\eta \otimes \bar\eta) U_i^* - \eta \otimes \bar\eta \|_F \leq O(\epsilon)$ for all $i$.
   %Then the largest eigenvalue $\lambda_1$ of $T$ has multiplicity $1$, and if $\eta$ is the corresponding eigenvector, then $U (\eta \otime) \bar\eta U^* \approx_{O(\epsilon)} \eta \otimes \bar\eta$.
   \end{lemma}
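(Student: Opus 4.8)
The plan is to first use the Schmidt (singular value) decomposition of $T$ together with the rank‑one approximation in hypothesis~(3) to isolate a simple dominant singular value $\lambda_{1}$, and then to run a Davis--Kahan type perturbation argument on the positive operator $TT^{*}$; this second step is the ``Connes trick'', extracting an almost $U_i$‑invariant \emph{vector} of $\cH$ out of the almost $U_i$‑invariant Hilbert--Schmidt \emph{operator} $T$.

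First I would write $T=\sum_{j}\lambda_{j}\,\eta_{j}\otimes\bar\zeta_{j}$ with $\lambda_{1}\ge\lambda_{2}\ge\cdots\ge 0$ and $(\eta_{j})$, $(\zeta_{j})$ orthonormal systems, so that $\sum_{j}\lambda_{j}^{2}=\|T\|_{F}^{2}=1$. Since $\xi\otimes\bar\xi$ has rank at most one, the Eckart--Young--Mirsky theorem gives $\sum_{j\ge 2}\lambda_{j}^{2}=\min\{\|T-R\|_{F}^{2}:\rk R\le 1\}\le\|T-\xi\otimes\bar\xi\|_{F}^{2}<\theta_{0}^{2}$. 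Hence $\lambda_{1}^{2}=1-\sum_{j\ge 2}\lambda_{j}^{2}>1-\theta_{0}^{2}$ and $\lambda_{2}\le\bigl(\sum_{j\ge 2}\lambda_{j}^{2}\bigr)^{1/2}<\theta_{0}$. As $\theta_{0}<1/2$ we get $\lambda_{1}>\sqrt{1-\theta_{0}^{2}}>1/2$ and $\lambda_{2}<\theta_{0}<\sqrt{1-\theta_{0}^{2}}<\lambda_{1}$, so $\lambda_{1}$ is a simple singular value. Set $\eta:=\eta_{1}$; since the eigenspace is one‑dimensional, $\eta$ is unique up to a phase, and $|\langle U_{i}\eta,\eta\rangle|$ does not depend on that phase.

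Next put $A:=TT^{*}=\sum_{j}\lambda_{j}^{2}\,\eta_{j}\otimes\bar\eta_{j}$, a positive operator whose top eigenvalue is $\lambda_{1}^{2}$, with one‑dimensional eigenspace $\C\eta$, and whose operator norm on $(\C\eta)^{\perp}$ is at most $\lambda_{2}^{2}$. For each $i$, $U_{i}AU_{i}^{*}=(U_{i}TU_{i}^{*})(U_{i}TU_{i}^{*})^{*}$ has top eigenvector $U_{i}\eta$ for the same eigenvalue $\lambda_{1}^{2}$, and $\|A-U_{i}AU_{i}^{*}\|_{F}\le 2\,\|T\|_{\op}\,\|U_{i}TU_{i}^{*}-T\|_{F}\le 2\epsilon$, using $\|T\|_{\op}=\lambda_{1}\le\|T\|_{F}=1$ and hypothesis~(1). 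Writing $\eta=c\,U_{i}\eta+w$ with $w\perp U_{i}\eta$ and $c=\langle\eta,U_{i}\eta\rangle$, one computes $(A-U_{i}AU_{i}^{*})\eta=\lambda_{1}^{2}w-(U_{i}AU_{i}^{*})w$, and since $w$ lies in the $U_{i}AU_{i}^{*}$‑invariant subspace $(\C U_{i}\eta)^{\perp}$ on which $U_{i}AU_{i}^{*}$ has norm at most $\lambda_{2}^{2}$, this forces $(\lambda_{1}^{2}-\lambda_{2}^{2})\|w\|\le\|(A-U_{i}AU_{i}^{*})\eta\|\le 2\epsilon$. Finally $\lambda_{1}^{2}-\lambda_{2}^{2}>1-2\theta_{0}^{2}>1/2$, so $\|w\|<4\epsilon$ and $|\langle U_{i}\eta,\eta\rangle|^{2}=|c|^{2}=1-\|w\|^{2}>1-16\epsilon^{2}\ge 1-8\epsilon$ (the last inequality holding whenever $\epsilon\le 1/2$, the statement being vacuous for $\epsilon\ge 1/8$), which yields the asserted bound $|\langle U_{i}\eta,\eta\rangle|>\sqrt{1-8\epsilon}$.

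There is no deep obstacle here --- the argument is a careful but essentially routine perturbation computation. The points that need attention are: verifying that $w$ genuinely lies in the invariant subspace where $U_{i}AU_{i}^{*}$ is controlled by $\lambda_{2}^{2}$ (this uses self‑adjointness of $A$ and that $U_{i}\eta$ is an honest eigenvector, not merely an approximate one), and noting that possible non‑separability of $\cH$ causes no trouble, since $T$ is Hilbert--Schmidt, hence compact, and its singular value decomposition is an at‑most‑countable sum. If ``a unit vector with respect to the $\lambda_{1}$‑singular value'' is meant to be a top \emph{right} singular vector, the identical argument applies with $T^{*}T$ replacing $TT^{*}$.
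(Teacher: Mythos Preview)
Your proof is correct, and it follows a genuinely different route from the paper's. The paper first passes from $T$ to $|T|$ via the Powers--St{\o}rmer inequality, obtaining $\|U_i|T|U_i^*-|T|\|_F\le\sqrt{2\epsilon}$ and $\||T|-\xi\otimes\bar\xi\|_F<\sqrt{2\theta_0}$, and then runs the perturbation argument on the positive operator $S=|T|$; this is why the paper must impose the rather delicate constraints $1-\sqrt{2\theta_0}\ge 1/\sqrt2$ and $1-\sqrt{2\theta_0}-\sqrt{1-(1-\sqrt{2\theta_0})^2}>1/2$ on $\theta_0$. By contrast, you bypass Powers--St{\o}rmer entirely: working with $A=TT^*$ directly and invoking Eckart--Young--Mirsky on hypothesis~(3) gives the sharper singular-value estimates $\lambda_1^2>1-\theta_0^2$, $\lambda_2<\theta_0$ with no square-root loss, so only the bare assumption $\theta_0<1/2$ is needed, and you end up with the stronger bound $|\langle U_i\eta,\eta\rangle|^2>1-16\epsilon^2$. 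The paper's approach is the classical ``Connes trick'' in its traditional form (via $|T|$), while yours is a cleaner Davis--Kahan argument on $TT^*$; both reach the goal, but yours is more elementary and yields better constants. Your final remark that the same works with $T^*T$ is relevant: the paper in fact takes $\eta$ to be a \emph{right} singular vector (an eigenvector of $|T|$) in the subsequent corollary.
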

   \begin{proof}
       We claim that $\| U_i T^*TU_i^* - T^*T \|_{\tr}< 2\epsilon$ for all $i$. Indeed, we have $\| U_iT^*U_i^* - T^* \|_{F}  < \epsilon$, and so
   
   \begin{align*}
       \|U_i T^*U_i^*U_iT U_i^* - T^*T \|_{\tr} &\leq \| (U_i T^* U_i^* - T^*) U_i T U_i^* \|_{\tr} + \| T^*(U_iTU_i^* - T) \|_{\tr}\\
       &\leq \| (U_i T^* U_i^* - T^*) \|_F\| U_i T U_i^* \|_F\\
       &+ \| T^* \|_F \|(U_iTU_i^* - T) \|_{F} \\
       &< 2\epsilon.
   \end{align*}
       Note that we used the inequality $\|A B\|_{\tr} \leq \|A \|_F\|B\|_F$ for $A,B \in HS(\cH)$. 
       Now, by the Powers-Størmer inequality \cite[Proposition 6.2.4]{BO}, we have
       $$ \| U_i |T|U_i^* - |T| \|_F \leq \| U_i T^*T U_i^* - T^*T\|_{\tr}^{1/2} \leq \sqrt{2\epsilon}.$$
       Denote by $S = |T|$, and note that it is a positive Hilbert--Schmidt operator. Since $\| T -\xi \otimes \bar\xi \|_{F} < \theta_0$ and $(\xi \otimes \bar \xi) = (\xi \otimes \bar \xi)^* = |(\xi \otimes \bar \xi)|$, by a similar analysis to above we get that $\|S - \xi \otimes \bar{\xi} \|_F < \sqrt{2\theta_0}$ . 
       %\iti{I don't see it, please elaborate}\ad{Will do, I think NPS use this as well}
       Note that we in particular have $\| U_i S U_i^* - S \|_{\op} < \sqrt{2\epsilon}$,  and $\| S \|_F = 1$.
       Let $\lambda_1$ be the largest eigenvalue of $S$, we have $\lambda_1 = \| S \|_{\op}$. Since $\| S - \xi \otimes \bar\xi \|_{op}  < \sqrt{2\theta_0}$, so $ \lambda_1 > 1 - \sqrt{2\theta_0}$. 
       Moreover $\mathrm{mult}(\lambda_1)$, the multiplicity of  $\lambda_1$, satisfies $\mathrm{mult}(\lambda_1) (1- \sqrt{2\theta_0})^2< \mathrm{mult}(\lambda_1) \lambda_1^2\leq \| S \|_F^2 = 1$, and therefore $\mathrm{mult}(\lambda_1)=1$.

       By the spectral theorem for self-adjoint compact operators, there exists an orthonormal basis $e_n$, so that $$S = \sum_{n=1}^\infty \lambda_n e_n \otimes \bar{e}_n,$$ and $\lambda_1 > \lambda_2 \geq\lambda_3 \dots\geq 0$  is the decreasing  sequence of (possibly repeated) eigenvalues.  Denote $R=\sum_{n=2}^\infty \lambda_ne_n\otimes \bar e_n$. Since $\lambda_1^2> (1-\sqrt{2\theta_0})^2$ and $\|S\|_F^2=1$, we have by Pythagoras that $\| R \|_{\op}^2 \leq \|R\|_F^2=\sum_{n\geq 2} |\lambda_n|^2<1-(1-\sqrt{2\theta_0})^2$.
       Fix $i$, and we will show that $|\langle U_ie_1, e_1\rangle| \geq \sqrt{1-8\epsilon}$, then taking $\eta = e_1$ we will have proved the claim.
       Denote  $v = U_ie_1$, and write $v = \langle v,e_1 \rangle e_1  + w$, where $w \in \overline{\spam\{e_n | n \geq 2\}}$. 
       Then since $\| U_i S U_i^* - S \|_{\op} < \sqrt{2\epsilon}$ we have 
       \begin{align*}
           \sqrt{2\epsilon}&> \|U_iSU_i^*v - Sv \| =\|\lambda_1 v - Rv \|\\
           &= \| \lambda_1  \langle v,e_1 \rangle e_1  + \lambda_1w  - \lambda_1  \langle v,e_1 \rangle e_1  - Sw \| \\
           &= \| \lambda_1w - Rw \|\\
           &\geq \|\lambda_1w\| - \|Rw\| \\
           &\geq (\lambda_1 - \sqrt{1-(1-\sqrt{2\theta_0})^2})\| w\|\\
           &\geq (1 - \sqrt{2\theta_0} -\sqrt{1-(1-\sqrt{2\theta_0})^2}) \|w\|\\
           &\geq (1/2) \| w\|
       \end{align*}
   Where we used that $1 - \sqrt{2\theta_0} -\sqrt{1-(1-\sqrt{2\theta_0})^2} > \frac{1}{2}$ in the last inequality. We thus have that $\| U_ie_1 - \langle U_ie_1,e_1 \rangle e_1\| = \| v - e_1\otimes \bar e_1(v) \|= \|w\| < 2 \sqrt{2\epsilon}$.
   As such, we have that $8\epsilon> \| U_ie_1 - \langle U_ie_1,e_1 \rangle e_1\|^2 = 1 + |\langle U_ie_1,e_1  \rangle |^2 - 2|\langle U_i e_1, e_1 \rangle|^2$.
   Consequently, we have $|\langle U_i e_1, e_1 \rangle| > \sqrt{1- 8\epsilon}$ as promised. 
   \end{proof}

We deduce the following, when specializing to the standard and coarse bimodules of a tracial von Neumann algebra:

\begin{corollary}
\label{cor:Connes_trick_vN}
    Let $(M,\tau)$ be a tracial von Neumann algebra, $p \in M$ a non-zero projection, $u_i \in U(pMp)$ and $v_i \in U(M)$ a (possibly infinite) collection of unitaries. 
    If $\xi \in pM \otimes \overline{pM}$ is an element such that 
    \begin{enumerate}
        \item  for  all $i \leq k$:
        \begin{align*}
            \| (u_i\otimes \bar u_i)\xi  (v_i \otimes \bar v_i)^* &- \xi \|_{2,M\otimes\overline M}/ \|p\|^2_{2,M} \\
            &=\| ((u_i Jv_iJ)\otimes \overline{u_iJv_iJ})\xi - \xi \|_{2,M\otimes\overline M}/ \|p\|^2_{2,M} \leq \epsilon
        \end{align*}
        \item $\| \xi \|_{2,M \otimes \overline{M}} / \|p\|^2_{2,M}= 1$.
        \item $\| \xi - p \otimes \bar p \|_{2, M \otimes \overline M}/ \|p\|^2_{2,M} < \theta_0$. 
        %\item $\| \xi - p \otimes \bar p \|_{2, M \otimes \overline M}/ \|p\|^2_{2,M} \leq \theta_0$ for $\theta_0$ such that $1-2\sqrt{2\theta_0} > \frac{1}{2}$.
        %\item $\xi$ is of the form $\sum_i \alpha_j (\eta_j \otimes \bar \eta_j)$, for some $\eta_j \in pM$ and $\alpha_j \geq 0$.
    \end{enumerate}
   Then there exists $\eta \in pM$ such that $\| \eta \|_{2,M} / \|p \|_{2,M} = 1$, $\| \eta \|_{op} \leq 2 \| \xi\|_{\op} / \| p \|_{2,M}$ and $|\langle u_i\eta v_i^*,\eta \rangle_{2,M} / \| p \|_{2,M}^2| > \sqrt{1- 8\epsilon}$ for all $i$.
\end{corollary}

\begin{proof}
    This is  an application of Lemma \ref{lem:Connes_trick} to the Hilbert space $\cH =p L^2(M)$ equipped with the norm $\| x\|_{2,M} / \| p \|_{2,M}$ for $x \in \cH$, by noting the canonical isomorphism of Hilbert spaces $HS(\cH) \simeq \cH \otimes \overline \cH \simeq (p \otimes \bar p)(L^2(M) \otimes L^2(\overline M))$, and taking $T \in HS(\cH)$ to be the operator corresponding to $\xi$, and the unitaries $U_i = u_i \cdot J v_i J \in U(\cH)$.
    Since under this isomorphism, we have $(U_i\otimes \overline{U}_i) \xi$ corresponds to the operator $U_iTU_i^* \in HS(\cH)$, the first three conditions above correspond to the three conditions listed in Lemma \ref{lem:Connes_trick}, so we obtain a vector $\eta \in \cH$ such that $\| \eta \|_{2,M} / \|p \|_{2,M} = 1$ and $|\langle u_i \eta v_i^*, \eta \rangle / \| p \|_{2,M}^2 |= |\langle U_i\eta,\eta \rangle_\cH| > \sqrt{1- 8\epsilon}$ for all $i$. 
    
    We are left with proving $\| \eta \|_{op} \leq 2 \| \xi\|_{\op} / \| p \|_{2,M}$.
    Using the singular value decomposition of operator $T \in HS(\cH)$ corresponding to $\xi$, we can write $\xi = \sum_{n=1}^\infty \lambda_n\cdot  (f_n \otimes \bar e_n)$, where $e_n, f_n \in \cH$ are orthonormal (also known as the Schmidt decomposition of $\xi$) and $\lambda_n \geq 0$ decreasing. As such, $\langle f_n, f_m\rangle_{2,M} = \| p\|_{2,M}^2\cdot \delta_{n,m}$ and $\langle e_n, e_m\rangle_{2,M} = \| p\|_{2,M}^2\cdot \delta_{n,m}$.
    Note that $|T| = \sum_{n=1}^\infty \lambda_n(e_n\otimes \bar e_n)$, so $\lambda_1$  and  $e_1 = \eta$, are the same as in Lemma \ref{lem:Connes_trick}. 
    In particular  $\lambda_1 \geq  1/2$.
    Write $e_1=  | e_1| u^*$, the polar decomposition of $e_1 \in \cH$, where $u \in U(M)$ is unitary\footnote{Note that for tracial von Neumann algebra, the unitary part of the polar decomposition can be taken to be unitary, and not just a partial isometry \cite{943650}.}.
  %     Let $x \in M$ be such $\| x \|_{2,M}  = 1$ and $\langle |e_1| \cdot x, x \rangle_{2,M}  \geq C$ for some $C \in \R$,\iti{From here I don't really understand. Don't you want $x$ to be such that $\langle |e_1| \cdot x, x \rangle_{2,M} =\||e_1|\|$ (or at least approximately)?} we will show $\langle\xi \cdot (1\otimes \overline{ux}), f_1 \otimes \bar x \rangle_{2, M \otimes \overline M} \geq 1/2 \| p \|_{2,M}^2 C $.
  %     This would imply that $\| \xi \|_{\op}  \geq 1/2 \| p\|_{2,M}\||e_1| \|_{\op} = 1/2 \| p \|_{2,M} \| \eta \|_{\op}$ since $\|1\otimes \overline{ux} \|_{2,M\otimes \overline M}   = 1,  \| f_1 \otimes \bar x \|_{2, M \otimes \overline{M}}   = \| p \|_{2, M}$, and finish our claim.
  %     Indeed, using the fact that $f_n$ are orthonormal in $\cH$, we see:
  %     \begin{align*}
  %         \langle\xi \cdot (1\otimes \overline{ux}), f_1 \otimes \bar x \rangle_{2, M \otimes \overline M} &= \sum_{n=1}^\infty \lambda_n\langle (f_n  \otimes \overline{e_n ux}), f_1 \otimes \bar x \rangle_{2, M \otimes \overline M} \\
  %         &= \sum_{n=1}^\infty \lambda_n\langle f_n, f_1 \rangle_{2, M}\cdot  \overline{\langle e_n ux,  x \rangle_{2, M }} \\
  %         &= \lambda_1 \| p \|_{2,M}^2 \overline{\langle e_1 ux,  x \rangle_{2, M }} \\
  %         &\geq 1/2 \| p \|_{2,M}^2 \langle |e_1| x,  x \rangle_{2, M } \\
  %         &\geq 1/2 \| p \|_{2,M}^2 C,
  %     \end{align*}
  % 
  %     \iti{alternative ending}
    Let $x \in M$ be such $\| x \|_{2,M}  = 1$ and $\langle |e_1| \cdot x, x \rangle_{2,M}  \geq \||e_1|\|_{\op}/2 = \| \eta \|_{\op} / 2$. Then, since $\|1\otimes \overline{ux} \|_{2,M\otimes \overline M}   = 1,  \| f_1 \otimes \bar x \|_{2, M \otimes \overline{M}}   = \| p \|_{2, M}$, we have:
    \begin{align*}
        \|\xi \|_{\op} \| p \|_{2,M}&\geq\langle\xi \cdot (1\otimes \overline{ux}), f_1 \otimes \bar x \rangle_{2, M \otimes \overline M} \\
        &= \sum_{n=1}^\infty \lambda_n\langle (f_n  \otimes \overline{e_n ux}), f_1 \otimes \bar x \rangle_{2, M \otimes \overline M} \\
        &= \sum_{n=1}^\infty \lambda_n\langle f_n, f_1 \rangle_{2, M}\cdot  \overline{\langle e_n ux,  x \rangle_{2, M }} \\
        &= \lambda_1 \| p \|_{2,M}^2 \overline{\langle e_1 ux,  x \rangle_{2, M }} \\
        &\geq 1/2 \| p \|_{2,M}^2 \langle |e_1| x,  x \rangle_{2, M } \\
        &\geq 1/4 \| p \|_{2,M}^2 \|\eta\|_{\op}\\
    \end{align*}
%   \begin{multline*}
%       \langle((u\otimes \bar u)^*\xi \cdot (x\otimes \bar x), x \otimes \bar x \rangle_{2, M \otimes \overline M} =\\ \lambda_1 \langle (u^*\eta \otimes \overline{u^*\eta}) (x \otimes \bar x), (x \otimes \bar x)\rangle + \sum_n \lambda_n \langle (u^*e_n \otimes \overline{u^*e_n}) (x \otimes \bar x), (x \otimes \bar x)\rangle  
%   \end{multline*}
%   \begin{multline*}
%       = \lambda_1|\langle |\eta | x, x \rangle_{2,M}|^2 + \sum_n \lambda_n|\langle (u^*e_n) x, x \rangle_{2,M}|^2 \geq (1/2) C.
%   \end{multline*}
    and so we have $\| \eta \|_{\op} \leq 4 \|\xi \|_{\op} / \| p \|_{2,M}$.

\end{proof}

\subsection{Asymptotically projective representations}

We denote by $\T$ the circle group, and by $Z^2(\Gamma, \T)$ the abelian group of $2$-cocycles $c: \Gamma \times \Gamma\to \T$, where $\T$ is treated as a $\Gamma$-module with a trivial action.

\begin{definition}
    \label{def:asym_proj}
    A sequence of maps $\pi_n: \Gamma \to U(d_n)$ is said to be an \emph{asymptotically projective representation} if there is a $2$-cocycle $c \in Z^2(\Gamma, \T)$ such that for all $g,h \in \Gamma$:
    $$\lim_n \| \pi_n(g)\pi_n(h) - c(g,h) \pi_n(gh)\|_2 = 0.$$
    We will say that the cocycle $c$ is \emph{associated} with $\pi_n$.
\end{definition}

Note that if $c$ is trivial, then $\pi_n$ above is an asymptotic representation.
We will show the following local rigidity property for asymptotically projective representations whose cocycle is not a coboundary.
This can be as thought of as a version of lemma due to Nicoara, Popa and Sasyk \cite[Lemma 1.1]{zbMATH05116360} suitable to deal with asymptotically projective representations.
    
\begin{theorem}
\label{thm:NPS_analog}
Assume $\Gamma = \langle S \rangle$ has property (T;FD) and is Hilbert--Schmidt stable.

There exists $\epsilon> 0$ such that for every sequence $d_n \in \N$, $M_n = M_{d_n}(\C)$ and projections $q_n \in M_n$ with $\liminf_n \| q_n\|_{2,M_n} > 0$, the following holds: If  $\rho_n: \Gamma \to U( M_n )$ is an asymptotic representation and $\pi_n:\Gamma\rightarrow U(q_n M_n q_n)$ is an asymptotically projective representation  (with the corresponding normalized Hilbert Schmidt norms) such that for almost every $n$:
$$\| \pi_n(s) - q_n\rho_n(s) q_n\|_{2,M_n} / \|q_n \|_{2,M_n}< \epsilon \text{ for all $s \in S$,}$$
then the cocycle associated with $\pi_n$ is a $2$-coboundary.
\end{theorem}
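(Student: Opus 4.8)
The plan is to kill the cocycle by passing to tensor squares, apply the stability-based local rigidity of Proposition~\ref{prop:rigidity_almost_rep} to the resulting \emph{genuine} asymptotic representations, and then run a Connes-type argument (Corollary~\ref{cor:Connes_trick_vN}) to produce an honest ``almost-intertwining vector'' $\eta_n$; the $1$-cochain trivializing $c$ will be read off from the phases by which $\eta_n$ conjugates $\pi_n$ onto $\rho_n$.

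First, let $c\in Z^2(\Gamma,\T)$ be the cocycle associated with $\pi_n$. Since $c(g,h)\overline{c(g,h)}=1$, the maps $\pi_n\otimes\bar\pi_n\colon\Gamma\to U\bigl((q_n\otimes\bar q_n)(M_n\otimes\overline{M_n})(q_n\otimes\bar q_n)\bigr)$ and $\rho_n\otimes\bar\rho_n\colon\Gamma\to U(M_n\otimes\overline{M_n})$ are genuine asymptotic representations, and the triangle estimate $\|a\otimes\bar a-b\otimes\bar b\|_2\le\|a\|_2\|a-b\|_2+\|a-b\|_2\|b\|_2$ upgrades the hypothesis to $\|(\pi_n\otimes\bar\pi_n)(s)-(q_n\otimes\bar q_n)(\rho_n\otimes\bar\rho_n)(s)(q_n\otimes\bar q_n)\|_2/\|q_n\otimes\bar q_n\|_2<2\epsilon$ for $s\in S$. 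Choosing $\epsilon$ to be half the constant furnished by Proposition~\ref{prop:rigidity_almost_rep} applied with $\delta:=\theta_0/2$ (note $\liminf_n\|q_n\otimes\bar q_n\|_2=\liminf_n\|q_n\|_{2,M_n}^2>0$), we obtain $\xi_n\in(q_n\otimes\bar q_n)(M_n\otimes\overline{M_n})$ of uniformly bounded operator norm with $\|\xi_n\|_2/\|q_n\otimes\bar q_n\|_2=1$, $\|\xi_n-q_n\otimes\bar q_n\|_2/\|q_n\otimes\bar q_n\|_2\le\theta_0/2$, and $\|(\pi_n\otimes\bar\pi_n)(g)\xi_n(\rho_n\otimes\bar\rho_n)(g)^*-\xi_n\|_2/\|q_n\otimes\bar q_n\|_2\to_n 0$ for all $g\in\Gamma$.

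Next, for each fixed $g$ apply Corollary~\ref{cor:Connes_trick_vN} with $M=M_n$, $p=q_n$, the single pair $(\pi_n(g),\rho_n(g))$, and $\epsilon=\epsilon_{n,g}$ the displayed intertwining defect at $g$, which tends to $0$; conditions (1)--(3) hold since $\theta_0/2<\theta_0$. The vector it produces is, up to a phase, the unique top singular vector of $\xi_n$ — whose top singular value has multiplicity one because $\xi_n$ is $(\theta_0/2)$-close to the rank-one object $q_n\otimes\bar q_n$ — so it may be chosen independent of $g$: we get $\eta_n\in q_nM_n$ with $\|\eta_n\|_{2,M_n}=\|q_n\|_{2,M_n}$, $\sup_n\|\eta_n\|_{\mathrm{op}}<\infty$, and $|\langle\pi_n(g)\eta_n\rho_n(g)^*,\eta_n\rangle_{2,M_n}|/\|q_n\|_{2,M_n}^2\to_n 1$ for every $g$. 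Since $\pi_n(g),\rho_n(g)$ are unitary, $\|\pi_n(g)\eta_n\rho_n(g)^*\|_{2,M_n}=\|\eta_n\|_{2,M_n}$, so the equality case of Cauchy--Schwarz yields scalars $\zeta_{n,g}\in\T$ with $\|\pi_n(g)\eta_n-\zeta_{n,g}\eta_n\rho_n(g)\|_{2,M_n}/\|q_n\|_{2,M_n}\to_n 0$ for all $g$. Finally, evaluating $\pi_n(gh)\eta_n$ two ways — directly it is $\approx\zeta_{n,gh}\eta_n\rho_n(gh)$, while using $\pi_n(g)\pi_n(h)\approx c(g,h)\pi_n(gh)$, the relations for $g$ and $h$, and $\rho_n(g)\rho_n(h)\approx\rho_n(gh)$ (all multiplied through using $\sup_n\|\eta_n\|_{\mathrm{op}}<\infty$) it is $\approx\overline{c(g,h)}\zeta_{n,g}\zeta_{n,h}\eta_n\rho_n(gh)$ — and since $\|\eta_n\rho_n(gh)\|_{2,M_n}=\|q_n\|_{2,M_n}$, comparison gives $|\zeta_{n,gh}-\overline{c(g,h)}\zeta_{n,g}\zeta_{n,h}|\to_n 0$. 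Putting $\mu(g):=\ultlim{n}\zeta_{n,g}\in\T$ we conclude $c(g,h)=\mu(g)\mu(h)\mu(gh)^{-1}$ for all $g,h\in\Gamma$, so $c$ is a $2$-coboundary.

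The main obstacle is the Connes-trick step: the intertwining vector $\eta_n$ must be chosen uniformly in the test element $g$ and with bounded operator norm, both of which rest on the top singular value of $\xi_n$ having multiplicity one — guaranteed by the a priori closeness $\|\xi_n-q_n\otimes\bar q_n\|_2\le(\theta_0/2)\|q_n\otimes\bar q_n\|_2$ delivered by Proposition~\ref{prop:rigidity_almost_rep}. Conceptually, the tensor-square device together with Proposition~\ref{prop:rigidity_almost_rep} (where Hilbert--Schmidt stability is invoked) are the essential inputs, while the remaining mixed $\|\cdot\|_2$/operator-norm bookkeeping is routine.
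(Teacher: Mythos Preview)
Your proof is correct and follows essentially the same route as the paper's: pass to tensor squares to kill the cocycle, apply Proposition~\ref{prop:rigidity_almost_rep} to obtain the almost-intertwiner $\xi_n$, feed it into Corollary~\ref{cor:Connes_trick_vN} to get the single-copy vector $\eta_n$, and read off the trivializing $1$-cochain from the resulting phases. The only cosmetic differences are that the paper applies Corollary~\ref{cor:Connes_trick_vN} once with the full (infinite) collection $\{(\pi_n(g),\rho_n(g))\}_{g\in\Gamma}$ rather than arguing $\eta_n$ is $g$-independent via uniqueness of the top singular vector, and it extracts the limit cochain by compactness of $\T^\Gamma$ rather than via an ultralimit.
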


    \begin{proof}
        Pick $\epsilon$ according $\delta := \theta_0$ in Proposition \ref{prop:rigidity_almost_rep}. 
        Let $\pi_n:\Gamma \to U(q_nM_nq_n)$, $\rho_n: \Gamma \to U(M_n)$ be given as in the statement of the theorem, with $\| \pi_n(s) - q_n\rho_n(s) q_n\|_{2,M_n} / \|q_n \|_{2,M_n}< \epsilon / 2$ for $s \in S$. Let $c\in Z^2(\Gamma, \T)$ be the cocycle associated with $\pi_n$, so that:
    $$\lim_n \| \pi_n(g)\pi_n(h) - c(g,h) \pi_n(gh)\|_{2,M_n} / \| q_n \|_{2,M_n} = 0 \text{ for all $g,h \in \Gamma$.}$$
         %To show that $c$ is in $B^2(\Gamma, \T)$, it is enough to show that for every $\epsilon>0$ and every finite subset $F \subset \Gamma$, there exists a 1-cochain $\lambda: \Gamma \to \T$ such that $|c(g,h) - \lambda(g)^{-1}\lambda(h)^{-1} \lambda(gh)| < \epsilon$, since the set of $2$-coboundaries $B^2(\Gamma,\T)$ is closed in the set of $2$-cocycles $Z^2(\Gamma,\T)$ in the pointwise convergence topology.
        %Let $q_n \in M_{n}(\C)$ be a sequence of projections such that $q = (q_n)_n$.
        %Note that $M = q \cN q$ with the renormalized trace $\tau_M= \tau_\cN/\tau_\cN(q)$ is isomorphic to $\prod_{n \to \omega} q_n M_{n}(\C)q_n$, where the ultraproduct on the right is of $q_n M_{n}(\C) q_n \simeq M_{\dim q_n}(\C)$ with the normalized traces on them.
        %Let $\pi_n: \Gamma \to U(q_nM_{n}q_n), \rho_n: \Gamma \to U(M_{n})$ be maps such that $\pi(g) = (\pi_n(g))_n, \rho(g) = (\rho_n(g))_n$.
        %Then we have for all $g,h \in \Gamma$ that 
        %begin{align*}
        %   \| \pi_n(gh) - c(g,h)\pi_n(g)\pi_n(h) \|_{2,M_n} /\| q_n\|_{2, M_n} \to 0 \text{ and } \| \rho_n(gh) - \rho_n(g)\rho_n(h) \|_{2,M_n} \to0,
        %end{align*}
    
        Now we can consider $\widetilde{M}_n = M_n \otimes \overline{M}_n$, $\tilde q_n = q_n\otimes \bar q_n$,  $\tilde \pi_n = \pi_n\otimes \overline{\pi_n}: \Gamma \to U(\tilde q_n \cdot \widetilde M_{n} \cdot \tilde q_n)$ and $\tilde \rho_n = \rho_n \otimes \overline{\rho_n}: \Gamma \to U(\widetilde M_{n})$. We claim that $\tilde \pi_n, \tilde \rho_n$ are asymptotic representations (with respect to the normalized Hilbert-Schmidt norms on $\tilde q_n \widetilde M_n \tilde q_n$, $\widetilde M_n$ respectively).
        Indeed, we have for all $g,h \in \Gamma$:
        \begin{align*}
             \| \tilde\pi_n(g)\tilde \pi_n(h) &-  \tilde\pi_n(gh)\|_{2, \widetilde{M}_n} / \| \tilde q_n \|_{2,\widetilde{M}_n} \\
             &\leq   \|(\pi_n(g) \pi_n(h) - c(g,h)\pi_n(gh))\otimes \overline{\pi_n(g) \pi_n(h)} \|_{2,\widetilde{M}_n} / \| \tilde q_n \|_{2,\widetilde{M}_n}\\
             &+   \| \pi_n(gh)\otimes \overline{(c(g,h)^{-1}\pi_n(g) \pi_n(h) - \pi_n(gh))} \|_{2,\widetilde{M}_n} / \| \tilde q_n \|_{2,\widetilde{M}_n}\\
             &\leq 2\|\pi_n(g) \pi_n(h) - c(g,h)\pi_n(gh) \|_{2, M_n} / \| q_n \|_{2,M_n}
        \end{align*}
        Since the right most bound converges to $0$, we see that $\tilde \pi_n$ is an asymptotic representation. The same calculation (with the trivial cocycle) shows that $\tilde \rho_n$ is an asymptotic representation.
        %It can be shown that both of these are asymptotics representations with respect to the corresponding $2$-norms. Choose $\delta_0$ according t o $\epsilon$ in Proposition \ref{prop:rigidity_almost_rep}.
        Further, note for all $s \in S$:
        \begin{align*}
            \| \tilde \pi_n(s) -& \tilde q_n \tilde \rho_n(s) \tilde q_n\|_{2, \widetilde M_n} / \| \tilde q_n \|_{2,\widetilde M_n} \\
            &\leq \| (\pi_n(s) - q_n \rho_n(s) q_n) \otimes \overline{\pi_n(s)} \|_{2,\widetilde{M}_n} / \| \tilde q_n\|_{2, \widetilde{M}_n} \\
            &+ \| (q_n \rho_n(s) q_n) \otimes \overline{(\pi_n(s) -q_n \rho_n(s) q_n)} \|_{2,\widetilde{M}_n} / \| \tilde q_n\|_{2, \widetilde{M}_n} \\
            &\leq2\| \pi_n(s) - q_n\rho_n(s) q_n\|_{2,M_n} / \|q_n \|_{2,M_n},
        \end{align*}
        the right most term is smaller than $\epsilon$ for almost every $n$. As such, by Proposition \ref{prop:rigidity_almost_rep}  there exists unit vectors $\xi_n \in \tilde q_n \widetilde{M}_n$ of uniformly bounded operator norm such that $\| \xi_n - \tilde q_n \|_{2, \widetilde{M}_n} / \| \tilde q_n\|_{2, \widetilde{M}_n} < \delta$, and$\|\tilde \pi_n(g)\xi_n \tilde \rho_n(g)^* - \xi_n \|_{2,\widetilde M_n} / \| \tilde q_n \|_{2,\widetilde M_n} \to_{n} 0$ for all $g \in \Gamma$.
          %Let $\cH_n = q_n L^2(M_n)$. Note that we can identify  $\tilde q_nL^2( \widetilde{M}_n) = (q_n \otimes \bar q_n)(L^2(M_n \otimes \overline{M}_n)) \simeq \cH _n\otimes \overline \cH_n \simeq HS(\cH_n)$ as Hilbert-spaces.
           %Let $U_n(g)( \zeta) = \pi_n(g) \zeta \rho_n(g)$ for $\zeta \in \cH$, then $U_n(g) \in  U(q_nL^2(M_n))$ is a unitary for all $g \in \Gamma$.
           %Further, the above properties under the isomorphism now reads $\| U_n(g) \xi_n U_n(g)^* - \xi_n\|_F \to _n 0$ for all $g \in \Gamma$ and $\| \xi_n - q_n \otimes \bar q_n \|_F < \delta$ for almost every $n$.
           Consequently, for each $n$ large enough we can use Corollary \ref{cor:Connes_trick_vN} (with $u_i, v_i$ being the unitaries $\pi_n(g)$, $\rho_n(g)$, $g\in \Gamma$ respectively)  to find a sequence of unit vectors $\eta_n \in q_nM_n$, for which $\| \eta_n \|_{\op} \leq 2 \|\xi_n \|_{\op} / \| q_n \|_{2,M_n}$ and for all $g \in \Gamma$:
           $$|\langle \pi_n(g) \eta_n\rho_n(g)^*, \eta_n \rangle_{2,M_n} / \|q_n \|_{2,M_n}^2|  \to _n 1.$$ 
           Note that since $\liminf_{n} \| q_n \|_{2,M_n} > 0$, we also have that $\eta_n$ is uniformly bounded in operator norm.
           %\admid{Problem: we might want $\eta_n$ to be a sequence bounded in operator norm, so we can apply the following estimates maye use Connes' idea in the proof of spectral gap and gamma?}
    %Let $\cH_n = q_n L^2(M_n)$, so we have for all $g$ and almost every $n$:
        %$$|\langle \pi_n(g) q_n \rho_n(g)^*, q_n \rangle|^2 \geq 1- \epsilon. $$
        For every $g \in \Gamma$, set $\lambda_n(g) \in \T$ to  be the unique scalar such that such that $\langle \pi_n(g) \eta_n\rho_n(g)^*, \lambda_n(g)\eta_n \rangle_{2,M_n} > 0$.
        Then for every $g \in \Gamma$, $\| \pi_n(g)\eta_n \rho_n(g)^* - \lambda_n(g)\eta_n \|_{2,M_n} / \| q_n\|_{2,M_n} \to_n 0$.
        Let $C$ be a uniform bound on the operator norm of $\eta_n$,  and note that we have:
        \begin{align*}
            \| c(g,h)\pi_n(g)\pi_n(h)&\eta_n \rho_n(h)^*\rho_n(g)^* - \pi_n(gh) \eta_n \rho_n(gh)^*\|_{2,M_n} \\
            & \leq \|(c(g,h)\pi_n(g)\pi_n(h) - \pi_n(gh)) \eta_n \rho_n(h)^*\rho_n(g)^* \|_{2,M_n} \\
            & \; + \| \pi_n(gh) \eta_n (\rho_n(h)^*\rho_n(g)^* -\rho_n(gh)^*) \|_{2,M_n}\\
            & \leq C\|c(g,h)\pi_n(g)\pi_n(h) - \pi_n(gh)\|_{2,M_n} \\
            & \; + C\|  \rho_n(h)^*\rho_n(g)^* -\rho_n(gh)^* \|_{2,M_n}.
        \end{align*}
        After normalizing the above inequality by $\| q_n \|_{2,M_n}$, we see the rightmost term converges to $0$. Thus we  have $\| c(g,h)\pi_n(g)\pi_n(h)\eta_n \rho_n(h)^*\rho_n(g)^* - \lambda_n(gh) \eta_n \|_{2,M_n} / \| q_n \|_{2,M_n} \to_n 0$.
        Further, we have for all $g,h \in \Gamma$:
        \begin{align*}
             \| c(g,h)\lambda_n(h) &\lambda_n(g)\eta_n -\lambda_n(gh)\eta_n\|_{2,M_n}\\
             &\leq \| c(g,h)\lambda_n(h) (\pi_n(g) \eta_n \rho_n(g)^* - \lambda_n(g)\eta_n)\|_{2,M_n}\\ 
             &\;+ \| c(g,h) (\pi_n(g) (\pi_n(h) \eta_n \rho_n(h)^* - \lambda_n(h)\eta_n) \rho_n(g)^*)\|_{2,M_n}\\ 
            &+ \| c(g,h)\pi_n(g)\pi_n(h)\eta_n \rho_n(h)^*\rho_n(g)^* -\lambda_n(gh)\eta_n\|_{2,M_n}\\
            &\leq \|\pi_n(g) \eta_n \rho_n(g)^* - \lambda_n(g)\eta_n \|_{2,M_n}\\
            &\;+\|\pi_n(h) \eta_n \rho_n(h)^* - \lambda_n(h)\eta_n \|_{2,M_n}\\
            &+ \| c(g,h)\pi_n(g)\pi_n(h)\eta_n \rho_n(h)^*\rho_n(g)^* -\lambda_n(gh)\eta_n\|_{2,M_n}.\\
        \end{align*} 
        Again, after normalizing the inequality by $\| q_n \|_{2,M_n}$, we see the rightmost term converges to $0$. Since $\| \eta_n \|_{2,M_n} / \| q_n\|_{2,M_n} = 1$, we have that for every $g,h \in \Gamma$, $\lim_n \lambda_n(g)^{-1}\lambda_n(h)^{-1} \lambda_n(gh) = c(g,h)$. 
        Since $\T^\Gamma$ is compact, we can take a partial pointwise limit $\lambda: \Gamma \to \T$ of $\lambda_n$.
        It then follows that $\lambda(g) \lambda(h) \lambda(gh)^{-1} = c(g,h)$ for all $g,h \in \Gamma$, so $c$ is a $2$-coboundary.
        %Since the set of $2$-coboundaries $B^2(\Gamma,\T)$ is closed in the set of $2$-cocycles $Z^2(\Gamma,\T)$ in the pointwise convergence topology, we obtain that $c$ is a coboundary.
        \end{proof}
\begin{remark}\label{rem:asymp_rig_ult}
    In the statement of Theorem \ref{thm:NPS_analog}, Definition \ref{def:asym_proj} and the propositions that lead up to it, we could have replaced all limits (and statements regarding almost every $n \in \N$) with ultralimits with respect $\omega$ (and statements regarding $\omega$-almost every $n$), and the proofs would remain the same.
\end{remark}

For a countable group $\Gamma$ and a $2$-cocycle $c: \Gamma \times \Gamma \to \T$, one associates a \emph{twisted group von Neumann algebra} $L_c(\Gamma)$ defined as follows:
For each $g \in \Gamma$ let $u_g \in U(\ell^2(\Gamma))$ be the unitary defined by $u_g \cdot \delta_h = c(g,h) \delta_{gh}$.
It then follows that $g \mapsto u_g$ is a projective unitary representation with associated cocycle $c$, that is $u_g u_h = c(g,h) u_{gh}$ for all $g,h \in \Gamma$. We then set $L_c(\Gamma) = \{ u_g | \; g\in \Gamma\}'' \subset B(\ell^2(\Gamma))$, see \cite[Section 2.1]{Dog} for more on these algebras. 

We can now prove the following Theorem analogous to \cite[Theorem 3.1]{Dog}, the crucial difference being that we do not assume property (T), but merely property (T;FD).
\begin{theorem} \label{thm:Ioana_analog}
Let $\Gamma$ be a finitely generated group with property (T;FD). Assume there exists a sequence of $2$-cocycles 
$c_n \in Z^2(\Gamma, \T)$ with the following properties:

\begin{enumerate}
    \item For each $n$, the cohomology class $[c_n] \in H^2(\Gamma, \T)$ is nontrivial.
    \item For any $g,h \in \Gamma$, $c_n(g,h) \xrightarrow[n \to \infty]{} 1$.
    \item For every $n$, the twisted group von Neumann algebra $L_{c_n}\Gamma$ is Connes embeddable.
\end{enumerate}
Then $\Gamma$ is not Hilbert--Schmidt stable.
\end{theorem}

\begin{proof}
Assume by contradiction that $\Gamma = \langle S\rangle$, $S = \{ s_1, \dots, s_k\}$ is Hilbert--Schmidt stable. As $\Gamma$ has asymptotic property (T;FD), we can choose $\epsilon$ as promised in Theorem \ref{thm:NPS_analog}.
Choose a presentation $\Gamma = \langle S \vert R \rangle$, $R \subset \mathbb F_{S}$ a set of words in the free group on $S$.
For $g \in \Gamma$ and $n \in \mathbb N$, denote by $u_{g,n} \in L_{c_n} \Gamma$ the corresponding canonical unitary that satisfies:

$$ u_{g,n} u_{h,n} = c_n(g,h) u_{gh, n} \text{ for all $g,h \in \Gamma$} $$
By assumption, for every $n \in \mathbb N$, there is a trace-preserving embedding of $L_{c_n} \Gamma$ in $M = \prod_{m \to \omega} M_m(\C)$. 
To simplify notation, we fix such an embedding and simply think of $L_{c_n} \Gamma \subset \mathcal M$ sitting as a subalgebra.

For fixed $g$ and $n$, we choose a representative sequence $u_{g,n} = \{ u_{g,n} (m) \}_{m}$ where $u_{g,n}(m) \in U(m)$
(we can always do so, as unitaries in the ultraproduct lift to unitaries in the product, see for example \cite[Lemma 2.2]{zbMATH07469105}). 
Note that for a fixed $n$, the maps $\{ g\in \Gamma \mapsto u_{g,n}(m) \}_m$ are an asymptotically projective representation of $\Gamma$ (as in definition \ref{def:asym_proj}, see also Remark \ref{rem:asymp_rig_ult}), whose associated cocycle is $c_n$.
Indeed, for every $g,h \in \Gamma$, we have:
\begin{align*}
 \ultlim{m}&\| u_{g,n}(m) u_{h,n}(m) - c_n(g,h) u_{gh, n}(m) \|_{2,M_m(\C)} \\
 &= \| u_{g,n} u_{h,n} - c_n(g,h) u_{gh, n}\|_{2,M} =0.
\end{align*}
By Hilbert--Schmidt stability, there exists $\delta > 0$, and a finite set $R_0 \subset R$
such that if $\varphi: S \to U(m)$ satisfies $ \Vert r(\varphi(s_1), \dots, \varphi(s_k)) - 1 \Vert_{2, M_m(\C)} < \delta \text{ for all $r \in R_0$}$, 
then there exists a representation $\rho: \Gamma \to U(2m)$ such that $ \Vert \varphi(s_i) - q_m \rho(s_i) q_m \Vert_{2, M_m(\C)} < \epsilon$ for all $i$, 
where $q_m \in M_{2m}(\C)$ is the projection onto the first $m$ coordinates.
Let us denote by $M_m = M_{2m}(\C)$, so that $\| q_m \|_{2, M_m} = 1/2$ for all $m$, in particular, $\liminf \|q_m \|_{2,M_m} > 0$.

Denote by $L$ the maximal length of a word appearing in $R_0$. Choose and fix $n$ large enough such that 
$\vert c_n(g, h) - 1 \vert < \frac{\delta}{2(L+1)}$ for all $g,h$ in the ball of radius $L$ in the Cayley graph $\text{Cay}(\Gamma, S)$.
As a result, we have for all $r \in R_0$:
\begin{align*}
 \Vert r(u_{s_1,n}, \dots, u_{s_k, n}) - 1_{\mathcal M} \Vert_{2, \mathcal M} &= \Vert C u_{r(s_1, \dots, s_k), n} - 1_{\mathcal M} \Vert_{2, \mathcal M}\\
 &= \Vert C \cdot 1_{\mathcal M} - 1_{\mathcal M} \Vert_{2, \mathcal M} < \frac{\delta}{2},
\end{align*}
where $C$ is a product of $L+1$ instances of $c_n(g,h)$ for some $g,h$ in the ball of radius $L$ in $\text{Cay}(\Gamma, S)$. Thus, for $\omega$-almost every $m$, we have

$$ \Vert r(u_{s_1,n}(m), \dots, u_{s_k, n}(m)) - 1 \Vert_{2, M_m(\C)} < \delta.$$
As such, we can choose representations $\rho_{n,m}: \Gamma \to U(M_m)$ such that for $\omega$-almost every $m \in \N$:
\begin{align*}
    %\Vert q_m \rho_{n,m}(s_i) q_m &- u_{s_i, n}(m) \Vert_{2, M_m(\C)} \\
    \Vert q_m \rho_{n,m}(s_i) q_m - u_{s_i, n}(m) \Vert_{2, M_m} / \| q_m\|_{2,M_m} < \epsilon 
\end{align*}
Since $\{ g\in \Gamma \mapsto u_{g,n}(m) \}_m$ is an asymptotically projective representation, and $\rho_{n,m}$ are representations, we conclude by Theorem \ref{thm:NPS_analog}  (see also remark \ref{rem:asymp_rig_ult}) that the cocycle $c_n$ is in fact a $2$-coboundary, contradicting our assumptions.
\end{proof}

\subsection{Hyperlinearity of central extensions and asymptotically projective representations}

To finish our proof, we will use the following Lemma of A. Thom \cite[Lemma 3.3]{Thom} connecting the hyperlinearity of central extensions to Connes embeddability of the arising twisted group von Neumann algebras.

\begin{lemma}[cf. Lemma 3.3 in \cite{Thom}] \label{lem:Thom}
Let $A$ be a countable abelian group, $\Gamma$ a countable group. Let  $\alpha \in Z^2(\Gamma, A)$ a cocycle, and consider the  central extension arising from it
\begin{center}
\begin{tikzcd}
0 \arrow[r] & A \arrow[r] & \widetilde{\Gamma} \arrow[r] & \Gamma \arrow[r] & 0.
\end{tikzcd}
\end{center}
Then $\widetilde \Gamma$ is hyperlinear iff for every character $\chi \in \widehat{A}$ the twisted group von Neumann algebra $L_{\chi \circ \alpha}\Gamma$ is Connes embeddable.
\end{lemma}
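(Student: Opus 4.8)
The plan is to study $L\widetilde\Gamma$ through the central subalgebra $LA\subseteq Z(L\widetilde\Gamma)$, transferring information between $L\widetilde\Gamma$ and the twisted algebras $L_{\chi\circ\alpha}\Gamma$ at the level of traces. Fix a normalized section, so that $\widetilde\Gamma=A\times\Gamma$ as a set with product $(a,g)(b,h)=(a+b+\alpha(g,h),gh)$ and $A\hookrightarrow Z(\widetilde\Gamma)$ as $\{(a,e)\}$. For $\chi\in\widehat A$ let $\varphi_\chi\in\Tr{\widetilde\Gamma}$ be $\varphi_\chi(a,g)=\chi(a)\delta_{g,e}$; a direct GNS computation (the vector $(a,g)$ collapses to $\chi(a)(0,g)$) shows the associated tracial representation is $(a,g)\mapsto\chi(a)u_g^{\chi\circ\alpha}$, hence $M_{\varphi_\chi}\cong L_{\chi\circ\alpha}\Gamma$. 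Thus ``$L_{\chi\circ\alpha}\Gamma$ is Connes embeddable'' means exactly ``$\varphi_\chi\in\mathrm{CE}(\widetilde\Gamma)$'', where $\mathrm{CE}(\Lambda)\subseteq\Tr{\Lambda}$ denotes the set of traces with Connes-embeddable GNS algebra; similarly $\widetilde\Gamma$ is hyperlinear iff the canonical trace $\delta_e$ lies in $\mathrm{CE}(\widetilde\Gamma)$. Inside $L\widetilde\Gamma$ the subalgebra $LA$ is central, its trace is that induced by the left regular representation of $A$, so Plancherel duality identifies $(LA,\tau)$ with $(L^\infty(\widehat A),\mathrm{Haar})$ with $u_{(a,e)}$ corresponding to $\chi\mapsto\chi(a)$, and the conditional expectation $\mathbb{E}_{LA}$ annihilates $u_{(0,g)}$ for $g\ne e$. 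I will also use two standard facts: $\mathrm{CE}(\Lambda)$ is closed in $\Tr{\Lambda}$ (given $\varphi_k\to\varphi$ with each $\varphi_k$ realized by an asymptotic representation, a diagonal argument along a countable exhaustion of $\Lambda$ builds a single asymptotic representation realizing $\varphi$) and convex (a convex combination of traces realized by maps $\rho_{i,m}$ is realized by $\bigoplus_i\rho_{i,m}^{\oplus k_i(m)}$ with multiplicities tuned so the dimension ratios approach the coefficients); and that a corner $p\mathcal{R}^\omega p$ by a nonzero projection is Connes embeddable.

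For the forward direction, assume $\widetilde\Gamma$ is hyperlinear, fix a trace-preserving embedding $L\widetilde\Gamma\hookrightarrow\mathcal{R}^\omega$ with induced homomorphism $\pi_\omega\colon\widetilde\Gamma\to U(\mathcal{R}^\omega)$, and fix $\chi_0\in\widehat A$. For open $V\ni\chi_0$ let $p_V\in LA$ be the spectral projection of $V$, so $\tau(p_V)=\mathrm{Haar}(V)>0$. Since $LA$ is central in $L\widetilde\Gamma$, the assignment $\psi_V(a,g)=\pi_\omega(a,g)p_V$ is a group homomorphism $\widetilde\Gamma\to U(p_VL\widetilde\Gamma p_V)$, and $p_VL\widetilde\Gamma p_V\subseteq p_V\mathcal{R}^\omega p_V$ is Connes embeddable; hence the trace $\widetilde\varphi_V:=\tfrac1{\tau(p_V)}\tau(p_V\psi_V(\,\cdot\,))$ lies in $\mathrm{CE}(\widetilde\Gamma)$. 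Factoring $\pi_\omega(a,g)=\pi_\omega(a,e)\pi_\omega(0,g)$ and applying $\mathbb{E}_{LA}$ together with the identification $LA\cong L^\infty(\widehat A)$ gives $\widetilde\varphi_V(a,g)=\delta_{g,e}\cdot\tfrac1{\mathrm{Haar}(V)}\int_V\chi(a)\,d\mathrm{Haar}(\chi)$, which tends to $\chi_0(a)\delta_{g,e}=\varphi_{\chi_0}(a,g)$ as $V$ shrinks to $\{\chi_0\}$, by continuity of $\chi\mapsto\chi(a)$. Choosing $V_k\downarrow\{\chi_0\}$ --- possible as $\widehat A$ is compact metrizable --- we get $\widetilde\varphi_{V_k}\to\varphi_{\chi_0}$, so $\varphi_{\chi_0}\in\overline{\mathrm{CE}(\widetilde\Gamma)}=\mathrm{CE}(\widetilde\Gamma)$, i.e.\ $L_{\chi_0\circ\alpha}\Gamma$ is Connes embeddable.

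For the converse, assume every $L_{\chi\circ\alpha}\Gamma$ is Connes embeddable, i.e.\ $\varphi_\chi\in\mathrm{CE}(\widetilde\Gamma)$ for all $\chi$. The canonical trace of $\widetilde\Gamma$ satisfies $\delta_e(a,g)=\delta_{a,0}\delta_{g,e}=\bigl(\int_{\widehat A}\chi(a)\,d\mathrm{Haar}(\chi)\bigr)\delta_{g,e}=\int_{\widehat A}\varphi_\chi(a,g)\,d\mathrm{Haar}(\chi)$, so $\delta_e$ is the barycenter of the pushforward of Haar measure under the weak-$*$ continuous map $\chi\mapsto\varphi_\chi\in\Tr{\widetilde\Gamma}$, a measure supported on the compact set $K=\{\varphi_\chi:\chi\in\widehat A\}$. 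As $\mathrm{CE}(\widetilde\Gamma)$ is closed and convex and contains $K$, it contains $\overline{\mathrm{conv}}(K)$ and therefore the barycenter $\delta_e$. Hence $L\widetilde\Gamma=M_{\delta_e}$ is Connes embeddable and $\widetilde\Gamma$ is hyperlinear.

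The work here is bookkeeping rather than a single hard idea; the delicate point is the forward direction, where one must ``evaluate'' a direct-integral-type decomposition of $L\widetilde\Gamma$ at the measure-zero point $\chi_0$ without destroying the trace --- which is precisely why one cuts by the shrinking projections $p_V$ and then leans on the closedness of $\mathrm{CE}(\widetilde\Gamma)$ --- and where centrality of $A$ is essential, as it is what makes $LA$ central in $L\widetilde\Gamma$ and the conditional expectation $\mathbb{E}_{LA}$ available for the clean computation of $\widetilde\varphi_V$. The two folklore facts that $\mathrm{CE}$ is closed and convex, on which everything rests, should be proved rather than merely cited.
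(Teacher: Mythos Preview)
The paper does not give its own proof of this lemma; it is simply cited as Lemma~3.3 in \cite{Thom}. Your argument is correct and stands on its own.

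A few remarks. Your strategy---working inside the trace simplex of $\widetilde\Gamma$, identifying $L_{\chi\circ\alpha}\Gamma$ as the GNS algebra of $\varphi_\chi(a,g)=\chi(a)\delta_{g,e}$, and then using that $\mathrm{CE}(\widetilde\Gamma)$ is closed and convex---is clean and conceptually natural. The forward direction, where you localize at $\chi_0$ by cutting down along central spectral projections $p_V$ and then pass to a limit, is exactly the right way around the measure-zero issue, and your computation of $\widetilde\varphi_V$ via $\mathbb{E}_{LA}$ is correct. The converse is the easier half and your barycenter argument is fine; one could also argue it more concretely by noting that $L\widetilde\Gamma$ embeds (trace-preservingly) into the direct integral $\int_{\widehat A}^{\oplus} L_{\chi\circ\alpha}\Gamma\, d\mathrm{Haar}(\chi)$, and that direct integrals of Connes-embeddable algebras are Connes-embeddable---but this is essentially the same content.

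Your closing self-criticism is on point: the two facts that $\mathrm{CE}(\Lambda)$ is closed and convex in $\Tr{\Lambda}$ deserve a line of proof each. For convexity the cleanest route is that $M_{t\varphi_1+(1-t)\varphi_2}$ embeds in $(M_{\varphi_1}\oplus M_{\varphi_2},\, t\tau_1\oplus(1-t)\tau_2)$, and direct sums of Connes-embeddable algebras are Connes-embeddable. For closedness, your diagonal argument sketch is correct; alternatively, one can observe that $\varphi\in\mathrm{CE}(\Lambda)$ is a condition that only depends on the moments of $\varphi$ and is witnessed by finite-dimensional data at each tolerance, so the set is closed under pointwise limits.
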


We will also need the following coefficient change result:

%   \begin{proposition}[See \cite[Proposition 7.2]{Dog}] \label{prop:coeff_change}
%   Let $A$ be a countable torsion-free abelian group and let $\Gamma$ be a countable group. Given a nontrivial cocycle $\alpha \in Z^2(\Gamma, A)$, there exists a character $\chi \in \widehat{A}$ such that $\chi \circ \alpha \in Z^2(\Gamma, S^1)$ is nontrivial
%   \end{proposition}
\begin{proposition}[cf. Proposition 7.2 in \cite{Dog}] \label{prop:coeff_change}
Let $\Gamma$ be a countable group. Let $c \in Z^2(\Gamma, \mathbb Z)$ be a cocycle and consider the central extension arising from it
\begin{center}
\begin{tikzcd}
1 \arrow[r] & \mathbb Z \arrow[r] & \widetilde{\Gamma} \arrow[r] & \Gamma \arrow[r] & 1.
\end{tikzcd}
\end{center}
If $\widetilde{\Gamma}$ has finite abelianization, then there exists a character $\chi \in \widehat{\mathbb Z}$ such that
$\chi \circ c \in Z^2(\Gamma, \mathbb{T})$ is not a coboundary.
\end{proposition}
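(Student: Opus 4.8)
The plan is to push the coefficient group $\Z$ into the exact sequence $0\to\Z\xrightarrow{\iota}\R\xrightarrow{p}\T\to 0$ and reduce everything to the statement that the image of $[c]$ in $H^2(\Gamma,\R)$ is nonzero. First I would identify $\widehat{\Z}$ with $\T$ via $\chi\mapsto\chi(1)$ and write a character as $\chi_\theta(n)=e^{2\pi i n\theta}$ for $\theta\in\R$. Then $\chi_\theta\circ c = p\circ(\theta c)$, where $\theta c\in Z^2(\Gamma,\R)$ is the cocycle $(g,h)\mapsto\theta\,c(g,h)$, and its class in $H^2(\Gamma,\R)$ equals $\theta\cdot\iota_*[c]$. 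Consequently $[\chi_\theta\circ c]=p_*\big(\theta\cdot\iota_*[c]\big)$ in $H^2(\Gamma,\T)$, and by exactness of the long exact sequence in group cohomology this class vanishes (i.e.\ $\chi_\theta\circ c$ is a coboundary) if and only if $\theta\cdot\iota_*[c]$ lies in $\operatorname{im}\big(\iota_*\colon H^2(\Gamma,\Z)\to H^2(\Gamma,\R)\big)$.

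Granting $v:=\iota_*[c]\neq 0$, I would produce $\theta$ with $\theta v\notin\operatorname{im}(\iota_*)$. The natural evaluation-against-homology map $H^2(\Gamma,\R)\to\Hom(H_2(\Gamma,\Z),\R)$ is injective, since its kernel is $\operatorname{Ext}(H_1(\Gamma,\Z),\R)=0$ ($\R$ being divisible), and it is natural in the coefficient group; hence, viewed inside $\Hom(H_2(\Gamma,\Z),\R)$, the subgroup $\operatorname{im}(\iota_*)$ consists only of $\Z$-valued homomorphisms. Since $v\neq 0$ it evaluates nontrivially against some class $x\in H_2(\Gamma,\Z)$, and choosing $\theta$ with $\theta\langle v,x\rangle\notin\Z$ makes $\theta v$ not $\Z$-valued, hence $\theta v\notin\operatorname{im}(\iota_*)$, so $\chi_\theta\circ c$ is not a coboundary. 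The only care here is bookkeeping with cocycle normalizations and the naturality of the evaluation pairing.

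The crux is therefore to prove $\iota_*[c]\neq 0$ from the hypothesis that $\widetilde\Gamma$ has finite abelianization. Suppose not. Replacing $c$ by a cohomologous normalized representative (which changes neither $\widetilde\Gamma$ up to isomorphism nor whether $\chi\circ c$ is a coboundary), write $c$ as a real coboundary, $c(g,h)=b(g)+b(h)-b(gh)$ with $b\colon\Gamma\to\R$, $b(e)=0$; realizing $\widetilde\Gamma$ as $\Z\times\Gamma$ with product $(m,g)(m',g')=(m+m'+c(g,g'),gg')$, a direct check with the cocycle relation shows that $\phi(m,g):=m+b(g)$ defines a group homomorphism $\phi\colon\widetilde\Gamma\to\R$ whose restriction to the central $\Z$ is the identity $m\mapsto m$, in particular injective on the center. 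On the other hand, finiteness of $\widetilde\Gamma^{\mathrm{ab}}$ forces the central generator $z$ to have some finite order $m\geq 1$ in $\widetilde\Gamma^{\mathrm{ab}}$, so $z^m\in[\widetilde\Gamma,\widetilde\Gamma]$; applying $\phi$ and using that $\R$ is abelian (so $\phi$ kills commutators) gives $m=\phi(z^m)=0$, a contradiction. Hence $\iota_*[c]\neq 0$, and together with the previous two paragraphs this yields the desired $\chi=\chi_\theta\in\widehat{\Z}$. I expect the genuinely delicate point to be nothing deeper than verifying that $\phi$ is well defined and multiplicative; the rest is formal homological algebra and the exact-sequence reduction.
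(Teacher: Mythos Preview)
Your proof is correct. The paper itself does not give an argument: it cites Proposition~7.2 of \cite{Dog} and simply observes that the property~(T) hypothesis there is used only via the finiteness of $\widetilde{\Gamma}^{\mathrm{ab}}$. Your route --- reducing via the long exact sequence for $0\to\Z\to\R\to\T\to 0$ to the nonvanishing $\iota_*[c]\neq 0$ in $H^2(\Gamma,\R)$, identifying the image of $\iota_*$ with integer-valued functionals on $H_2(\Gamma,\Z)$ through universal coefficients, and then deducing $\iota_*[c]\neq 0$ by exhibiting a homomorphism $\widetilde{\Gamma}\to\R$ that restricts to the identity on the central $\Z$ --- is a clean, self-contained realization of that cited argument. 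One minor cosmetic point: you reuse the symbol $m$ both for the $\Z$-coordinate of elements $(m,g)\in\widetilde{\Gamma}$ and for the order of $z$ in $\widetilde{\Gamma}^{\mathrm{ab}}$; renaming the latter avoids confusion.
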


Note that in \cite{Dog}, the above proposition appears with an assumption of property (T) for $\widetilde{\Gamma}$.
However, it is easy to see that the proof only uses the fact that a countable property (T) group has finite abelianization.
We can now finally prove Theorem \ref{thm:main-non-hyperlinear} with the relevant tools in place.

\begin{proof}[Proof of Theorem \ref{thm:main-non-hyperlinear}]
Let us denote by $\alpha \in Z^2(\Gamma, \Z)$ the cocycle corresponding to the given central extension $\widetilde{\Gamma}$.
Assume that for all $N \in \N$, the group $\widetilde{\Gamma}_N=\widetilde{\Gamma} / (N\cdot \Z)$ is hyperlinear.
Since hyperlinearity is closed for projective limits of groups \cite[Corollary 7.5.9]{CS-C}, it follows that $\widetilde{\Gamma}$ is hyperlinear as well.
Thus, by Lemma \ref{lem:Thom} we have for each $\chi \in \widehat{\Z}$ that $L_{\chi \circ \alpha} \Gamma$ is 
Connes embeddable. Since $\widetilde{\Gamma}$ is assumed to have finite abelianization, by Proposition \ref{prop:coeff_change} there exits some $\chi$ such that
$[\chi \circ \alpha] \neq 0 \in H^2(\Gamma, \T)$. Note that $\{ \chi \vert \, [\chi \circ \alpha] = 0 \in H^2(\Gamma,\T)\}$ is a subgroup of $\widehat \Z$, as a result, it can not contain an open neighborhood of the identity  ($\widehat \Z$ is connected).
Thus, there exists a sequence $\chi_n \in \widehat \Z$ such that $\chi_n \xrightarrow[n \to \infty]{} e \in \widehat \Z$ but
$[\chi_n \circ \alpha] \neq 0$ for every $n$. Take $c_n = \chi_n \circ \alpha$ we obtain a sequence satisfying the
conditions of Theorem \ref{thm:Ioana_analog}. Consequently, $\Gamma$ is not Hilbert--Schmidt-stable.
\end{proof}

\begin{proof}[Proof of Theorem \ref{thm:main_S_arith}]
Let $F$, $S$, $\bG$, $v_0$, $\Gamma$ be as in the statement of the theorem.
It is well known that since $\bG$ is $F$-anisotropic, $\Gamma$ has the congruence subgroup property and property (T;FD) (see the proof of \cite[Theorem 9.5]{LZ}).
Recall that for the non-compact simple Lie group $\bG(F_{v_0})$ we have $\pi_1(\bG(F_{v_0})) = \Z$, so that the universal cover group $\widetilde{\bG(F_{v_0})}$ has center $\Z$.
Let $G_0 = \prod_{v \in S} \bG(F_v)$ and $G = \prod_{v \in S \setminus \{v_0\}} \bG(F_{v_0}) \times \widetilde{\bG(F_{v_0})}$, and let $p: G \to G_0$ denote the natural covering map.
Note that $\ker p = \Z$, and let $\widetilde{\Gamma} = p^{-1}(\Gamma)$.
It is known that $\widetilde{\Gamma}$ has finite abelianization (see for example \cite[Proposition 2.1]{DH}), and it is not residually finite (see \cite[Theorem 3.3 and Remark 3.5]{stover2024residual}, or \cite{Rag_torsion}).
Consequently, $\widetilde{\Gamma}$ fits the conditions of Theorem \ref{thm:main-non-hyperlinear}.

\end{proof}
\section{Hyperfiniteness and Schramm's theorem for marked von Neumann algebras}\label{sec:hyperfinite}

In this section we introduce hyperfiniteness for sequences of marked von Neumann algebras, as well as formulate and prove an suitable analog of a fundamental theorem of O. Schramm  \cite{Schramm}.
Our treatment is heavily influenced by the theory of graph limits, for which the reader can consult the wonderful book \cite{zbMATH06122804}.
Fix a number $k \in \N$, recall that a sequence of (undirected) finite graphs $G_n = (V_n, E_n)$ with vertex degrees at most $k$ is said to be \emph{hyperfinite}  if for every $\epsilon > 0$ there exists $D$ such that for all $n$ large enough, there exists $X \subset V_n$ with  $|X | / |V_n | \geq 1- \epsilon$ such that the connected components in the induced subgraph $X\subset V_n$ are all of size at most $D$, and (see \cite{Elek}).
Hyperfiniteness can be regarded as a strong negation of being expander graphs, see \cite[Chapter 21]{zbMATH06122804} for more information.
%We will now replace finite graphs by finite dimensional marked von Neumann algebras.
We start with defining our suitable analog for the largest size of a connected component.

\begin{definition}[Degree of subhomogeneity]
If $Q$ is a finite dimensional von Neumann algebra, that is, a direct sum of matrix algebras.
we let $\mathfrak c(Q)$ be the maximal $D$ such that there exists a factor in the decomposition of $Q$ isomorphic to $M_D(\C)$.
\end{definition}

Note that $\mathfrak c(Q) \leq D$ for a type I-von Neumann algebra $Q$ is equivalent to $Q$ being \emph{$D$-subhomogeneous}, meaning that all irreducible representations of $Q$ are of dimension at most $D$.
From now on, fix $k \in \N$ (which we think of as a bound on the degree of our "graphs"). 
We can now define two notions of hyperfiniteness, the difference between them being whether the "size" of the subalgebra $Q$ is measured by $\fc(Q)$ or $\dim(Q)$. 

\begin{definition}[Hyperfinite Tuples]\label{def:hyperfinite tuple}
Let $x_1, \dots, x_k \in M$ be a tuple in a tracial von Neumann algebra $M$.
Given $\epsilon>0, D \in \N$, the tuple $(x_1, \dots, x_k)$ is said to be \textbf{\emph{$(\epsilon,D)$-hyperfinite}} if  there exists a unital finite dimensional $*$-subalgebra $Q \leq  M$ with $\fc(Q) \leq D$ and $\| x_i - \mathbb E_Q(x_i) \|_2 \leq \epsilon$ for all $i \leq k$.

    A sequence of marked von Neumann algebras $( M_n; x_n^1, \dots x_n^k)$ is said to be \emph{hyperfinite} if for every $\epsilon > 0$ there exists $D,N \in \N$ such that for all $n \geq N$, the tuple $(x_n^1, \dots, x_n^k)$ is $(\epsilon, D)$-hyperfinite.
\end{definition}

\begin{definition}[Strongly Hyperfinite Tuples]\label{def:strong hyperfinite tuple}
Let $x_1, \dots, x_k \in M$ be a tuple in a tracial von Neumann algebra $M$.
Given $\epsilon>0, D \in \N$, the tuple $(x_1, \dots, x_k)$ is said to be \textbf{\emph{$(\epsilon,D)$-strongly hyperfinite}} if  there exists a unital finite dimensional $*$-subalgebra $Q \leq  M$ with $\dim(Q) \leq D$ and $\| x_i - \mathbb E_Q(x_i) \|_2 \leq \epsilon$ for all $i \leq k$.

    A sequence of tuples in tracial von Neumann algebras $( M_n; x_n^1, \dots x_n^k)$ is said to be \emph{strongly-hyperfinite} if for every $\epsilon > 0$ there exists $D,N \in \N$ such that for all $n \geq N$, the tuple $(x_n^1, \dots, x_n^k)$ is $(\epsilon, D)$-strongly hyperfinite.
\end{definition}

Note that for the above definitions, we did not assume the tuples $x_n^1, \dots x_n^k$ generate $M_n$. 
As a consequence of the following theorem, we see that in fact hyperfiniteness can be detected in the algebras generated by $x_n^1, \dots, x_n^k$, so one does not loose much generality by working only with marked von Neumann algebras.
It is also easy to check that for a single marked von Neumann algebra $(M; \bar x)$, hyperfiniteness of the constant sequence $(M; \bar x)$ is the same as $M$ being hyperfinite in the usual sense \cite[Definition 11.1.2]{Popa}. 
Let us take this opportunity to mention Connes' foundational result \cite{Connes} showing the equivalence between hyperfiniteness and \emph{amenability} for von Neumann algebras.

A celebrated theorem of Schramm is that \emph{hyperfiniteness of a bounded degree graph sequence can be detected by a Benjamini-Schramm limit \cite{Schramm}}, here we prove the analogous statement for limits of sequences of marked von Neumann algebras.
We will also show that for sequences, hyperfiniteness and strong hyperfiniteness are in fact the same notion. 

\begin{theorem} \label{thm:vN_Schramm}
Fix $k \in \N$, and let $(\mathcal M_n; x_n^1, \dots ,x_n^k)$ be a sequence of tuples in tracial von Neumann algebras $(M_n, \tau_n)$, and assume that the marked von Neumann algebras $(\langle x_n^1, \dots , x_n^k \rangle''; x_n^1, \dots ,x_n^k)$ converge to a marked von Neumann algebra $(\mathcal M; x^1, \dots, x^k)$. The following are equivalent:
\begin{enumerate}
    \item The sequence $(\mathcal M_n; x_n^1, \dots ,x_n^k)$ is hyperfinite.
    \item The sequence $(\mathcal M_n; x_n^1, \dots ,x_n^k)$ is strongly hyperfinite.
    \item $\mathcal M$ is hyperfinite (equivalently, amenable).
\end{enumerate}
\end{theorem}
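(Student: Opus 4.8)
The plan is to establish the cycle of implications $(2) \Rightarrow (1) \Rightarrow (3) \Rightarrow (2)$. The implication $(2) \Rightarrow (1)$ is immediate, since for a finite-dimensional von Neumann algebra $Q = \bigoplus_j M_{d_j}(\C)$ one has $\fc(Q) = \max_j d_j \leq \sum_j d_j^2 = \dim(Q)$, so an $(\epsilon, D)$-strongly hyperfinite tuple is in particular $(\epsilon, D)$-hyperfinite. Thus a strongly hyperfinite sequence is hyperfinite with the same choice of $D$ for each $\epsilon$.

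For $(1) \Rightarrow (3)$, the idea is to transport the hyperfinite approximations from the $M_n$ into the limit algebra $\mathcal M$ via the tracial ultraproduct. Fix $\epsilon > 0$ and a finite word-length ball $W$ in the generators $x^1, \dots, x^k$; it suffices (by a standard density argument, since $\mathcal M = \langle x^1, \dots, x^k\rangle''$) to produce a finite-dimensional subalgebra of $\mathcal M$ that $\epsilon$-approximates every element of $W$ in $2$-norm. By hypothesis there are $D$ and subalgebras $Q_n \leq M_n$ with $\fc(Q_n) \leq D$ and $\|x_n^i - \mathbb E_{Q_n}(x_n^i)\|_2 \leq \epsilon$ for $i \leq k$, for all large $n$; since conditional expectations onto $Q_n$ are $2$-norm contractions that are multiplicative modulo error terms one controls by the same $\epsilon$ on products (here I would use that $\mathbb E_{Q_n}$ is a contraction and an $M_n$-$Q_n$-bimodule map, so $\|\mathbb E_{Q_n}(ab) - \mathbb E_{Q_n}(a)\mathbb E_{Q_n}(b)\|_2$ is small when $b \approx \mathbb E_{Q_n}(b)$), one gets $\|w(\bar x_n) - \mathbb E_{Q_n}(w(\bar x_n))\|_2 \leq C_W \epsilon$ for every $w \in W$, with $C_W$ depending only on $W$. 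Passing to the ultraproduct $\prod_{n\to\omega} M_n$, the sequence $(Q_n)_n$ generates a subalgebra; the key point is that because $\fc(Q_n) \leq D$ uniformly, the $Q_n$ are all $D$-subhomogeneous type I algebras, and I claim the resulting ultraproduct piece $Q := \prod_{n\to\omega} Q_n$ is again a $D$-subhomogeneous — hence injective/hyperfinite — von Neumann algebra (a uniform bound on matrix-block size is exactly what is needed here; there is no "infinite-dimensional block" escaping to the ultraproduct). Since $(\langle x_n^i\rangle''; \bar x_n)$ converges in moments to $(\mathcal M; \bar x)$, the moment functional on $\mathbb F_k$ realized inside $\prod_{n\to\omega} M_n$ via $s_i \mapsto (x_n^i)_n$ agrees with that of $\mathcal M$, so $\mathcal M$ embeds trace-preservingly into $\prod_{n\to\omega} M_n$ with $x^i \mapsto (x_n^i)_n$. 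Under this embedding, $\mathbb E_Q$ restricted to (the image of) $\mathcal M$ lands within $C_W\epsilon$ of each $w(\bar x)$, $w \in W$. Intersecting $Q$ with $\mathcal M$ (or rather using that $\mathbb E_Q(\mathcal M) \subseteq \mathcal M \cap Q$ up to the usual care, and that $\mathcal M \cap Q$ is hyperfinite as a subalgebra of the hyperfinite $Q$), one extracts a hyperfinite subalgebra of $\mathcal M$ that $C_W\epsilon$-approximates $W$. Letting $\epsilon \to 0$ and exhausting by the balls $W$ shows $\mathcal M$ is hyperfinite, equivalently amenable by Connes' theorem \cite{Connes}.

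For $(3) \Rightarrow (2)$: assume $\mathcal M$ is hyperfinite. Fix $\epsilon > 0$. Again identify $\mathcal M \hookrightarrow \prod_{n\to\omega} M_n$ via $x^i \mapsto (x_n^i)_n$. Since $\mathcal M$ is hyperfinite there is a finite-dimensional subalgebra $Q_0 \leq \mathcal M$ with $\|x^i - \mathbb E_{Q_0}(x^i)\|_2 \leq \epsilon/2$ for $i \leq k$; write $Q_0$ with matrix units $\{e_{j,k}^{(l)}\}$ and lift these to elements $(q_n)_n$ of $\prod M_n$, then use a standard perturbation argument (Arveson/linearization of approximate matrix units in $2$-norm — finite-dimensional algebras are "weakly stable" in tracial von Neumann algebras, cf. the semiprojectivity arguments behind Connes-embeddability) to replace them, for $\omega$-almost every $n$, by genuine matrix units spanning a subalgebra $Q_n \leq M_n$ with $\dim(Q_n) = \dim(Q_0) =: D$, such that $\|\mathbb E_{Q_n}(x_n^i) - (\text{the lift of } \mathbb E_{Q_0}(x^i))_n\|_2 \to_\omega 0$. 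Then for $\omega$-almost every $n$, $\|x_n^i - \mathbb E_{Q_n}(x_n^i)\|_2 \leq \epsilon$, i.e. $(x_n^1,\dots,x_n^k)$ is $(\epsilon, D)$-strongly hyperfinite. Since this holds along an $\omega$-large set for every $\epsilon$, and one can run the argument for a sequence $\epsilon_m \to 0$ with thresholds $D_m$, a diagonalization over the ultrafilter yields that the whole sequence is strongly hyperfinite.

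The main obstacle I anticipate is making the ultraproduct step in $(1) \Rightarrow (3)$ genuinely rigorous: one must check that $\prod_{n\to\omega} Q_n$ is hyperfinite (the uniform subhomogeneity bound $\fc(Q_n) \leq D$ is the crucial input — without it a sequence of matrix algebras of growing size would ultraproduce to something non-hyperfinite), and that $\mathbb E_Q$ interacts correctly with the embedded copy of $\mathcal M$, i.e. that $\mathbb E_Q$ does not move elements of $\mathcal M$ too far and that one can genuinely sit a finite-dimensional (or at least hyperfinite) approximant \emph{inside} $\mathcal M$ rather than merely inside the ambient ultraproduct. The cleanest route is likely to observe that $\mathbb E_{\mathcal M} \circ \mathbb E_Q$ restricted to $\mathcal M$ is a unital completely positive map into $\mathcal M$ that is close to the identity on $W$ and factors (approximately) through the hyperfinite $Q$, and then invoke the semidiscreteness characterization of injectivity/hyperfiniteness (again Connes \cite{Connes}) to conclude $\mathcal M$ is hyperfinite — this sidesteps the need to literally build a finite-dimensional subalgebra of $\mathcal M$.
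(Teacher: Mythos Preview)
Your proposal is essentially the paper's proof: same cycle of implications, same ultraproduct mechanism, and your ``cleanest route'' via semidiscreteness for $(1)\Rightarrow(3)$ is exactly what the paper packages as Lemma~\ref{lem:almost_hyperfinite_is_hyperfinite}, while your matrix-unit lifting for $(3)\Rightarrow(2)$ is Lemma~\ref{lem:lifting_matrix_alg}.

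One point to tighten: your final step in $(3)\Rightarrow(2)$, ``diagonalization over the ultrafilter yields that the whole sequence is strongly hyperfinite,'' does not work as stated. The lifting argument only gives the $(\epsilon,D)$-approximation on an $\omega$-large set of indices, whereas strong hyperfiniteness demands a cofinite set. The paper handles this by contradiction: if the sequence is not strongly hyperfinite, pass to a subsequence (which still converges in moments to $\mathcal M$) along which the $n$-th tuple fails to be $(\epsilon,n)$-strongly hyperfinite; running your lifting argument on that subsequence then produces, for some $n\geq \dim(Q_0)+1$, a unital subalgebra $\pi_n(Q_0)\oplus\C(1-\pi_n(1))$ of dimension at most $\dim(Q_0)+1\leq n$ that $\epsilon$-approximates the generators, a contradiction. (Note also that the lifted matrix units are a priori non-unital, which is why the extra $\C(1-\pi_n(1))$ summand appears.)
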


Our proof will make essential use of the tracial ultraproduct.
We first record the two following folklore lemmas:

\begin{lemma}\label{lem:almost_hyperfinite_is_hyperfinite}
    Let $M \leq \cN$ be an inclusion of tracial von Neumann algebras. Assume $M = \langle x_1, \dots, x_k \rangle$ and that for every $\epsilon >0$, there exists some hyperfinite subalgebra $\mathcal S \leq \cN$ such that $\| x_i - \bE _\mathcal S(x_i) \|_2 \leq \epsilon$ for all $i \leq k$. Then $M$ is hyperfinite.
\end{lemma}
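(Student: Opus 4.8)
The plan is to show that $M$ is amenable (equivalently hyperfinite, by Connes) by producing, for each finite subset $F \subset M$ and each $\delta > 0$, a hyperfinite approximation in the sense of \S\ref{sec:preliminaries}. The natural reduction is: since $M = \langle x_1, \dots, x_k \rangle$, it suffices to build a finite-dimensional subalgebra $Q \leq M$ whose conditional expectation moves each \emph{generator} $x_i$ by less than $\epsilon$, provided $\epsilon$ is taken small relative to $\delta$ and the word length of elements of $F$ in the $x_i$; a standard Stone--Weierstrass / $*$-polynomial argument then propagates the estimate from the $x_i$ to any $*$-polynomial in them, and a $2$-norm density argument extends it from $*$-polynomials to all of $M$. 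So the crux is: given $\epsilon > 0$, find a \emph{finite-dimensional} hyperfinite $Q \leq M$ with $\|x_i - \bE_Q(x_i)\|_2 \leq \epsilon$ for all $i$.

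By hypothesis we are handed a hyperfinite (but possibly infinite-dimensional) subalgebra $\mathcal{S} \leq \cN$ with $\|x_i - \bE_\mathcal{S}(x_i)\|_2 \leq \epsilon/2$ for all $i \leq k$. First I would replace $\bE_\mathcal{S}(x_i)$ by a genuinely finite-dimensional approximant: since $\mathcal{S}$ is hyperfinite, it contains an increasing chain of finite-dimensional subalgebras whose union is $\|\cdot\|_2$-dense, so there is a finite-dimensional $Q_0 \leq \mathcal{S}$ with $\|\bE_\mathcal{S}(x_i) - \bE_{Q_0}(x_i)\|_2 \leq \epsilon/2$ (using $\bE_{Q_0} = \bE_{Q_0} \circ \bE_\mathcal{S}$ on $\cN$ and that $\bE_{Q_0}$ is a $2$-norm contraction). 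Hence $\|x_i - \bE_{Q_0}(x_i)\|_2 \leq \epsilon$ for all $i$. The remaining issue is that $Q_0$ lives in $\cN$, not in $M$, and $\bE_{Q_0}(x_i) \in Q_0$ need not lie in $M$.

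To fix this I would use a perturbation/conjugation argument to push a copy of $Q_0$ inside $M$. The key observation is that the orthogonal projection $P := \bE_M : L^2(\cN) \to L^2(M)$ satisfies $\|Px_i - x_i\|_2 = 0$ (as $x_i \in M$), and $\|\bE_{Q_0}(x_i) - x_i\|_2 \leq \epsilon$; combining, the images of the finitely many minimal projections and matrix units of $Q_0$ under $\bE_M$ are close in $2$-norm to actual (partial) isometries and projections in $M$. Concretely, one shows the finite-dimensional algebra $Q_0$ is, after a small perturbation, unitarily conjugate by some $u \in U(M)$ close to $1$ to a subalgebra $Q \leq M$; this is a standard fact that a finite-dimensional $*$-algebra which is $2$-norm-close to being contained in a von Neumann subalgebra can be conjugated into it by a nearby unitary (see e.g.\ the arguments around \cite[Corollary F.14 and Appendix]{BO}, or the ``$Q$-close implies conjugate'' lemmas in the theory of approximately inner perturbations of hyperfinite subalgebras). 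Then $\bE_Q = \Ad(u) \circ \bE_{Q_0} \circ \Ad(u^*)$ and $\|x_i - \bE_Q(x_i)\|_2 \leq \|x_i - \bE_{Q_0}(x_i)\|_2 + O(\|u - 1\|_2) \leq 2\epsilon$, with $Q$ finite-dimensional, hence hyperfinite, and $Q \leq M$.

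Running this for every $\epsilon > 0$ produces, for each finite $F$ and $\delta$, the desired $(F,\delta)$-hyperfinite approximation of $M$ (after the generator-to-polynomial-to-$L^2$ propagation described above), so $M$ is hyperfinite. \textbf{The main obstacle} is the last step: controlling the perturbation that moves $Q_0$ from $\cN$ into $M$. One must be careful that the perturbation of the matrix units of $Q_0$ stays inside $M$ and still assembles into a $*$-subalgebra isomorphic to $Q_0$ (not merely a set of almost-matrix-units), and that the conjugating unitary is close enough to $1$ that the final estimate is genuinely $O(\epsilon)$ and not merely $O(1)$; quantitatively this needs the standard fact that a tuple of elements $2$-norm-close to a system of matrix units spanning a copy of $\bigoplus_j M_{n_j}(\C)$ can be straightened to an exact such system by a unitary at distance $O(\epsilon)$, with the implied constant independent of the dimensions $n_j$ — this uniformity is what makes the argument work for arbitrary finite-dimensional $Q_0$.
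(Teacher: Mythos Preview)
Your perturbation step --- pushing $Q_0$ from $\cN$ into $M$ --- does not go through, and this is a genuine gap rather than a technicality. You assert that ``the images of the finitely many minimal projections and matrix units of $Q_0$ under $\bE_M$ are close in $2$-norm to actual (partial) isometries and projections in $M$'', but nothing in the hypotheses controls $\bE_M$ on $Q_0$. What you know is that $\bE_{Q_0}(x_i)$ is close to $x_i$; this says certain elements of $M$ are well approximated \emph{inside} $Q_0$, not the reverse. A concrete obstruction: take $M = \C \cdot 1 \leq \cN = M_2(\C)$ with $x_1 = 1$. Then $Q_0 = M_2(\C)$ satisfies $\bE_{Q_0}(x_1) = x_1$ exactly, yet $\bE_M$ sends the off-diagonal matrix units $e_{12}, e_{21}$ to $0$ and the diagonal ones to $\tfrac{1}{2}\cdot 1$, none of which are close to partial isometries or projections; no small unitary perturbation can bring $Q_0$ into $M$. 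The matrix-unit perturbation lemmas you invoke all require the matrix units of $Q_0$ to already be $2$-norm close to $M$, and that information is simply absent.

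The paper sidesteps this entirely by proving \emph{semidiscreteness} of $M$ rather than hyperfiniteness directly (and then invoking Connes). The key is that semidiscreteness only asks for normal ucp maps $\theta: M \to Q$ and $\xi: Q \to M$ with $\xi \circ \theta$ close to the identity on a given finite set --- it does \emph{not} require $Q \leq M$. Taking $\theta = \bE_Q|_M$ and $\xi = \bE_M|_Q$ gives $\|x_i - \bE_M(\bE_Q(x_i))\|_2 \leq \|x_i - \bE_Q(x_i)\|_2$ immediately from contractivity of $\bE_M$ and the fact that $x_i \in M$, with no need to move $Q$ anywhere. The propagation from generators to arbitrary finite subsets is then the same routine $*$-polynomial argument you already sketched.
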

\begin{proof}
    Recall that a von Neumann algebra $M$ is \emph{semidiscrete} if for every $\epsilon >0$ and finite subset $F \subset M$ there exist normal ucp maps $\theta: M \to Q$ and $\xi: Q \to M$, for some finite dimensional von Neumann algebra $Q$, such that $\| x - \xi \circ \theta (x) \|_2 \leq \epsilon$ for all $x \in F$.
    It is known  that semidiscretness is equivalent to hyperfiniteness \cite{Connes}, so it is enough to show that the $M$ is semidiscrete.
    Given $\epsilon>0$, there exists some hyperfinite $\mathcal S \leq \cN$ such that $d_2(x_i, \mathcal S) \leq \epsilon/2$ for all $i \leq k$. 
    We can find $Q \leq \mathcal S$ such that $d_2(\bE_\mathcal S(x_i),  Q) \leq \epsilon $  for all $i$. It is then easy to see that $\| x_i - \bE_\mathcal M (\bE_Q(x_i))\|_2 \leq 2\epsilon$, so taking the ucp maps to be the conditional expectations, we are done.
\end{proof}

\begin{lemma}\label{lem:lifting_matrix_alg}
    Let $M_n$ be a sequence of tracial von Neumann algebras, and $Q$ be a (unital) finite dimensional subalgebra of $\ultprod{n} M_n$.
    Then there exists a sequence of (\emph{possibly non-unital}) $*$-homomorphisms $\pi_n: Q \to M_n$ such that $(\pi_n(x))_n = x$ for all $x \in Q$, as elements of $\ultprod{n} M_n$.
\end{lemma}
\begin{proof}
    This follows from \cite[Lemma 1.1.12]{Popa}.
    Indeed, write $Q = \sum_{k=1}^m Q^k$, where $Q^k \simeq M_{d_k}(\C)$. By choosing matrix units, we can find a collection of partial isometries $e_i^k \in Q^k, i\leq d_k$ and a projection $q^k_1\in Q^k$ satisfying $(e_i^k)^* e_j^k  =\delta_{i,j} \cdot q^k_1 $, such that  $q^k :=\sum_{i=1}^{d_k} e_i^k (e_i^k)^*$ is the unit of $Q^k$ and  $Q^k = \spam \{e^k_i(e_j^k)^*| i, j \leq d_k \}$.
    Let $q^k_i = e_i^k (e_i^k)^*$. Note that $q^k_i$, $k \leq m, i \leq d_k$ is a finite set of disjoint projections summing up to $1$, so we can lift it to collections of disjoint  projections $q^k_i(n) \in M_n$, $k\leq m,i \leq d_k, n \in \N$ with $\sum_{k,i} q^k_i(n) = 1$ for all $n$, such that $q^k_i = (q^k_i(n))_n$ as elements of the ultraproduct (see for example \cite{HS_alg}).
     For each $k \leq m$, set $q^k(n) = \sum_i q_i^k(n)$, and note that we have  $q^k = (q^k(n))_n$ as elements of the ultraproduct, and $Q^k = q^k Q q^k \leq \ultprod{n} q^k(n) M_n q^k(n)$.
    
    Now, by applying \cite[Lemma 11.1.12]{Popa} for each $k$ separately, we deduce that for almost every $n$ there exist  partial isometries $e^k_i(n) \in q^k(n)M_n q^k(n)$ with $e_i^k(n)^* e_j^k(n) = \delta_{i,j} \cdot p^k(n)$ for all $i,j,k,n$, where $p^k(n) \in q^k(n)M_n q^k(n)$ are projections such that $p^k(n) \leq q^k_1(n)$ and $\ultlim{n} \|q^k_1(n) -  p^k(n) \|_2 =0$ and additionally, $(e^k_i(n))_n = e_i^k$ as elements of the ultraproduct, for all $k \leq m, i \leq d_k$.
    Since $\ultlim{n} \|q^k_1(n) -  p^k(n) \|_2 =0$, in fact $(p^k(n))_n = q^k_1$.
    %, so that $(e_i^k(n))_n = e_i^k$.
    It now follows that the map $\pi_n(e_i^k) = e_i^k(n)$ extends to a (possibly non-unital) $*$-homomorphism $\pi_n: Q \to M_n$ for almost every $n$, yielding the desired conclusion.
\end{proof}
\begin{proof}[Proof of Theorem \ref{thm:vN_Schramm}]
    Let $\mathcal N = \prod_{n\to \omega} \mathcal M_n$ be the tracial ultraproduct, as in section \ref{sec:preliminaries}.
    Note that the elements $\widetilde{x}^i = (x_n^i)_n \in \mathcal N, i \leq k$ generate a marked von Neumann algebra which is isomorphic to $(M; x^1, \dots, x^k)$. Indeed, for any $*$-monomial $m(X_1, \dots, X_k)$, by convergence of the tuples $(M_n; x_n^1, \dots, x_n^k)$ we have:
    \begin{align*}
        \tau_\cN (m(\widetilde{x}^1, \dots, \widetilde{x}^k)) = \lim_{n \to \omega} \tau_{M_n} (m(x_n^1, \dots, x_n^k) ) = \tau_{M} (m(x^1, \dots, x^k))
    \end{align*}
    so the assignment $x^i \mapsto \widetilde{x}^i$ extends to a trace preserving $*$-isomorphism of $M$ and $\langle \widetilde{x}^1, \dots, \widetilde{x}^k \rangle$.
    Thus, we can assume that $x^i= (x_n^i)_n$ for each $i$ to begin with, and $M  \leq \mathcal N$ is generated by them.

    Since obviously $(2) \implies (1)$, it is enough to show $(1) \implies (3)$ and $(3) \implies (2)$.
        We first prove $(1) \implies (3)$, so we assume that the sequence $M_n$ is hyperfinite.
        Fix $\epsilon > 0$. There exists $D, N$ such that for all $n \geq N$ there exists finite dimensional $*$-subalgebras $Q_n \leq M_n$ such that $\fc(Q_n) \leq D$ and $d_2(x_n^i, Q_n) \leq \epsilon$ for all $i \leq k$.
        Since $\fc(Q_n) \leq D$, there exists some finite dimensional abelian algebras $A_n^d$ for $1\leq d \leq D$ such that $Q_n \simeq \bigoplus_{d=1}^D \bM_d(\C) \otimes A^d_n$ for all $n \geq N$. Now consider $Q = \prod_{N \leq n \to \omega} Q_n \leq \mathcal N$.
        Since $d_2(x_n^i, Q_n) \leq \epsilon$ for all $N \leq n$ and $i \leq k$, we see that $d(x^i, Q) \leq \epsilon$ for all $i \leq k$ (this follows easily from the fact that $d_2(x^i, Q) = \| x^i - \mathbb E_Q(x^i)\|_2$). 
        We claim that the von Neumann algebra $Q$ is a hyperfinite.
        Indeed, using the fact that taking ultraproducts commutes with taking finite direct sums:
        $$Q = \prod_{N \leq n \to \omega} \bigoplus_{d=1}^D \bM_d(\C) \otimes A_n^d  = \bigoplus_{d=1}^D \prod_{N \leq n \to \omega}  \bM_d(\C) \otimes A_n^d = \bigoplus_{d=1}^D  \bM_d(\C) \otimes A^d$$
        Where $A^d = \prod_{N \leq n \to \omega} A_n^d$. Since $A^d$  is abelian for all $d \leq D$,  $Q$ is hyperfinite. 
        Since $\epsilon$ was arbitrary, it then follows that $M$ is hyperfinite by Lemma \ref{lem:almost_hyperfinite_is_hyperfinite}.\footnote{We thank David Gao and Adrian Ioana for this part of the proof.}

        We now prove $(3) \implies (2)$, so we assume that $\mathcal M$ is hyperfinite. Assume by contradiction that $\mathcal M_n$ is not strongly hyperfinite. 
        By passing to a subsequence (which still converges to $\mathcal M$) and relabeling indices, we may assume that there exists $\epsilon >0$ such that for all $n \in \N$, $(M_n; x^1_n, \dots, x^k_n)$ is not $(\epsilon, n)$-strongly hyperfinite.
        Since $\mathcal M$ is hyperfinite, there exists a (unital) finite dimensional subalgebra $Q \leq \mathcal M \leq \mathcal N$, such that $Q = \langle \bE_Q(x^1), \dots, \bE_Q(x^d) \rangle$ and $d_2(x^i, Q) \leq \epsilon/4$ for all $i$. 
        By Lemma \ref{lem:lifting_matrix_alg}, there exist  (possibly non-unital) $*$-homomorphisms $\pi_n: Q \to  \mathcal M_n$ such that for all $x \in Q$, $x = (\pi_n(x))_n$ (as elements of the ultraproduct).
        Let $q_n = \pi_n(1) \in M_n$ be the units of $\pi_n(Q)$, and note that $\ultlim{n} \| q_n \|_2 = 1$.
        Let $\tilde{Q}_n = \pi_n(Q) \oplus \C(1-q_n) \leq M_n$,  observe that $\tilde{Q}_n $ is a unital $*$-subalgebra of $M_n$ with $\dim (\tilde{Q}_n ) \leq \dim(Q) + 1$.
        %Note that $\deg(Q_n) \leq \deg(Q)$ for all $n$.\admid{dimension is in fact smaller}
        Consequently, for each $i \leq k$, we have
        $$\lim_{n \to \omega} \| x^i_n - \pi_n(\bE_Q(x^i)) \|_2 =  \| x^i - \bE_Q(x^i) \|_2 \leq \epsilon/4$$ 
        So by definition of an ultralimit, there exists some $n \geq \dim(Q) + 1$ such that for all $i \leq k$, $\| x^i_n - \pi_n(\bE_Q(x^i)) \| \leq \epsilon/2$, so that in particular $d_2(x_n^i, \tilde{Q}_n ) \leq \epsilon/2$.
        Since $\dim(Q) + 1 \leq n$, this is a contradiction to the assumption that $(M; x_n^1, \dots, x_n^k)$ is not $(\epsilon, n)$-hyperfinite.
        This concludes the proof.
        
\end{proof}
%   \begin{remark}
%       In the case of a sequence of tuples $(x^1_n,\dots ,x^k_n)$ in tracial von Neumann algebras $M_n$ that do not generate $M_n$, we could have defined hyperfiniteness as in Definition \ref{def:hyperfinite tuple}, which would turn out to be equivalent to hyperfiniteness of the sequence $(\langle x^1_n,\dots , x^k_n \rangle ; x^1_n,\dots ,x^k_n)$. This is because in the implication $(1) \implies (3)$ in the above theorem we did not use the generation assumption.
%   \end{remark}

\section{Hyperfinite Hilbert--Schmidt stability} \label{sec:hyp_HS_stab}

Let us begin by defining hyperfinite Hilbert--Schmidt stability, using the newly introduced terminology of the previous section.

\begin{definition}\label{def:hyp_HS_stab}
    Let $\Gamma = \langle s_1, \dots, s_k \rangle$ be a finitely generated group $\Gamma$.
    An asymptotic representation $\pi_n :\Gamma \to U(d_n)$ is said to be \emph{hyperfinite} if the sequence of tuples $(\pi_n(s_1), \dots, \pi_n(s_k))$ is hyperfinite.
    \
    Finally, $\Gamma$ is said to be \emph{hyperfinitely Hilbert--Schmidt stable} if for every asymptotic representation $\pi_n : \Gamma \to U(d_n)$ which is hyperfinite, there exists a sequence of representations $\rho_n: \Gamma \to U(d_n)$ such that $\| \pi_n(g) - \rho_n(g) \|_2 \to 0$ for every $g \in \Gamma$.
\end{definition}

It is a routine check to prove that the Definition given here does not depend on the generating set (see for example \cite{Dog_Thesis}).
As a starting point, we give the following characterization of hyperfinite Hilbert--Schmidt stability in terms of characters, which is a direct generalization of Hadwin and Shulman's Theorem \cite{HS_grp}.

\begin{theorem}\label{thm:HS_for_non_amenable_grps}
Let $\Gamma$ be a finitely generated group, then the following are equivalent:
\begin{enumerate}
    \item $\Gamma$ is hyperfinitely Hilbert--Schmidt stable.
    \item Every von Neumann amenable character of  $\Gamma$ is a limit of finite dimensional traces. 
\end{enumerate}
\end{theorem}
\begin{proof}
Assume $(1)$, and let $\varphi$ be a von Neumann amenable character. Then the von Neumann algebra of $\varphi$ is in particular Connes embeddable. It follows that $\varphi=\lim_n \frac{1}{d_n}\mathrm{tr}\circ \pi_n$ for an asymptotic representation $\pi_n:\Gamma\to U(d_n)$. Fix a finite generating set $\gamma_1 ,...,\gamma_m\in \Gamma$. The tuples $(\pi_n (\gamma_1 ),...,(\pi_n(\gamma)))$ converges to $(\pi(\gamma_1),...,\pi(\gamma_m))$ where $\pi$ is the representation corresponding to $\varphi$. Since $\pi(\Gamma)''$ is amenable, it is moreover hyperfinite by Connes theorem. 
It follows from Theorem \ref{thm:vN_Schramm} that the asymptotic representation $\pi_n$ is hyperfinite. Thus, as $\Gamma$ hyperfinitely Hilbert--Schmidt stable, there exists a sequence of genuine representations $\rho_n:\Gamma\to U(d_n)$ with $\|\pi_n(\gamma)-\rho_n(\gamma)\|\to 0$ for all $\gamma\in \Gamma$. It follows that $\varphi$ is a limit of finite dimensional traces
$$\varphi=\lim_n \frac{1}{d_n}\mathrm{tr}\circ \pi_n=\lim_n \frac{1}{d_n}\mathrm{tr}\circ \rho_n $$

The argument for the other direction is a combination of Theorem \ref{thm:vN_Schramm}  with the argument given in \cite[Theorem 4]{HS_grp}, as follows. 
    Suppose $\pi_n: \Gamma \to U(d_n)$ is a hyperfinite asymptotic representation of $\Gamma$. It is enough to show that any subsequence of $\pi_n$ admits a further subsequence which can be corrected to a genuine sequence of representation. Thus, consider a subsequence of $\pi_{n_m}$, and let $(M_m;\bar{x}_m)$ be the corresponding marked von Neumann algebras. By  compactness, we may extract a subsequence  $(M_{m_l};\bar{x}_{m_l})$ which converges to a limit $(M,\bar{x})$. By Theorem \ref{thm:vN_Schramm}, $M$ is amenable. Moreover, since $\pi_{n_{m_l}}$ are asymptotic representations, the map $\pi:\Gamma\to U(M)$ corresponding to the marked von Neumann algebra $(M,\bar{x})$ is a genuine representation. 
    It follows that $\varphi=\tau_M \circ \pi$ is a trace, and by assumption, it is a limit of of finite dimensional traces $\varphi_l=\frac{1}{d_l'}\mathrm{tr}\circ \rho_l$ for some $d_l' \in \N $ and some representations $\rho_l:\Gamma\to U(d_l' )$. 
    
    While there is no reason to expect that $d_{l'} = d_{n_{m_l}}$, by replacing $\rho_l$  by a direct sum of $\rho_l$ with itself, and  adding the trivial representation to if  necessary, we get a new representation whose trace also converges to $\varphi$, and whose dimension is the same as of $\pi_{n_{m_l}}$ (see \cite[Lemma 3.7]{HS_alg}). 
    We are thus in the situation that $ \rho_l$ and $\pi_{n_{m_l}}$ are of equal dimensions and they converge to the same limiting trace $\varphi$, which generates a hyperfinite algebra. We can now apply \cite[Theorem 1.1]{HS_alg}, to find unitaries $u_l \in U(d_{n_{m_l}})$ such that $\pi_{n_{m_l}}$ and the conjugates of the form $\mathrm{Ad}_{u_l}\circ \rho_l$ satisfy $\| \pi_{n_{m_l}}(g) - \mathrm{Ad}_{u_l}(\rho_l(g)) \|_2 \to_l 0$.
    Then  $\mathrm{Ad}_{u_l}\circ \rho_l$  are the desired representations, thus proving hyperfinite HS-stability. 
\end{proof} 

An immediate consequence of the above theorem is that an amenable group is hyperfinitely Hilbert--Schmidt stable if and only if it is Hilbert--Schmidt stable in the usual sense.
This is because all traces on an amenable group are von-Neumann amenable, so that condition $(2)$ in Theorem \ref{thm:HS_for_non_amenable_grps} is equivalent to the criterion of Hadwin and Shulman for Hilbert--Schmidt stability of an amenable group.
As a nice bonus, we get that some permanence properties of Hilbert--Schmidt stability of amenable groups, which are not true in general for non-amenable groups, hold for hyperfinite Hilbert--Schmidt stability as well.

\begin{proposition}\label{prop:hyp_stab_prod}
    If $\Gamma_1, \Gamma_2$ are hyperfinitely Hilbert--Schmidt stable, then so is $\Gamma_1 \times\Gamma_2$.
\end{proposition}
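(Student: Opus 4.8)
The plan is to use the character-theoretic criterion of Theorem~\ref{thm:HS_for_non_amenable_grps}: it suffices to show that every von Neumann amenable character $\varphi$ of $\Gamma_1\times\Gamma_2$ is a pointwise limit of finite-dimensional traces. The key structural input is that a character of a direct product factors as a tensor product: if $\varphi\in\Tr{\Gamma_1\times\Gamma_2}$ is a character, with associated II$_1$-factor (or finite-dimensional factor) $M_\varphi=\pi_\varphi(\Gamma_1\times\Gamma_2)''$, then since $\pi_\varphi(\Gamma_1)$ and $\pi_\varphi(\Gamma_2)$ commute, the subfactors $M_i := \pi_\varphi(\Gamma_i)''$ commute, and one shows $M_\varphi \cong M_1 \mathbin{\bar\otimes} M_2$ with $\varphi = \varphi_1 \otimes \varphi_2$ where $\varphi_i = \tau_{M_\varphi}\circ\pi_\varphi\!\res_{\Gamma_i}$. (This is classical — it is essentially the statement that the extreme traces of a product are products of extreme traces; one can cite \cite[Chapter 11]{BdlH}, or prove it directly by noting that the relative commutant $M_1'\cap M_\varphi \supseteq M_2$ forces a tensor splitting once one checks $M_1 \vee M_2 = M_\varphi$ and that the trace restricts as a product on the algebraic tensor product, extending by normality.)

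Next I would observe that von Neumann amenability passes to tensor factors: if $M_1\mathbin{\bar\otimes}M_2$ is amenable (hyperfinite), then each $M_i$ is amenable, since $M_i$ is the image of a normal conditional expectation from $M_1\mathbin{\bar\otimes}M_2$ (namely $\mathrm{id}\otimes\tau_{M_{3-i}}$ composed with the obvious identification), and amenability is inherited by ranges of normal conditional expectations. Hence $\varphi_1$ and $\varphi_2$ are von Neumann amenable traces on $\Gamma_1$ and $\Gamma_2$ respectively. They need not be characters of the $\Gamma_i$, but that is harmless: a von Neumann amenable \emph{trace} is a limit of von Neumann amenable characters via the Choquet decomposition of the amenable trace over the (closed, by amenability being preserved under integrals in this setting) face of amenable characters — or, more cheaply, we can avoid this point entirely by invoking Theorem~\ref{thm:HS_for_non_amenable_grps} in the form used in its proof, which really shows hyperfinite Hilbert--Schmidt stability is equivalent to ``every von Neumann amenable \emph{trace} is a limit of finite-dimensional traces'' (Connes embeddability of the amenable algebra gives an asymptotic representation; hyperfiniteness is detected by the limit via Theorem~\ref{thm:vN_Schramm}). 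So from hyperfinite Hilbert--Schmidt stability of $\Gamma_i$ we get sequences of finite-dimensional representations $\rho_n^{(i)}$ of $\Gamma_i$ with $\frac{1}{\dim\rho_n^{(i)}}\tr\circ\rho_n^{(i)} \to \varphi_i$ pointwise.

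Finally I would combine these: set $\rho_n = \rho_n^{(1)}\otimes\rho_n^{(2)}$, a finite-dimensional representation of $\Gamma_1\times\Gamma_2$. Its normalized trace is
\[
\frac{1}{\dim\rho_n}\tr\circ\rho_n(g_1,g_2) = \Big(\tfrac{1}{\dim\rho_n^{(1)}}\tr\circ\rho_n^{(1)}(g_1)\Big)\Big(\tfrac{1}{\dim\rho_n^{(2)}}\tr\circ\rho_n^{(2)}(g_2)\Big) \longrightarrow \varphi_1(g_1)\varphi_2(g_2) = \varphi(g_1,g_2),
\]
so $\varphi$ is a pointwise limit of finite-dimensional traces, and Theorem~\ref{thm:HS_for_non_amenable_grps} gives hyperfinite Hilbert--Schmidt stability of $\Gamma_1\times\Gamma_2$. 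The main obstacle — and the only genuinely non-formal point — is the tensor-splitting of an amenable character of the product together with the passage from ``amenable character of $\Gamma_i$'' back to ``amenable character/trace of the product'': one must be careful that the reduction to characters (extreme points) on each factor is legitimate, which is why it is cleanest to phrase the whole argument at the level of von Neumann amenable \emph{traces} rather than characters, exactly as in the proof of Theorem~\ref{thm:HS_for_non_amenable_grps}. Everything else is the standard ``tensor product of asymptotic representations'' bookkeeping, including matching dimensions if one insists on the strict form, which here is automatic since $\dim\rho_n = \dim\rho_n^{(1)}\dim\rho_n^{(2)}$ and we only need convergence of traces.
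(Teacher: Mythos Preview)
Your proposal is correct and follows essentially the same route as the paper: reduce via Theorem~\ref{thm:HS_for_non_amenable_grps} to a statement about von Neumann amenable characters, split $\varphi$ as $\varphi_1\otimes\varphi_2$, use that amenability passes to the tensor factors, approximate each $\varphi_i$ by finite-dimensional traces, and tensor the approximations back together. The only difference is that the paper invokes Thoma's theorem directly to get that the $\varphi_i$ are \emph{characters} of $\Gamma_i$ (equivalently, your $M_i$ are factors because $M_\varphi$ is), so your detour through ``$\varphi_i$ need not be characters'' is unnecessary---though your workaround is also valid.
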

\begin{proof}
    Let $\varphi$ be a von Neumann amenable character of $\Gamma_1 \times \Gamma_2$. By a result of Thoma \cite{thoma1964unitare}, there exists characters $\varphi_1 \in \Ch{\Gamma_1}, \varphi_2 \in \Ch{\Gamma_2}$ such that $\varphi = \varphi_1 \otimes \varphi_2$, so that $\pi_{\varphi}(g h) = \pi_{\varphi_1}(g) \otimes \pi_{\varphi_2}(h)$ for $g \in \Gamma_1, h \in \Gamma_2$ and $M_{\varphi} = M_{\varphi_1} \otimes M_{\varphi_2}$.
    Since $M_{\varphi}$ is amenable, so are $M_{\varphi_1}, M_{\varphi_2}$. As such, $\varphi_1 ,\varphi_2$ are von Neumann amenable characters. 
    By Theorem \ref{thm:HS_for_non_amenable_grps}, there are sequence of finite dimensional traces $\varphi_{n,1} \in \Tr{\Gamma_1}, \varphi_{n,2} \in \Tr{\Gamma_2}$ such that $\lim_n \varphi_{n,1}= \varphi_1$ and $ \lim_n \varphi_{n,2} = \varphi_2$.
    It is then easy to see that $\varphi_{n,1} \otimes \varphi_{n,2}$ is a sequence of finite dimensional traces of $\Gamma_1 \times\Gamma_2$ which converge to  $\varphi$. 
    Consequently, $\Gamma_1 \times\Gamma_2$ is hyperfinitely Hilbert--Schmidt stable by Theorem \ref{thm:HS_for_non_amenable_grps}.
    \end{proof}

A particular consequence of the previous proposition is that $\mathbb F_2 \times \mathbb F_2$ is hyperfinitely Hilbert--Schmidt stable, while it is not Hilbert--Schmidt stable, as shown by Ioana \cite{ioana2024almost}.

\section{Robust spectral gap for asymptotic representations}\label{sec:rob}

The proof of Theorem \ref{thm:main-non-hyperlinear} heavily relied on Hilbert--Schmidt stability to provide rigidity properties of asymptotic representations for groups with property (T;FD). 
It is natural to wonder whether it is possible to give forms of rigidity, or spectral gap for asymptotic representations such groups, without the need for stability.
To that end, we develop a notion, which we call \emph{robust property (T;FD)}. 
We will also discuss a robust version of property (T), when the asymptotic representations are considered in \emph{operator norm}.
We will show that both these properties are implied by property (T).

A positive bounded operator \(x \in \mathcal{B}(\mathcal{H})\) is said to have \emph{\(\lambda\)-spectral gap} (for some \(\lambda > 0\)) if
\[
\sigma(x)\,\cap\,(0,\lambda) \;=\; \emptyset,
\]
where $\sigma(x)\subset [0,\infty)$ is the spectrum of $x$.
Given a finite symmetric generating set $S\subset \Gamma$, we denote the corresponding (normalized) \emph{combinatorial Laplacian} by \[\Delta_S = 1 - \frac{1}{|S|}\sum_{s \in S} s \in \C[\Gamma].\]

Given a map  $\pi:\Gamma \to U(\cH)$ that satisfies $\pi(s^{-1}) = \pi(s)^*$ for each $s \in S$, we consider the non-negative  contracting operator  \[\pi(\Delta_S) = 1 - \frac{1}{|S|}\sum_{s\in S} \pi(s)\in \mathcal{B}(\mathcal{H}).\]

Property (T) and its relative variants are typically formulated in terms of the trivial representation being isolated in the Fell topology. We shall use an equivalent definition and its generalizations (see \cite{LZimm}).

\begin{definition}
    Let \(\Gamma\) be a finitely generated group, and let \(\mathcal{Q}\) be a class of von Neumann algebras.
    We say that \(\Gamma\) satisfies \emph{property \((T;\mathcal{Q})\)} if there exist (equivalently, for any) a finite generating set \(S \subset \Gamma\),  and \(\lambda > 0\), such that for every representation $\pi$ for which $\pi(\Gamma)''$ is in $\mathcal Q$, the positive operator \(\pi(\Delta_S)\) has a \(\lambda\)-spectral gap.
\end{definition} 

Consider:
\begin{itemize}
  \item \(\mathrm{FD}\) is the class of finite dimensional von Neumann algebras;
  \item \(\mathrm{CE}\) is the class of Connes embeddable von Neumann algebras;
  \item \(\mathrm{W}^*\) is the class of all tracial von Neumann algebras.
\end{itemize}
Note that  $  \mathrm{FD} \;\subset\; \mathrm{CE} \;\subset\; \mathrm{W}^*$.
We now work towards robust versions of the above notions.

\begin{definition}
    A positive element \(x\) in a tracial von Neumann algebra \((M,\tau)\) is said to have an \emph{\((\epsilon,\alpha)\)-almost \(\lambda\)-spectral gap}, for  \(\epsilon, \alpha, \lambda >  0\), if
\[
\tau\bigl(\chi_{[\alpha,\lambda-\alpha]}(x)\bigr) \;\le\; \epsilon,
\]
that is,  the spectral projection of $x$ corresponding to the interval $[\alpha,\lambda-\alpha]$ is of trace at most $\epsilon$. 
\end{definition}
% If $x \in B(\cH)$ is a positive operator, we will also say that $x$ has \emph{$\alpha$-almost $\lambda$-spectral gap} if:
% $$ \chi_{[\alpha, \lambda- \alpha]}(x) = 0.$$

We can now give a characterization of spectral gap for a limit in the space of marked von Neumann algebras (see \S\ref{subsec:marked}), similarly to how we characterized hyperfiniteness of such a limit in Theorem \ref{thm:vN_Schramm}. 
This in particular implies that uniform spectral gap passes to limits, generalizing \cite{levit2023spectral}.

\begin{proposition}\label{prop:spec_gap_pass_to_limit}
Let $(M_n;\bar x_n)$ be a sequence of $k$-marked von Neumann algebras converging to $(M;\bar x)$, and let $p$ be a $*$-polynomial in $k$ variables, such that for all $n$, $p(\bar{x}_n) \in M_n$ and $p(\bar x)\in M$ are positive.

Then, given $\lambda > 0$, the operator $p(\bar x)$ has $\lambda$-spectral gap if and only if for every $\alpha > 0, \epsilon > 0$, the operators $p(\bar x_n)$ have $(\epsilon, \alpha)$-almost $\lambda$-spectral gap for almost every $n$.
\end{proposition}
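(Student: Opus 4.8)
The plan is to transport everything through Lemma~\ref{lem:convergence of spectral measures}. Set $\mu_n := \mu_{p(\bar x_n)}$ and $\mu := \mu_{p(\bar x)}$; since $p(\bar x_n),p(\bar x)$ are self-adjoint and the tuples consist of unitaries, these are probability measures supported on $[0,\|p\|_1]$, and by that lemma $\mu_n \to \mu$ in the weak-$*$ topology. The first task is to set up a dictionary. Directly from the definition of the spectral projection, $\tau_n\bigl(\chi_{[\alpha,\lambda-\alpha]}(p(\bar x_n))\bigr) = \mu_n\bigl([\alpha,\lambda-\alpha]\bigr)$, so $p(\bar x_n)$ has an $(\epsilon,\alpha)$-almost $\lambda$-spectral gap iff $\mu_n([\alpha,\lambda-\alpha])\le\epsilon$. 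On the other side, $\mu$ is supported on $\sigma(p(\bar x))$, and conversely, because the trace $\tau$ on $M$ is faithful, every point of $\sigma(p(\bar x))$ lies in $\supp\mu$ (if $f\in C_c(\R)$ is nonnegative, small-supported and positive at a spectral point, then $0\le f(p(\bar x))\neq 0$, so $\mu(f)=\tau(f(p(\bar x)))>0$); hence $\supp\mu=\sigma(p(\bar x))$ and $p(\bar x)$ has a $\lambda$-spectral gap iff $\mu\bigl((0,\lambda)\bigr)=0$. After this translation the claim is a purely measure-theoretic statement about weak-$*$ limits.

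For the forward direction I would assume $\mu((0,\lambda))=0$ and fix $\alpha,\epsilon>0$. Whenever $[\alpha,\lambda-\alpha]$ is nonempty it is contained in the open interval $(0,\lambda)$, so $\mu([\alpha,\lambda-\alpha])=0$ in all cases; since $[\alpha,\lambda-\alpha]$ is \emph{closed}, the Portmanteau theorem gives $\limsup_n\mu_n([\alpha,\lambda-\alpha])\le\mu([\alpha,\lambda-\alpha])=0$, so $\mu_n([\alpha,\lambda-\alpha])\le\epsilon$ for all large $n$, a fortiori for almost every $n$. For the converse I would argue by contraposition: if $c:=\mu((0,\lambda))>0$, then writing $(0,\lambda)=\bigcup_m[\tfrac1m,\lambda-\tfrac1m]$ and using continuity of measure from below, there is $\alpha_0\in(0,\lambda/2)$ with $\mu([\alpha_0,\lambda-\alpha_0])>c/2$. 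The \emph{open} set $U=(\alpha_0/2,\lambda-\alpha_0/2)$ contains $[\alpha_0,\lambda-\alpha_0]$, so $\mu(U)>c/2$, and Portmanteau gives $\liminf_n\mu_n(U)\ge\mu(U)>c/2$; since $U\subset[\alpha_0/2,\lambda-\alpha_0/2]$ this forces $\mu_n([\alpha_0/2,\lambda-\alpha_0/2])>c/2$ for all large $n$, i.e. the right-hand condition fails for the pair $(\alpha,\epsilon)=(\alpha_0/2,c/2)$.

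The only point that needs genuine care is the bookkeeping of open versus closed intervals: weak-$*$ convergence controls $\limsup$ only on closed sets and $\liminf$ only on open sets, so one must present the relevant "bad" interval as closed with zero limit mass in the forward direction and as open with positive limit mass in the converse — the harmless shrinkage from $[\alpha_0,\lambda-\alpha_0]$ to $(\alpha_0/2,\lambda-\alpha_0/2)$ is exactly the device that reconciles the two. The secondary subtlety, identifying $\supp\mu$ with $\sigma(p(\bar x))$ via faithfulness of the trace on $M$, is the other place where one must be slightly careful; everything else is routine.
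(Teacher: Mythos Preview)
Your proof is correct and follows essentially the same route as the paper: both reduce the statement via Lemma~\ref{lem:convergence of spectral measures} to a weak-$*$ convergence assertion about the spectral measures $\mu_n\to\mu$, and then handle the two directions using the Portmanteau theorem on closed versus open intervals. Your write-up is in fact slightly more careful than the paper's, since you explicitly justify the equivalence $\sigma(p(\bar x))\cap(0,\lambda)=\emptyset\iff\mu((0,\lambda))=0$ via faithfulness of the trace, a step the paper leaves implicit.
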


\begin{proof}
    By Lemma \ref{lem:convergence of spectral measures} we have that the spectral measures $\mu_n$ of $p(\bar x_n)$ converge weak-$*$ to  the spectral measure $\mu$ of $p(\bar x)$.
    Fix $\lambda > 0$, and we will show that $\mu((0,\lambda))=0$  if and only if
    \begin{equation}\label{eq:conv}
        \text{for every $\alpha > 0$,} \qquad \lim_n \mu_n([\alpha, \lambda - \alpha])  = 0.
    \end{equation}

    If  (\ref{eq:conv}) holds, then Portmanteau's theorem on weak-$*$ convergence implies that
    \begin{align*}
        0\leq \mu((\alpha,\lambda-\alpha)) \leq  \liminf_{n\to \infty} \mu_n((\alpha, \lambda-\alpha))
        \leq \liminf_{n\to\infty} \mu_n([\alpha/2, \lambda-\alpha/2]) = 0.
    \end{align*}
    Since this holds for all $\alpha>0$, it follows that $\mu((0,\lambda))=0$.
    Conversely, if $\mu((0,\lambda))=0$, then again by Portmanteau's theorem we have that for all $\alpha >0$  $$0 \leq \limsup_{n\to \infty} \mu_n([\alpha, \lambda - \alpha]) \leq \mu([\alpha, \lambda - \alpha]) = 0,$$ so that condition (\ref{eq:conv}) holds.
\end{proof}

We can now introduce the robust version of property (T;$\mathcal Q$).

\begin{definition}
Given $\delta > 0$ and a finite subset $S\subset F \subset \Gamma$, a map $\pi: \Gamma \to U(M)$ into  a tracial von Neumann algebra $(M,\tau)$ is said to be a \emph{$(F, \delta)$-almost representation} if:
$$ \| \pi(gh) - \pi(g) \pi(h) \|_{2} \leq \delta\; \text{ for all $g,h \in F$,}$$
and $\pi(s^{-1}) = \pi(s)^{*}$ for all $s \in S$.
%   A map $\pi: \Gamma \to B(\cH)$ is said to be a \emph{$(F,\delta, \op)$-almost representation} if 
%   $$ \| \pi(gh) - \pi(g) \pi(h) \|_{\op} \leq \delta\; \text{ for all $g,h \in F$,}$$
%   and $\pi(s^{-1}) = \pi(s)^{*}$ for all $s \in F$.
\end{definition}

\begin{definition}\label{def:robust}
    Let $\Gamma$ be a finitely presented group, and let \(\mathcal{Q}\) be a collection of tracial von Neumann algebras.
    We say that \(\Gamma \) satisfies \emph{property \((T;\mathcal{Q})_\mathrm{rob}\)} if there exist a finite generating set \(S \subset \Gamma \) and \(\lambda > 0\) such that the following holds:

    For every \(\epsilon > 0, \alpha>0\) there exists \(\delta > 0\) and a finite subset $S \subset F \subset\Gamma$ such that for every \((M,\tau) \in \mathcal{Q}\), and for every \((F,\delta)\)-almost representation \(\pi \colon \Gamma  \to U(M)\), the positive operator \(\pi(\Delta_S) \) has an \((\epsilon, \alpha)\)-almost \(\lambda\)-spectral gap.

\end{definition}

This definition is independent of the generating set. In fact, similarly to Theorem \ref{thm:vN_Schramm} characterizing hyperfiniteness using ultraproducts, we can also characterize robust property $(T;\mathcal Q)_{\mathrm{rob}}$ using ultraproducts. 
% In particular, this shows that property $(T;FD)_{\mathrm{rob}}$ and property $(T;\mathrm{CE})$ , where $\mathrm{CE}$ is the class of Connes embeddable algebras, are the same.

\begin{proposition}\label{prop:eq defs of robust}
    Let $\Gamma = \langle S \rangle$ be a finitely generated group, $S=\{s_1, \dots, s_k\}$ a symmetric finite generating set. 
    Fix a collection of \(\mathcal{Q}\) of tracial von Neumann algebras, and let $\mathcal P$ be the collection of all tracial von Neumann algebras that can be embedded in tracial ultraproducts of sequences from $\cQ$.
    The following are equivalent:
    \begin{enumerate}
        \item  $\Gamma$ has property $(T;\cQ)_\mathrm{rob}$.
        \item There exists $\lambda > 0$ such that for every sequence  $(M_n,\tau_n) \in \cQ$ and every  asymptotic representation $\pi_n: \Gamma \to U(M_n)$, for every $\alpha> 0$ we have:
            $$\tau_n(\chi_{[\alpha,\lambda-\alpha]}(\pi_n(\Delta_S))) \to_n 0.$$
        \item  $\Gamma$ has property (T;$\cP$). % where $\mathrm{CE}$ is the collection of Connes embeddable tracial representations.
        \item For every $\epsilon> 0 $ there exists $\delta > 0$ such that for every trace $\varphi \in \Tr{\Gamma}$ with $M_\varphi = \pi_\varphi(\Gamma)'' \in \cP$, if $\xi \in L^2(M_\varphi)$ is a unit vector such that $\| \pi_\varphi(g)\xi - \xi \|_2 \leq \delta$ for all $g \in S$, then there exists a $\Gamma$-invariant unit vector $\eta \in L^2(M_\varphi)$ such that $\| \xi - \eta \|_2 \leq \epsilon$. 
    \end{enumerate}
\end{proposition}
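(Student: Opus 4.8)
The plan is to establish the chain $(1)\Leftrightarrow(2)\Leftrightarrow(3)\Leftrightarrow(4)$, fixing throughout the generating set $S$ from the statement. This costs nothing: properties $(T;\cQ)_{\mathrm{rob}}$ and $(T;\cP)$ are independent of the finite generating set used to define them (as recorded before the proposition, and by the classical argument for Kazhdan-type properties), and so is condition $(4)$, so it suffices to test everything against this $S$. The equivalence $(1)\Leftrightarrow(2)$ is a direct unwinding of quantifiers. For $(1)\Rightarrow(2)$, given a sequence $(M_n,\tau_n)\in\cQ$, an asymptotic representation $\pi_n$, and $\alpha>0$, fix $\epsilon>0$; property $(T;\cQ)_{\mathrm{rob}}$ supplies $\delta>0$ and a finite set $F\supset S$, and since $\|\pi_n(gh)-\pi_n(g)\pi_n(h)\|_2\to 0$ for each of the finitely many $g,h\in F$, the $\pi_n$ are $(F,\delta)$-almost representations for all large $n$, whence $\tau_n(\chi_{[\alpha,\lambda-\alpha]}(\pi_n(\Delta_S)))\le\epsilon$ eventually --- and $\epsilon$ was arbitrary. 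For $(2)\Rightarrow(1)$ one argues by contraposition with the $\lambda$ of $(2)$: if $(1)$ failed, pick $\epsilon_0,\alpha_0>0$ witnessing this, an exhaustion $S\subset F_1\subset F_2\subset\cdots$ of $\Gamma$, and $\delta_n\downarrow 0$; the resulting $(F_n,\delta_n)$-almost representations $\pi_n\colon\Gamma\to U(M_n)$, $M_n\in\cQ$, form an asymptotic representation contradicting $(2)$.

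For $(2)\Leftrightarrow(3)$ the engine is Proposition~\ref{prop:spec_gap_pass_to_limit}, together with two standing facts: $\cP$ is closed under tracial von Neumann subalgebras (compose embeddings), and --- as is essentially contained in the proof of Theorem~\ref{thm:vN_Schramm} --- a moments-convergent sequence of marked von Neumann algebras from $\cQ$ has its limit embedded into the corresponding tracial ultraproduct (via $x^i\mapsto(x_n^i)_n$), hence in $\cP$, while conversely every $M\in\cP$ equipped with a generating $k$-tuple arises, after a diagonal extraction over $*$-monomials inside $\omega$-large sets, as such a moments-limit. For $(3)\Rightarrow(2)$: if $\tau_n(\chi_{[\alpha,\lambda-\alpha]}(\pi_n(\Delta_S)))\not\to 0$ for some asymptotic representation $\pi_n\colon\Gamma\to U(M_n)$ with $M_n\in\cQ$, pass to a subsequence along which this stays $\ge\epsilon_0>0$ and, by compactness of $\Tr{\F_k}$, to a further subsequence along which $(M_n;\pi_n(s_1),\dots,\pi_n(s_k))$ converges in moments to some $(M;\bar x)$; asymptotic multiplicativity makes $\bar x$ a genuine homomorphism $\pi\colon\Gamma\to U(M)$ with $M=\pi(\Gamma)''\in\cP$, so $(T;\cP)$ gives $\lambda$-spectral gap for $p(\bar x)=\pi(\Delta_S)$, and Proposition~\ref{prop:spec_gap_pass_to_limit} then forces an $(\epsilon_0/2,\alpha)$-almost $\lambda$-spectral gap of $\pi_n(\Delta_S)$ along the subsequence --- a contradiction. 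For $(2)\Rightarrow(3)$: given a representation $\pi$ with $\pi(\Gamma)''\in\cP$, replace $M$ by $\pi(\Gamma)''$ (spectral projections of $\pi(\Delta_S)\in\pi(\Gamma)''$ are intrinsic), write $\pi(\Gamma)''\hookrightarrow\prod_{n\to\omega}M_n$ with $M_n\in\cQ$, lift the generators $\pi(s_i)$ to unitaries $\pi_n(s_i)\in U(M_n)$ with $\pi_n(s_i^{-1}):=\pi_n(s_i)^*$, and extract a subsequence along which the marked algebras converge in moments to the one attached to $\pi$; the restricted maps then form a genuine asymptotic representation into the $M_n\in\cQ$, so $(2)$ provides exactly the hypothesis of Proposition~\ref{prop:spec_gap_pass_to_limit}, yielding $\lambda$-spectral gap for $\pi(\Delta_S)$. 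Since $\sigma(\pi(\Delta_S))\subseteq\{0\}\cup[\lambda,2]$ for any representation, this is precisely $(T;\cP)$.

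The equivalence $(3)\Leftrightarrow(4)$ is the classical passage between the spectral-gap and Kazhdan-pair formulations of relative property (T), restricted to the class $\cP$. Since the GNS representation of a trace $\varphi$ has $\pi_\varphi(\Gamma)''=M_\varphi$, and since the spectrum of $\Delta_S$ in a representation depends only on its quasi-equivalence class and agrees with its spectrum computed in the generated von Neumann algebra (spectral permanence), testing $(T;\cP)$ on the representations $\pi_\varphi$ is the same as testing it on all representations with von Neumann algebra in $\cP$. Using the identity $\langle\pi(\Delta_S)\xi,\xi\rangle=\frac{1}{2|S|}\sum_{s\in S}\|\pi(s)\xi-\xi\|^2$ and that $\ker\pi_\varphi(\Delta_S)$ is the space of $\Gamma$-invariant vectors, $(3)\Rightarrow(4)$ follows by splitting an almost-invariant unit vector $\xi\in L^2(M_\varphi)$ into its invariant part and its complement and bounding the latter by a constant multiple of $\delta/\sqrt\lambda$. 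For $(4)\Rightarrow(3)$, feed $(4)$ the value $\epsilon=1/\sqrt 2$ to obtain $\delta_0>0$ and set $\lambda:=\delta_0^2/(2|S|)$: were some $\pi_\varphi(\Delta_S)$ (with $M_\varphi\in\cP$) to lack $\lambda$-spectral gap, a unit vector in its spectral subspace over $(0,\lambda)$ (supported away from $0$ and $\lambda$) would be $\delta_0$-almost $S$-invariant yet orthogonal to all invariant vectors, hence at distance $\sqrt 2>1/\sqrt 2$ from every invariant unit vector, contradicting $(4)$; thus $\lambda$ is a uniform spectral-gap constant and $(T;\cP)$ holds.

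The only step I expect to require genuine care is the bookkeeping in $(2)\Leftrightarrow(3)$: the subsequences must be chosen \emph{compatibly with the ultrafilter} $\omega$ so that the maps extracted from the ultraproduct form a bona fide asymptotic representation rather than merely an $\omega$-asymptotic one, and the lifted generators, although arranged to satisfy $\pi_n(s^{-1})=\pi_n(s)^*$, will only be approximately multiplicative; but both this lifting error and the gap between ordinary and $\omega$-limits are absorbed upon passing to the moments-limit. The remaining steps are routine quantifier manipulation and standard instances of the relative-property-(T) arguments.
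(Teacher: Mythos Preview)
Your proposal is correct and follows essentially the same approach as the paper: the equivalences $(1)\Leftrightarrow(2)$ and $(3)\Leftrightarrow(4)$ are declared routine (the paper leaves the first to the reader and cites standard references for the second), and the substantive implications $(2)\Leftrightarrow(3)$ are proved exactly as you outline, via compactness of $\Tr{\F_k}$, the ultraproduct embedding, and Proposition~\ref{prop:spec_gap_pass_to_limit}. The only cosmetic difference is that for $(2)\Rightarrow(3)$ the paper invokes a reference (Ozawa) to produce the asymptotic representation directly, whereas you lift generators and extract a subsequence by hand; your closing remark about the $\omega$-versus-ordinary-limit bookkeeping is precisely what that citation absorbs.
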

\begin{proof}
    The equivalence of $(1)$ and $(2)$ is straightforward, and we leave it to the reader. 
    The equivalence of $(3)$ and $(4)$ follows from the standard characterization of spectral gap for Laplacians, see \cite[Proposition I]{dlHRV} and \cite[Proposition 1.1.9]{BdlV}.
    
    We now prove $(2) \implies (3)$. 
    Fix $\lambda$ as in Definition \ref{def:robust}.
    Let $\varphi \in \Tr{\Gamma}$ be a trace such that $M_\varphi = \pi_\varphi(\Gamma) ''$ is in $\cP$.
    That is, there is a sequence $(M_n,\tau_n) \in \cQ$ such that $M_\varphi \leq \ultprod{n} M_n$.
    In this case, there exists an asymptotic representation $\pi_n: \Gamma \to U(M_n)$ such that the tuples $(\pi_n(s_1), \dots , \pi_n(s_k))_n$ converge in moments to the tuples $(\pi_\varphi(s_1),\dots ,\pi_\varphi(s_k))$ (see \cite[Theorem 7]{OzawaConnes}).
    Further, we can modify $\pi_n$ so that that $\pi_n(s_i^{-1}) = \pi_n(s_i)^{-1}$ for each $i$, as is done in \cite[Section 3.1]{DGLT}.
    As such, by $(2)$ it follows that for every $\alpha> 0$ we have $\tau_n(\chi_{[\alpha,\lambda-\alpha]}(\pi_n(\Delta_S))) \to_n 0$. Using Proposition \ref{prop:spec_gap_pass_to_limit}, we see that $\pi_{\varphi}(\Delta_S)$ has $\lambda$-spectral gap.
    
    Let us now prove $(3) \implies (2)$.
    Fix $\lambda > 0$ which satisfies the condition in $(3)$, and assume by contradiction that $\Gamma$ does not satisfy $(2)$. 
    As such, there is an asymptotic representation $\pi_n: \Gamma \to U(M_n)$, for some $(M_n,\tau_n) \in \cQ$ and $0 < \alpha<\lambda$ such that $\liminf_{n \to \infty} \tau_n(\chi_{[\alpha,\lambda-\alpha]}(\pi_n(\Delta_{S}))) > 0 $.
    By compactness of the space of $k$-marked von Neumann algebras (see Section \ref{subsec:marked}),  there is a subsequence of the tuples $((\pi_n(s_1), \dots , \pi_n(s_k))_n$ that converge in moments to a tuple $(\pi_\varphi({s}_1), \dots, \pi_{\varphi}({s}_k))$, where $\varphi$ is a trace on the free group $\mathbb F_S$ and $\pi_\varphi: \F_S \to U(M_\varphi)$ is the GNS-construction.
    By replacing $\pi_n$ with this subsequence, and simply assume convergence holds.
    Now, since $\pi_n$ is an asymptotic representation,  it follows that in fact $\varphi \in \Tr{\Gamma} \subset \Tr{\F_S}$, so that $\pi_\varphi$ in fact factors to a $\Gamma$-representation $\pi_\varphi: \Gamma \to U(M_\varphi)$ . 
    Further, as in the proof of Theorem \ref{thm:vN_Schramm}, we see that $M_\varphi$ embeds in $\ultprod{n} M_n$, so $M_\varphi \in \cP$.
    Consequently, by the condition in $(3)$ we  have $\mu((0,\lambda)) = \tau(\chi_{(0,\lambda)}(\pi_\varphi(\Delta_S))) = 0$.
    Together with convergence and Proposition \ref{prop:spec_gap_pass_to_limit}, we see that $\lim_{n \to \infty} \tau_n(\chi_{[\alpha,\lambda-\alpha]}(\pi_n(\Delta_{S}))) = 0$, contradiction.
\end{proof}

As a consequence, we obtain the following.
\begin{corollary}\label{cor:eq defs of robust}
The following implications holds:
\begin{align*}
        (T)\Longrightarrow (T;\mathrm{W}^*)_{\mathrm{rob}}&\iff (T;\mathrm{W}^*)\Longrightarrow(T;\mathrm{CE})\\ &\iff (T;\mathrm{FD})_{\mathrm{rob}}
        \Longrightarrow (T;\mathrm{FD})\Longrightarrow (\tau).
\end{align*}
\end{corollary}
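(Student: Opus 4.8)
The plan is to read off every implication in the displayed chain from Proposition \ref{prop:eq defs of robust} together with two elementary observations. The first is \emph{anti-monotonicity}: if $\mathcal Q_1 \subseteq \mathcal Q_2$, then property $(T;\mathcal Q_2)$ implies property $(T;\mathcal Q_1)$, since the defining spectral-gap condition then quantifies over a larger family of representations. The second is that for any class $\mathcal Q$, the associated class $\mathcal P$ of Proposition \ref{prop:eq defs of robust} satisfies $\mathcal Q \subseteq \mathcal P$, because every member of $\mathcal Q$ embeds diagonally into its own constant tracial ultraproduct; hence $(T;\mathcal P)$ implies $(T;\mathcal Q)$, and combined with the equivalence $(1)\Leftrightarrow(3)$ of Proposition \ref{prop:eq defs of robust} this already yields $(T;\mathcal Q)_{\mathrm{rob}} \Rightarrow (T;\mathcal Q)$ for \emph{every} $\mathcal Q$; in particular the implication $(T;\mathrm{FD})_{\mathrm{rob}} \Rightarrow (T;\mathrm{FD})$.

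Next I would identify $\mathcal P$ in the two cases of interest. For $\mathcal Q = \mathrm{W}^*$: a tracial ultraproduct of tracial von Neumann algebras is again a tracial von Neumann algebra, so $\mathcal P \subseteq \mathrm{W}^*$, while the reverse inclusion is the diagonal embedding above; thus $\mathcal P = \mathrm{W}^*$, and Proposition \ref{prop:eq defs of robust} gives $(T;\mathrm{W}^*)_{\mathrm{rob}} \Leftrightarrow (T;\mathrm{W}^*)$. For $\mathcal Q = \mathrm{FD}$: by the description of Connes embeddability recalled in \S\ref{sec:preliminaries}, a tracial von Neumann algebra is Connes-embeddable precisely when it embeds into a tracial ultraproduct of the matrix algebras $M_n(\mathbb C)$, and since every finite-dimensional tracial von Neumann algebra embeds (approximately trace-preservingly, which is enough at the level of ultraproducts, via amplification) into some $M_n(\mathbb C)$, the class $\mathcal P$ attached to $\mathrm{FD}$ is exactly $\mathrm{CE}$. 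Hence Proposition \ref{prop:eq defs of robust} gives $(T;\mathrm{FD})_{\mathrm{rob}} \Leftrightarrow (T;\mathrm{CE})$. Finally, from $\mathrm{FD} \subseteq \mathrm{CE} \subseteq \mathrm{W}^*$, anti-monotonicity delivers the two plain implications $(T;\mathrm{W}^*) \Rightarrow (T;\mathrm{CE})$ and $(T;\mathrm{CE}) \Rightarrow (T;\mathrm{FD})$.

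It then remains to treat the two endpoints of the chain. For $(T) \Rightarrow (T;\mathrm{W}^*)$ — equivalently $(T) \Rightarrow (T;\mathrm{W}^*)_{\mathrm{rob}}$ by the equivalence just obtained — I would argue directly: given $\pi\colon \Gamma \to U(M)$ with $(M,\tau)$ tracial, left multiplication defines a genuine unitary representation of $\Gamma$ on $L^2(M,\tau)$ whose Laplacian is left multiplication by $\pi(\Delta_S)$; since $M$ acts faithfully on $L^2(M,\tau)$, the spectrum of that operator equals the spectrum of $\pi(\Delta_S)$ in $M$, so the uniform $\lambda$-spectral gap furnished by property (T) transfers verbatim to $\pi(\Delta_S)$. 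For $(T;\mathrm{FD}) \Rightarrow (\tau)$, one only needs to observe that the representations relevant to property $(\tau)$, namely the quasi-regular representations $\ell^2(\Gamma/H)$ for $H \leq \Gamma$ of finite index, are finite-dimensional (so that $\pi(\Gamma)''$ is finite-dimensional), whence the spectral gap required for $(\tau)$ is a special case of $(T;\mathrm{FD})$ with the same constant $\lambda$.

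The corollary is thus essentially bookkeeping on top of Proposition \ref{prop:eq defs of robust}; the only non-formal input is the identification $\mathcal P = \mathrm{CE}$ for $\mathcal Q = \mathrm{FD}$, which rests on the standard fact that tracial ultraproducts of finite-dimensional algebras and of matrix algebras produce the same class of embeddable tracial von Neumann algebras. The main point to guard against is orientation: property $(T;\mathcal Q)$ becomes stronger as $\mathcal Q$ grows, and $\mathcal Q \subseteq \mathcal P$ always, so the robust property always implies the plain one but not conversely in general — consistent with the strict failure of $(T;\mathrm{FD}) \Rightarrow (T;\mathrm{FD})_{\mathrm{rob}}$ recorded in Remark \ref{rem:JM}.
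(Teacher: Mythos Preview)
Your proposal is correct and matches the paper's own approach: the corollary is stated without proof as an immediate consequence of Proposition~\ref{prop:eq defs of robust}, and you have simply unpacked that consequence carefully---identifying $\mathcal P$ as $\mathrm{W}^*$ and $\mathrm{CE}$ in the two relevant cases, using anti-monotonicity for the one-way implications, and handling the endpoints $(T)\Rightarrow(T;\mathrm{W}^*)$ and $(T;\mathrm{FD})\Rightarrow(\tau)$ directly. The only slight divergence is that the paper also points to Theorem~\ref{thm:op_main} (Bader's argument via Ozawa's characterization) as an independent, effective route to $(T)\Rightarrow(T;\mathrm{W}^*)_{\mathrm{rob}}$, whereas you obtain it by first proving $(T)\Rightarrow(T;\mathrm{W}^*)$ via the standard representation on $L^2(M,\tau)$ and then invoking the equivalence; both are fine.
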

\begin{remark}\label{rem:JM}
    It is worthwhile to note that in general, property (T;FD$)_{\mathrm{rob}}$ is \emph{not} implied by property (T;FD). 
    Indeed, if $\Gamma$ is a finitely generated, infinite, simple amenable group (as in \cite{JM}), then it  satisfies property (T;FD) simply because it admits no non-trivial finite dimensional representations.
    However, being amenable, its regular representation generates a Connes embeddable algebra $L(\Gamma)$, while having almost invariant vectors in $L^2(\Gamma)$. This implies that $\Gamma$ does not have (T;FD$)_{\mathrm{rob}}$ by $(4)$ in Proposition \ref{prop:eq defs of robust}.
    It remains an interesting question to decide whether (T;FD$)_{\mathrm{rob}}$ and (T;FD) are different for \emph{residually finite} groups. \end{remark}

% Alon: I think is redundant, as it is not used
%   \begin{proposition}
%           If $\Gamma$ is flexibly Hilbert--Schmidt stable and has property (T;FD), then it has  property (T;FD)$_{\mathrm{rob}}$.
%   \end{proposition}
%   \begin{proof}
%       Since $\Gamma$ has property (T;FD), there is $\lambda>0$ such that any finite dimensional representation of $\Gamma$ has a $\lambda$-spectral gap. Let $\pi_n:\Gamma\to U(d_n)$ be an asymptotic representations. Since $\Gamma$ is flexibly Hilbert--Schmidt stable, there exist genuine finite dimensional representations $\rho_n:\Gamma\to U(D_n)$ with $D_n\geq d_n$, and projections $p_n\in \bM _{D_n}$ such that $\|\pi_n(g) - \mathrm {Ad}_{p_n}\rho_n(g)\|\to 0$ and $\frac{1}{D_n}\mathrm{tr}(p_n)=\frac{d_n}{D_n}\to 1$.  We consider the embedding $\bM_{d_n}\cong p_n \bM _{D_n} p_n\subset \bM _{D_n}$. We have 
%       \[
%           \|\pi_n(\Delta_S)-\rho_n (\Delta_S)\|\leq \|\pi_n (\Delta_S)-p_n \rho_n (\Delta_S) p_n\|+\|p_n \rho_n (\Delta_S) p_n-\rho_n(\Delta_n)\|\to 0 
%       \]
%       Since $\rho_n(\Delta_S)$ each has spectral gap $\lambda$, it follows that $\pi_n (\Delta_S)$ has assymptotically spectral gap $\lambda$. 
%   \end{proof}

\subsection{Property (T) and its robust variant}\label{appendix:Bader}

In this section we record a short proof, due to U. Bader, that approximate representations of Kazhdan groups with respect to the \emph{operator norm} possess a stronger form of approximate spectral gap than the one we discuss in the main body of the paper.
This generalizes the work of \cite{Manuilov2007OnAR}, which only worked under the assumption that Zuk's criterion holds.

\begin{definition}
Given $\delta > 0$ and $F \subset \Gamma$ finite, $\cH$ a Hilbert space, a map $\pi: \Gamma \to U(\mathcal H)$ is said to be an \emph{operator norm $(F, \epsilon)$-approximate representation} if
$$ \| \pi(gh) - \pi(g) \pi(h) \|_{\op} \leq \epsilon\; \text{ for all $g,h \in F$,}$$
and $\pi(g^{-1}) = \pi(g)^{-1}$ for all $g \in F$.
\end{definition}

With the connection to the topic of group stability being apparent, we remark that the measure of defect for the approximate representation is in \emph{operator norm}.

Let $\Gamma$ be a finitely generated group, and fix a symmetric finite generating set $S$ for $\Gamma$.
Given an $(F,\epsilon)$-representation $\pi$ of $\Gamma$, the linear extension $\tilde{\pi}: \mathbb R[\Gamma] \to B(\mathcal H)$ defined by $\tilde{\pi}(\sum_i a_i g_i) = \sum_i a_i \pi(g_i)$
is a linear $*$-preserving map such that for all $\xi, \eta \in \mathbb R[\Gamma]$ with $\supp(\eta), \supp(\xi) \subset F_\lambda$ we have
$$\| \tilde{\pi}(\xi)\tilde{\pi}(\eta) - \tilde{\pi}(\xi\eta) \|_{\op} \leq \epsilon \cdot \| \xi \|_1 \| \eta \|_1.$$
Indeed, linearity and $*$-preservation are immediate. For $\xi, \eta \in \mathbb R[\Gamma]$ supported on $F_\lambda$, write $\xi = \sum_{g \in F_\lambda} a_g g$ and $\eta = \sum_{g \in F_\lambda} b_g g$ , then by the triangular inequality:
\begin{equation*}
    \| \tilde{\pi}(\xi) \tilde{\pi}(\eta) - \tilde{\pi}(\xi  \eta) \|_{\op} \leq \sum_{g,h \in F_\lambda} |a_g b_h| \cdot \Vert \pi(g) \pi(h) - \pi(gh) \|_{\op} \leq \epsilon \| \xi \|_1 \| \eta \|_1. 
\end{equation*}

Consider the corresponding Laplacian $\Delta_S$ considered as an element of the real group ring $\mathbb R[\Gamma]$,

\begin{theorem}[Almost spectral gap for almost representations] \label{thm:op_main}
Let $\Gamma$ be a property (T) group. Let $S$ be a symmetric finite generating set for $\Gamma$, and let
$\kappa > 0$ be a Kazhdan constant for $\Gamma, S$, and let $\lambda = \kappa^2/(2|S|)$.
Then there exists $S \subset F \subset \Gamma$ finite and a constant $C$ such that for every $\epsilon > 0$ and any $(F,\epsilon)$-approximate
representation $\pi: \Gamma \to U(\mathcal H)$, where $\cH$ is a Hilbert space, we have 
$$ \sigma(\tilde{\pi}(\Delta_S)) \subset [0, C \epsilon] \cup [\lambda - C \epsilon, 2], $$
where $\tilde{\pi}(\Delta_S) = 1 - \frac{1}{|S|} \sum_{s \in S} \pi(s) \in B(\mathcal H)$ is the Laplacian associated with $\pi$.
\end{theorem}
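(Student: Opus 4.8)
The plan is to combine Ozawa's algebraic characterisation of property (T) with the approximate-multiplicativity estimate for the linear extension $\tilde\pi$ recorded just above the statement. By Ozawa's theorem, since $\kappa$ is a Kazhdan constant for $(\Gamma,S)$ the normalised Laplacian $\Delta_S$, viewed as an element of $\R[\Gamma]$, has spectral gap at least $\lambda=\kappa^2/(2|S|)$ in every unitary representation, and hence $\Delta_S^2-\lambda\,\Delta_S$ is a finite sum of hermitian squares: there exist $n\in\N$ and $\xi_1,\dots,\xi_n\in\R[\Gamma]$ with
\[
  \Delta_S^2-\lambda\,\Delta_S \;=\; \sum_{i=1}^{n}\xi_i^{*}\xi_i \qquad\text{in }\R[\Gamma].
\]
Fix such a decomposition once and for all. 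I would then take $F$ to be any finite \emph{symmetric} subset of $\Gamma$ containing $S\cup\{e\}$ and $\supp(\xi_i)$ for every $i$, and set $C:=\tfrac{4}{\lambda}\bigl(4+\sum_{i=1}^{n}\|\xi_i\|_1^2\bigr)$; both $F$ and $C$ depend only on $\Gamma,S,\kappa$, not on $\pi$ or $\epsilon$. The only genuinely hard input here is Ozawa's theorem, which converts an analytic spectral-gap property into a finite, purely algebraic identity in the real group ring; everything afterwards is bookkeeping.

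Next I would fix an $(F,\epsilon)$-approximate representation $\pi\colon\Gamma\to U(\mathcal H)$ and put $T=\tilde\pi(\Delta_S)=1-\tfrac1{|S|}\sum_{s\in S}\pi(s)$. Since $S$ is symmetric and $\pi(s^{-1})=\pi(s)^{*}$ for $s\in S$, the operator $\tfrac1{|S|}\sum_{s}\pi(s)$ is self-adjoint of operator norm $\le 1$, so $T$ is self-adjoint with $\sigma(T)\subset[0,2]$. Applying the linear, $*$-preserving map $\tilde\pi$ to the Ozawa identity and invoking the estimate $\|\tilde\pi(\xi)\tilde\pi(\eta)-\tilde\pi(\xi\eta)\|_{\op}\le\epsilon\|\xi\|_1\|\eta\|_1$ for $\xi,\eta$ supported in $F$ — applied with $(\xi,\eta)=(\xi_i^{*},\xi_i)$ and with $(\xi,\eta)=(\Delta_S,\Delta_S)$ — together with $\tilde\pi(\xi_i^{*})=\tilde\pi(\xi_i)^{*}$, one obtains
\[
  T^{2}-\lambda\,T \;=\; \sum_{i=1}^{n}\tilde\pi(\xi_i)^{*}\tilde\pi(\xi_i)\;+\;E,
  \qquad \|E\|_{\op}\le\epsilon\Bigl(\|\Delta_S\|_1^{2}+\sum_{i}\|\xi_i\|_1^{2}\Bigr)=\tfrac{\lambda C}{4}\,\epsilon,
\]
where we used $\|\Delta_S\|_1=2$. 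Since $\sum_i\tilde\pi(\xi_i)^{*}\tilde\pi(\xi_i)\ge 0$, this yields the operator inequality $T^{2}-\lambda\,T\ge-\tfrac{\lambda C}{4}\,\epsilon$.

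Finally I would read off the spectrum. By the spectral theorem for the self-adjoint operator $T$, $\sigma(T^2-\lambda T)=\{t^2-\lambda t:t\in\sigma(T)\}$, so every $t\in\sigma(T)$ satisfies $t^{2}-\lambda t+\tfrac{\lambda C}{4}\epsilon\ge 0$. If $C\epsilon\ge\lambda$ there is nothing to prove, since then $[\lambda-C\epsilon,2]\supset[0,2]\supset\sigma(T)$. Otherwise the quadratic $t^2-\lambda t+\tfrac{\lambda C}{4}\epsilon$ has two roots $t_{\pm}=\tfrac12\bigl(\lambda\pm\sqrt{\lambda^{2}-\lambda C\epsilon}\bigr)$ lying in $(0,\lambda)$, and $t\in[0,t_-]\cup[t_+,2]$. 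The elementary bounds $t_-=\tfrac{\lambda C\epsilon}{2(\lambda+\sqrt{\lambda^{2}-\lambda C\epsilon})}\le\tfrac{C}{2}\epsilon$ and $\sqrt{\lambda^{2}-\lambda C\epsilon}\ge\lambda-C\epsilon$ (whence $t_+\ge\lambda-\tfrac{C}{2}\epsilon$) then give $\sigma(T)\subset[0,C\epsilon]\cup[\lambda-C\epsilon,2]$, as claimed.

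The main obstacle, as noted, is really just having the Ozawa decomposition available with the stated $\lambda$; the remaining subtlety to keep track of is that $F$ and $C$ are extracted from the \emph{fixed} decomposition (in particular $F$ must be symmetric and large enough to contain the supports of all the $\xi_i$), so that the approximate-multiplicativity estimate applies to every product occurring in the computation and the constants are uniform over all approximate representations $\pi$ and all $\epsilon$. The final translation from an operator inequality to a spectral inclusion is a one-variable quadratic estimate.
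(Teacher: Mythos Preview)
Your proof is correct and follows essentially the same approach as the paper's: invoke Ozawa's sum-of-squares decomposition $\Delta_S^2-\lambda\Delta_S=\sum_i\xi_i^*\xi_i$, push it through the approximate-multiplicativity estimate for $\tilde\pi$, and read off the spectral inclusion from the resulting operator inequality via the spectral mapping theorem applied to $t\mapsto t^2-\lambda t$. Your bookkeeping is in fact slightly tidier than the paper's (you track the $\ell^1$-norms consistently, you explicitly make $F$ symmetric and contain $S\cup\{e\}$ so that the estimate applies to both $(\Delta_S,\Delta_S)$ and $(\xi_i^*,\xi_i)$, and you carry out the final quadratic root analysis in detail rather than sketching it).
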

\begin{proof}
    
Let $\Delta_S$ be the  Laplacian. By Ozawa's characterization of property (T) \cite{OzawaT}, there are group algebra elements $\xi_1,...,\xi_n\in \R[\Gamma]$ such that
$$ \Delta_S^2 - \lambda \Delta_S = \sum_{i=1}^n \xi_i^* \xi_i$$
We let $F = \bigcup_{i=1}^n \supp(\xi_i)$, and $M = \max \{ \| \xi_i \|_{\infty} \vert 1 \leq i \leq n \}$.
Note that $F \subset \Gamma$ is a finite subset.

Fix an $(F, \epsilon)$-approximate representation $\pi: \Gamma \to U(\mathcal H)$. Then 
$$\tilde{\pi}(\Delta_S^2 - \lambda\Delta_S) = \tilde{\pi}(\Delta_S^2) - \lambda \tilde{\pi}(\Delta_S) \approx_{M^2 \epsilon} \tilde{\pi}(\Delta_S)^2 - \lambda\tilde{\pi}(\Delta_S) = \tilde{\pi}(\Delta_S)^2 - \lambda \tilde{\pi}(\Delta_S).$$
where $a \approx_c b$ means that $\|a-b\|_{\op}\leq c$.
On the other hand:
$$ \tilde{\pi}(\Delta_S^2 - \lambda\Delta_S) = \tilde{\pi}(\sum_{i=1}^n \xi_i^* \xi_i) = \sum_{i=1}^n \tilde{\pi}(\xi_i^* \xi_i) \approx_{n M^2 \epsilon} \sum_{i=1}^n \tilde{\pi}(\xi_i)^* \tilde{\pi}(\xi_i)$$
As a result,  $\tilde{\pi}(\Delta_S)^2 - \lambda \tilde{\pi}(\Delta_S)$  is $C \epsilon$-close in operator norm to a sum of squares, where $C = (n+1)M^2$.

 We claim there is a constant $C'$ independent of $\pi$ such that:
    $$\sigma(\tilde{\pi}(\Delta_S)) \subset [0, C'\epsilon] \cup [\lambda - C'\epsilon, 2]$$
    Note $\frac{1}{|S|} \sum \pi(s)$ is a contraction, so the spectrum of $\tilde{\pi}(\Delta_S)$ is contained in $[0,2]$. As $\tilde{\pi}(\Delta_S)^2 - \lambda \tilde{\pi}(\Delta_S)$  is $C \epsilon$-close to a sum of squares, and sums of squares are non-negative operators, we have $\sigma(\tilde{\pi}(\Delta_S)^2 - \lambda \tilde{\pi}(\Delta_S)) \subset [-C\epsilon, \infty)$ .
    For the parabola $y(x) = x^2 - \lambda x$, we have a minimum at $\frac{\lambda}{2}$ with value $-\frac{\lambda^2}{4}$, and the roots are at $0, \lambda$. 
    Thus, by the spectral mapping theorem, for the spectrum inclusion above to hold we must have $\sigma(\tilde{\pi}(\Delta_S)) \subset [0, C' \epsilon] \cup [\lambda - C' \epsilon, 2]$, for $C' = \frac{2C}{\lambda}$. 
\end{proof}
\begin{remark}
 We note that this proof has the advantage of being effective. Indeed, it guarantees that $\epsilon$-almost representations have a ``$C\epsilon$-almost spectral gap''. One can also obtain a Hilbert–Schmidt version of this, thus making the implication
$(T)\;\Rightarrow\;(T;W^*)_{\mathrm{rob}}$ from Corollary \ref{cor:eq defs of robust}
 effective.
\end{remark}

\section{Relating Character rigidity and stability}\label{sec:char_rigidity}

In this section we  prove Theorem \ref{thm:main_char_rig_INTRO}. 
We recall the properties appearing in Theorem \ref{thm:main_char_rig_INTRO}:
\begin{enumerate}
    \item $\Gamma$ is hyperfinitely Hilbert--Schmidt stable
    \item $\Gamma$ is character rigid.
    \item $\Gamma$  has property~(T;FD)$_\mathrm{rob}$
    \item Every character of $\Gamma$ is a pointwise limit of finite dimensional traces.
\end{enumerate}

\begin{lemma}\label{lem:limits-of-fd implies (T;tr)}
    Let $\Gamma$ be a finitely generated group with property (T;FD). If every trace of $\Gamma$ is a limit of finite dimensional traces, then $\Gamma$ has (T;$\mathrm{W}^*$).
\end{lemma}
\begin{proof}
As in the proof of Proposition    \ref{prop:eq defs of robust}, this follows immediately from Proposition \ref{prop:spec_gap_pass_to_limit}.
%   The argument is basically the same as given in the proof of Proposition    \ref{prop:eq defs of robust}.
%   Fix some finite generating set $S$. Let $\pi$ be a tracial representation, and let $\varphi$ be the corresponding trace. By assumption $\varphi =\lim_n \frac{1}{\dim\pi_n}\mathrm{tr}{\circ}\pi_n$ for some finite dimensional unitary representations $\pi_n$ of $\Gamma$. Let $\mu$ (resp. $\mu_n$) be the spectral measure corresponding to the positive operator $\pi(\Delta_S)$ (resp. $\pi_n(\Delta_S)$. By Lemma \ref{lem:convergence of spectral measures}, $\mu_n \to \mu$ in the weak-* topology. By property (T;FD), there exists $\lambda>0$ such that $\mu_n(I_\lambda)=0$ for all $n$, where  $I=(0,\lambda)$. 
%       Thus $\mu(I_\lambda)=0$ (by Portmanteau theorem). It follows that  $\pi(\Delta_S)$ has $\lambda$-spectral gap. This shows that $\Gamma$ has property (T;$\mathrm{W}^*$).
\end{proof}

\begin{lemma}\label{lem:T CE implies rigidity}
  Let  $\Gamma$ be a finitely generated group satisfying property  $(T;\mathrm{CE})$. Then any von Neumann amenable character is finite dimensional.
\end{lemma}

\begin{proof}
    Let $\varphi$ be a von Neumann amenable character of $\Gamma$, and let $(M,\tau,\pi)$ be the corresponding tracial representation. Since $M$ is amenable, the representation $\pi\otimes \bar{\pi}$ has almost invariant vectors \cite[Theorem 10.2.9]{Popa}.
    % there exists a sequence of Hilbert-Schmidt operators $T_n\in \mathcal{HS}(L^2(M))$ with Hilbert-Schmidt norm $1$, satisfying 
    % \begin{equation}    \label{eq:T_n}
    %     \|\pi(\gamma)T_n\pi(\gamma)^*-T_n\|_{\mathrm{HS}}\to 0,\qquad \text{for all}\quad \gamma\in \Gamma.
    % \end{equation}
    % Recall  that the represenation $\pi\otimes\bar{\pi}:\Gamma\to L^2(M)\otimes \overline{L^2(M)}$ is canonically equivalent to the representation $\Gamma\to U(\mathcal{HS}(L^2(M))$ given by $X\mapsto \pi(\gamma)X\pi(\gamma)^*$. Hence (\ref{eq:T_n}) implies that $\pi\otimes\bar{\pi}$ has almost-invariant vectors. 
    In particular,  $0$ is in the spectrum of $(\pi\otimes\bar\pi)(\Delta_S)$. 

    On the other hand, as $M$ is amenable, it is in particular Connes embeddable. One easily verifies that $M\otimes \bar{M}$, as well as any von Neumann subalgebra of it, is Connes embeddable as well. This in particular applies to the von Neumann algebra $(\pi\otimes\bar{\pi})(\Gamma)''\subset M\otimes \bar{M}$. Thus by (T;$\mathrm{CE}$), $(\pi\otimes \bar{\pi})(\Delta_S)$ has a spectral gap. It follows that $0$ is an eigenvalue of  $(\pi\otimes \bar{\pi})(\Delta_S)$. Now, any corresponding eigenvector is easily seen to be fixed by $\pi\otimes\bar{\pi}(s)$ for each $s\in S$. Since $S$ generates $\Gamma$, we see that $\pi\otimes\bar{\pi}$ has an invariant vector. It follows that $\pi$ contains a finite dimensional subrepresentation \cite[Proposition A.1.12]{Bekka}.  Since $\varphi$ is a character, $\pi$ is factorial, so it must be that $\pi$ is finite dimensional. Hence $\varphi$ is finite dimensional. 
\end{proof}

\begin{proposition}\label{prop:criterion-for-main-thm}
    Let $\Gamma$ be a finitely generated group such that:
    \begin{itemize}
        \item $\Gamma$ has property (T;FD).
        \item $\Gamma$ is residually finite\footnote{This condition can be replaced with $\Gamma$ being maximally almost periodic.}
        \item Every character of $\Gamma$ is either von Neumann amenable, or it is the Dirac trace $\delta_e$. 
    \end{itemize}
   Then the four conditions in Theorem \ref{thm:main_char_rig_INTRO} are equivalent for $\Gamma$.
\end{proposition}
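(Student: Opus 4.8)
The plan is to establish the four implications $(2)\Rightarrow(4)$, $(4)\Rightarrow(3)$, $(3)\Rightarrow(2)$ together with the equivalence $(1)\Leftrightarrow(4)$; these close the circle. The preliminary fact driving everything is that, since $\Gamma$ is residually finite, the Dirac trace $\delta_e$ is itself a pointwise limit of finite-dimensional traces: I would pick a decreasing chain of finite-index normal subgroups $N_n\trianglelefteq\Gamma$ with $\bigcap_n N_n=\{e\}$ and let $\lambda_n$ be the pullback to $\Gamma$ of the regular representation of $\Gamma/N_n$, so that $\tfrac1{[\Gamma:N_n]}\tr\lambda_n(g)=\mathbf 1[g\in N_n]\to\delta_e(g)$ for every $g\in\Gamma$. (For the maximally almost periodic variant mentioned in the footnote one argues similarly, using a separating family of finite-dimensional representations and passing to tensor powers so that normalized characters of non-scalar elements decay.) Granting this, the hypothesis that every character of $\Gamma$ is either von Neumann amenable or equal to $\delta_e$ shows that \emph{every character of $\Gamma$ is a limit of finite-dimensional traces if and only if every von Neumann amenable character is}; by Theorem~\ref{thm:HS_for_non_amenable_grps} the latter condition is exactly $(1)$, which gives $(1)\Leftrightarrow(4)$. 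The implication $(2)\Rightarrow(4)$ is then immediate, since a finite-dimensional trace is trivially a limit of finite-dimensional traces and $\delta_e$ is one by the above.

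For $(3)\Rightarrow(2)$ I would use Corollary~\ref{cor:eq defs of robust}, by which property~(T;FD)$_{\mathrm{rob}}$ is equivalent to property $(T;\mathrm{CE})$, and then Lemma~\ref{lem:T CE implies rigidity}, which says that every von Neumann amenable character of a group with $(T;\mathrm{CE})$ is finite-dimensional. Combining this with the hypothesis that the only non--von Neumann amenable character is $\delta_e$, we conclude that every character is finite-dimensional or equal to $\delta_e$, i.e.\ $\Gamma$ is character rigid, which is $(2)$.

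The substantive step is $(4)\Rightarrow(3)$, and its first task is to upgrade $(4)$ from characters to arbitrary traces. Let $K\subseteq\Tr\Gamma$ be the set of pointwise limits of finite-dimensional traces; since $\Tr\Gamma$ is compact metrizable, $K$ is closed, and by $(4)$ it contains $\Ch{\Gamma}$. I claim $K$ is convex: for $\varphi_i=\tfrac1{d_i}\tr\pi_i$ and a rational weight $\lambda=p/N$ one has $\lambda\varphi_1+(1-\lambda)\varphi_2=\tfrac1{D}\tr\bigl(\pi_1^{\oplus pd_2}\oplus\pi_2^{\oplus(N-p)d_1}\bigr)$ with $D=Nd_1d_2$, so such combinations lie in $K$; arbitrary real weights are obtained by approximating $\lambda$ by rationals, and arbitrary $\varphi,\psi\in K$ by approximating them by finite-dimensional traces and passing to the limit, using that $K$ is closed (the general finite convex combination follows by induction). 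Since $\Tr\Gamma$ is the closed convex hull of its extreme points by Krein--Milman, a closed convex set containing $\Ch{\Gamma}$ equals all of $\Tr\Gamma$; hence every trace of $\Gamma$ is a pointwise limit of finite-dimensional traces. Now Lemma~\ref{lem:limits-of-fd implies (T;tr)} (applicable as $\Gamma$ has property (T;FD)) yields property $(T;\mathrm{W}^*)$, and Corollary~\ref{cor:eq defs of robust} then gives $(T;\mathrm{CE})$ and hence property~(T;FD)$_{\mathrm{rob}}$, which is $(3)$. This closes the circle $(1)\Leftrightarrow(4)\Rightarrow(3)\Rightarrow(2)\Rightarrow(4)$.

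I expect the main obstacle to be precisely this passage in $(4)\Rightarrow(3)$ from ``every character is a limit of finite-dimensional traces'' to ``every trace is'': one needs the set of limits of finite-dimensional traces to be closed and convex, which forces the explicit direct-sum/rational-multiplicity computation above, since a generic convex combination of finite-dimensional traces is not itself a finite-dimensional trace. Once that is in hand, the remaining implications are bookkeeping with Theorem~\ref{thm:HS_for_non_amenable_grps}, Lemma~\ref{lem:T CE implies rigidity}, Lemma~\ref{lem:limits-of-fd implies (T;tr)} and the dictionary of Corollary~\ref{cor:eq defs of robust}, the only genuinely new input being the elementary residual-finiteness observation that $\delta_e$ is approximable by finite-dimensional traces.
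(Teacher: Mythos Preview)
Your proposal is correct and follows essentially the same route as the paper: both reduce to the reformulations $(1)\Leftrightarrow(1')$ via Theorem~\ref{thm:HS_for_non_amenable_grps} and $(3)\Leftrightarrow(T;\mathrm{CE})$ via Corollary~\ref{cor:eq defs of robust}, both use residual finiteness to approximate $\delta_e$ by quasi-regular traces, and both invoke Lemma~\ref{lem:limits-of-fd implies (T;tr)} and Lemma~\ref{lem:T CE implies rigidity} for the substantive steps. The paper organizes the cycle as $(1')\Rightarrow(4)\Rightarrow(3')\Rightarrow(2)\Rightarrow(1')$ and simply remarks that the passage from characters to arbitrary traces in $(4)$ ``is not hard to see using Krein--Milman''; you spell this out explicitly with the direct-sum/rational-multiplicity construction, which is a welcome addition rather than a different idea.
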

    
\begin{proof}
    By Theorem \ref{thm:HS_for_non_amenable_grps}, $(1)$ is equivalent to
    
    $(1')$: any von Neumann amenable character is a limit of finite dimensional traces. 
    
    Moreover, it is not hard to see using Krein-Milman that  $(1')$ (resp. $(4)$) is equivalent to  every von Neumann amenable trace (resp. every trace) being a limit of finite dimensional traces. 
    
    In addition, by Proposition \ref{cor:eq defs of robust}, $(3)$ is equivalent to 
    
    $(3')$: $\Gamma$ satisfies (T;$\mathrm{CE}$).
    
    We will show that $(1')\Rightarrow(4)\Rightarrow(3')\Rightarrow(2)\Rightarrow(1')$.

    $(1')\Rightarrow (4)$: Let $H_n$ be a sequence of finite index normal subgroups of $\Gamma$ with trivial intersection. The traces corresponding to the quasiregular representations $\pi_n:\Gamma\to U(\ell^2(\Gamma/H_n))$ are the characteristic functions $1_{H_n}$, which clearly converge to $\delta_e$. Any character of $\Gamma$ other than $\delta_e$ is assumed to be von Neumann amenable, and so by $(1')$, it is also a limit of finite dimensional traces. 
    
    $(4)\Rightarrow (3')$: By Lemma \ref{lem:limits-of-fd implies (T;tr)}, $\Gamma$ has property (T;$\mathrm{W}^*$), in particular, it has property (T;$\mathrm{CE}$). 

    $(3')\Rightarrow (2)$: Any von Neumann amenable character of $\Gamma$ is finite dimensional by Lemma \ref{lem:T CE implies rigidity}, hence $\Gamma$ is character-rigid. 

    $(2)\Rightarrow(1')$: This is immediate. 
\end{proof}

Among the three assumptions in Proposition \ref{prop:criterion-for-main-thm} , it is arguably the third one which is most special. 
This condition holds for higher rank S-arithmetic groups in arbitrary characteristic, by the comprehensive work \cite{BBHP,BBH}. Specializing to lattices in semisimple Lie group,  Theorem \ref{thm:main_char_rig_INTRO} from the introduction follows at last.

\begin{proof}[Proof of Theorem \ref{thm:main_char_rig_INTRO}]
We first note that  that $\Gamma$ is finitely generated \cite[Remark 6.18]{raghunathan1972discrete} and thus residually finite by Mal'cev's theorem.
 By  Margulis's arithmeticity theorem, there exists a number field $F$, a semisimple $F$-algebraic group $\bH$, and a continuous surjective homomorphism $p:\bH(\R)\to G$ with compact kernel,  such that $\Gamma$ is commensurable with $p(\bG(\mathcal{O}_F))$ \cite[Chapter XI, Theorem (6.5)]{margulis1991discrete}. By \cite[Theorem B]{BBH},  any character of $\bG(\mathcal{O}_F)$ is either von Neumann amenable or supported on the amenable radical of $\bG(\mathcal{O}_F)$.   

Now, any character $\varphi$ of $\Gamma$ naturally lifts to a character $\tilde{\varphi}$ of $\bG(\mathcal{O}_F)$. It is straightforward to check that  $\tilde{\varphi}$ is von Neumann amenable (resp. supported on the amenable radical) if and only if $\varphi$ satisfies that same respective property. But, as $G$ is center-free and without compact factors, it follows that the amenable radical of $\Gamma$ is trivial  \cite[Lemma 4.8]{BV}. The four conditions are therefore equivalent  for $\Gamma$, by Proposition \ref{prop:criterion-for-main-thm}
\end{proof}

\begin{remark}
    An analogous statement to Theorem \ref{thm:main_char_rig_INTRO} can be made for Invariant random subgroup rigidity and stability with respect to permutations and proven similarly. 
    We choose to not include it in the paper for brevity. 
\end{remark}

\bibliographystyle{alpha}
\bibliography{ref}

\vspace{0.5cm}

\noindent{\textsc{Department of Mathematics, Weizmann Institute of Science, Israel}}

\noindent{\textit{Email address:} \texttt{alon.dogon@weizmann.ac.il}} \\

\noindent{\textsc{Department of Mathematics, University of California San Diego, USA}}

\noindent{\textit{Email address:} \texttt{ivigdorovich@ucsd.edu}} \\

\end{document}